\newcommand{\Hom}{\textnormal{Hom}}
\newcommand{\1}{\textbf{1}}
\DeclareMathOperator{\Vep}{Vec}
\DeclareMathOperator{\Tr}{Tr}
\DeclareMathOperator{\FP}{FPdim}
\theoremstyle{plain}
\newtheorem{theo}{Theorem}
\newtheorem{mytheorem}{Proposition}[subsection]
\newtheorem{mylemma}[mytheorem]{Lemma}
\newtheorem{mycorr}[mytheorem]{Corollary}
\theoremstyle{definition}
\newtheorem{example}[mytheorem]{Example}
\newtheorem*{note}{Note}
\newtheorem{defi}[mytheorem]{Definition}
\newtheorem*{theorem}{Theorem}
\title{Classification of $\mathfrak{sl}_3$ relations in the Witt group of nondegenerate braided fusion categories}
\date{\today}
\author{Andrew Schopieray\\ Mathematics Department, University of Oregon}
\begin{document}
\parskip = \baselineskip
\setlength{\parindent}{0cm}

\maketitle

\begin{abstract}
\noindent The Witt group of nondegenerate braided fusion categories $\mathcal{W}$ contains a subgroup $\mathcal{W}_\text{un}$ consisting of Witt equivalence classes of pseudo-unitary nondegenerate braided fusion categories.  For each finite-dimensional simple Lie algebra $\mathfrak{g}$ and positive integer $k$ there exists a pseudo-unitary category $\mathcal{C}(\mathfrak{g},k)$ consisting of highest weight integerable $\hat{g}$-modules of level $k$ where $\hat{\mathfrak{g}}$ is the corresponding affine Lie algebra.  Relations between the classes $[\mathcal{C}(\mathfrak{sl}_2,k)]$, $k\geq1$ have been completely described in the work of  Davydov, Nikshych, and Ostrik.  Here we give a complete classification of relations between the classes $[\mathcal{C}(\mathfrak{sl}_3,k)]$, $k\geq1$ with a view toward extending these methods to arbitrary simple finite dimensional Lie algebras $\mathfrak{g}$ and positive integer levels $k$.
\end{abstract}

\section{Introduction}

\par The Witt group of non-degenerate braided fusion categories $\mathcal{W}$ first introduced in \cite{DMNO} provides an algebraic structure that is one tool for organizing braided fusion categories.  Inside $\mathcal{W}$ lies the subgroup $\mathcal{W}_\text{un}$ consisting of classes of pseudo-unitary braided fusion categories which, in turn, contains the classes $[\mathcal{C}(\mathfrak{g},k)]$ coming from the representation theory of affine Lie algebras.  Theorem \ref{second} is the main goal of this paper, to classify all relations in the Witt group between the classes $[\mathcal{C}(\mathfrak{sl}_3,k)]$.  To do so requires identification of a unique (up to braided equivalence) representative of each Witt equivalence class which is simple and completely anisotropic (see Definitions \ref{def:simple} and \ref{def:anisotropic}), constructed in the cases where $3\mid k$ as the category of dyslectic $A$-modules $\mathcal{C}(\mathfrak{sl}_3,k)_A^0$ \cite{KiO}.  The major result which allows for the classification is Theorem 1 which states that the categories $\mathcal{C}(\mathfrak{sl}_3,k)_A^0$ are simple when $3\mid k$ and $k\neq3$.

\par Translated into the language of modular tensor categories, there is a common belief among physicists \cite{MS} that $\mathcal{W}_\text{un}$ is generated by the classes of the categories $\mathcal{C}(\mathfrak{g},k)$.  This provides at least one external motivation for understanding Witt group relations in $\mathcal{W}_\text{un}$.  But Witt group relations are difficult to come by; all relations in the subgroup $\mathcal{W}_\text{pt}\subset\mathcal{W}$ consisting of pointed braided fusion categories are known \cite{DGNO} and limited relations in $\mathcal{W}_\text{un}$ are known due to the theory of conformal embeddings of vertex operator algebras (Section \ref{sec:modcon}).  The general task of classifying all relations in $\mathcal{W}_\text{un}$ was presented in \cite{DMNO}, and in \cite{DNO} all relations among the classes of the categories $\mathcal{C}(\mathfrak{sl}_2,k)$ were classified.  Independent from the classification of Witt group relations, the passage from $\mathcal{C}\in\mathcal{W}_\text{un}$ to its category of dyslectic $A$-modules over a connected \'etale algebra has external connection to the process of anyon condensation in the physics literature, described for example in \cite[Chapter 6]{sebas} and \cite{kong}, providing stronger justification to conjecture and prove results similar to Theorem \ref{second} for general $\mathcal{C}(\mathfrak{g},k)$.  If these results are true they also provide an infinite collection of simple modular tensor categories which play an important role in the classification of all modular tensor categories, an open and active area of modern research.

\par The materials of this paper are organized as follows:  Section \ref{sec:prelims} contains a summary of the basic tools of modular tensor categories comparable to that found in \cite[Chapter 8]{tcat} or \cite[Chapters 1-3]{BaKi} as well an introduction to the Witt group of nondegenerate braided fusion categories $\mathcal{W}$.  Sections \ref{sec:classical} and \ref{sec:moddata} then give an introduction to the modular tensor categories $\mathcal{C}(\mathfrak{sl}_3,k)$, the main objects of our study, followed by detailed descriptions through the end of the section of the aforementioned simple, completely anisotropic building blocks that will be used for the classification.  Section \ref{sec:modcon} discusses the classification of symmetric $\mathfrak{sl}_3$ modular invariants due to Gannon \cite{gannon} and all relations coming from conformal embeddings of vertex operator algebras, leading up to the classification of Witt group relations in Section \ref{sec:class}.  We end with a brief discussion on how the methods presented in this paper may be generalized to a classification of all Witt group relations coming from $\mathcal{C}(\mathfrak{g},k)$ and the major obstructions to this goal.


\section{Preliminaries}\label{sec:prelims}

\subsection{Nondegenerate braided fusion categories}\label{firstone}

\par We assume familiarity with the basic definitions and results found for example in \cite{tcat}, but will give a brief recollection at this point.  In the remainder of this section $\mathbbm{k}$ will be an algebraically closed field of characteristic zero.

\par A \emph{fusion category} over $\mathbbm{k}$ is a $\mathbbm{k}$-linear semisimple rigid tensor category with finitely many simple objects, finite dimensional spaces of morphisms, and a simple unit object $\1$.  For brevity the set of simple objects of a fusion category $\mathcal{C}$ will be denoted $\mathcal{O}(\mathcal{C})$.  We will identify the unique (up to tensor equivalence) fusion category with one simple object with $\Vep$, the category of finite dimensional vector spaces over $\mathbbm{k}$.  Given two braided fusion categories $\mathcal{C}$ and $\mathcal{D}$, \emph{Deligne's tensor product} $\mathcal{C}\boxtimes\mathcal{D}$ is a new braided fusion category which can be realized as the completion of the $\mathbbm{k}$-linear direct product $\mathcal{C}\otimes_\mathbbm{k}\mathcal{D}$ under direct sums and subobjects under our current assumptions \cite[Section 4.6]{tcat}.

\par A set of natural isomorphisms
\begin{equation}\label{braiding}
c_{X,Y}:X\otimes Y\stackrel{\sim}{\longrightarrow}Y\otimes X
\end{equation}
satisfying compatibility relations \cite[Section 8.1]{tcat} for all $X,Y$ in a fusion category $\mathcal{C}$ is called a \emph{braiding} on $\mathcal{C}$ and we will therefore refer to $\mathcal{C}$ as a \emph{braided fusion category}.  There is an alternative \emph{reverse braiding} for any braided category given by $\tilde{c}_{X,Y}=c_{Y,X}^{-1}$ and the resulting braided category is denoted $\mathcal{C}^\text{rev}$.

\begin{example}[Pointed fusion categories]\label{pointed}  Special distinction goes to fusion categories $\mathcal{C}$ in which every object $X\in\mathcal{O}(\mathcal{C})$ is \emph{invertible}, i.e.\ the \emph{evaluation} $\text{ev}_X:X^\ast\otimes X\longrightarrow \1$ and \emph{coevaluation} $\text{coev}_X:\1\longrightarrow X\otimes X^\ast$ maps coming from the rigidity of $\mathcal{C}$ are isomorphisms.  Categories in which every $X\in\mathcal{O}(\mathcal{C})$ is invertible are called \emph{pointed}, while the maximal pointed subcategory of a braided fusion category $\mathcal{C}$ will be denoted $\mathcal{C}_\text{pt}$.  Pointed braided fusion categories were classified by Joyal and Street in \cite[Section 3]{joyalstreet} (see also \cite[Section 8.4]{tcat}).  If a pointed fusion category is braided, due to (\ref{braiding}) the set of simple objects forms a finite abelian group under the tensor product, which we will call $A$.   Recall that a quadratic form on $A$ with values in $\mathbbm{k}^\times$ is a function $q:A\to\mathbbm{k}^\times$ such that $q(-x)=q(x)$ and $b(x,y)=q(x+y)/(q(x)q(y))$ is bilinear for all $x,y\in A$.  To each pair $(A,q)$ there exists a braided fusion category $\mathcal{C}(A,q)$ that is unique up to braided equivalence whose simple objects are labelled by the elements of $A$.
\end{example}

\par One might identify symmetric braidings (those for which $c_{Y,X}\circ c_{X,Y}=\text{id}_{X\otimes Y}$) as the most elementary of braidings as Deligne \cite{deligne1}\cite{deligne2}\cite[Section 9.9]{tcat} proved that all symmetric fusion categories must come from the representation theory of finite groups.  In the spirit of gauging how far a braiding is from being symmetric, if $c_{Y,X}\circ c_{X,Y}=\text{id}_{X\otimes Y}$ for any objects $X,Y\in\mathcal{C}$, we say $X$ and $Y$ \emph{centralize} one another \cite[Section 2.2]{mug1}.

\begin{defi}\label{def:centralizer}
If $\mathcal{D}$ is a subcategory of braided fusion category $\mathcal{C}$ that is closed under tensor products then $\mathcal{D}'\subset\mathcal{C}$ the \emph{centralizer of $\mathcal{D}$ in $\mathcal{C}$} is the full subcategory of objects of $\mathcal{C}$ that centralize each object of $\mathcal{D}$. A braided fusion category is known as \emph{nondegenerate} if $\mathcal{C}'\simeq\Vep$.
\end{defi}

\begin{note}One can think of nondegenerate braided fusion categories as those which are furthest from symmetric as possible.\end{note}

\begin{example}[Metric groups]\label{metric}
If the symmetric bilinear form $b(-,-)$ associated with a pair $(A,q)$ (as in Example \ref{pointed}) is nondegenerate, then the pair $(A,q)$ is called a \emph{metric group}.  It is known \cite[Example 8.13.5]{tcat} that the category $\mathcal{C}(A,q)$ is nondegenerate if and only if $(A,q)$ is a metric group.  For instance let $A:=\mathbb{Z}/3\mathbb{Z}$ (considered as the set $\{0,1,2\}$ with the operation of addition modulo 3).  The following functions are quadratic forms on $A$ with values in $\mathbb{C}^\times$:
\begin{align*}
q_{\omega}:&\,A\longrightarrow\mathbb{C}^\times& q_{\omega^2}:&\,A\longrightarrow\mathbb{C}^\times \\
&x\mapsto(\omega)^{x^2}& &x\mapsto(\omega^2)^{x^2} \\
\end{align*}
where $\omega=\exp(2\pi i/3)$.  These quadratic forms equip $\mathcal{C}(\mathbb{Z}/3\mathbb{Z},q_\omega)$ and $\mathcal{C}(\mathbb{Z}/3\mathbb{Z},q_{\omega^2})$ with the structure of nondegenerate braided fusion categories which are not braided equivalent.
\end{example}


\subsection{Fusion subcategories and prime decomposition}\label{subcategories}

\par The assumptions required of a fusion subcategory are very few in number.

\begin{defi}
A full subcategory $\mathcal{D}$ of a fusion category $\mathcal{C}$ is a \emph{fusion subcategory} if $\mathcal{D}$ is closed under tensor products.
\end{defi}

\par It would not be unreasonable to assume that rigidity and existence of the unit object of $\mathcal{D}$ be required in the definition above, but both are consequences of closure under tensor products.  Specifically Lemma 4.11.3 of \cite{tcat} gives that for each simple object $X$ there exists $n\in\mathbb{Z}_{>0}$ such that $\Hom(\1,X^n)\neq0$.  And by adjointness of duality \cite[Proposition 2.10.8]{tcat} $\Hom(X^\ast,X^{n-1})\neq0$ as well.  Thus $\1,X^\ast\in\mathcal{C}$ are direct summands of sufficiently large powers of $X$.
\begin{defi}\label{def:simple}
A fusion category with no proper, nontrivial fusion subcategories is called \emph{simple}, while a nondegenerate fusion category with no proper, nontrivial, nondegenerate fusion subcategories is called \emph{prime}.
\end{defi}
The existence of a decomposition of a nondegenerate braided fusion category into a product of prime fusion subcategories was given by M\"uger \cite[Section 4.1]{mug1} under limited assumptions and proved in the following generality in Theorem 3.13 of \cite{DGNO}.

\begin{mytheorem}\label{primedecomp}
Let $\mathcal{C}\neq\Vep$ be a nondegenerate braided fusion category.  Then
\[\mathcal{C}\simeq\mathcal{C}_1\boxtimes\cdots\boxtimes\mathcal{C}_n,\]
where $\mathcal{C}_1,\ldots,\mathcal{C}_n$ are prime nondegenerate subcategories of $\mathcal{C}$.
\end{mytheorem}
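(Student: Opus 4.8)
The plan is to prove the prime decomposition by induction on the number of simple objects of $\mathcal{C}$, using the structure theory of nondegenerate braided fusion categories. If $\mathcal{C}$ is itself prime, there is nothing to prove, so suppose $\mathcal{C}$ contains a proper, nontrivial, nondegenerate fusion subcategory $\mathcal{D}$. The key structural fact I would invoke (a theorem of M\"uger, see \cite[Theorem 4.2]{mug1} or \cite[Theorem 3.10]{DGNO}) is that for a nondegenerate braided fusion category $\mathcal{C}$ and a nondegenerate fusion subcategory $\mathcal{D}\subseteq\mathcal{C}$, the centralizer $\mathcal{D}'$ is also nondegenerate and one has a braided equivalence $\mathcal{C}\simeq\mathcal{D}\boxtimes\mathcal{D}'$. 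This is where the nondegeneracy hypothesis does all the work: it guarantees the ambient category splits as a Deligne product of the chosen nondegenerate subcategory and its centralizer.

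First I would establish that lemma (or cite it), since it is the linchpin. The argument for it runs roughly as follows: $\mathcal{D}\cap\mathcal{D}'$ is the M\"uger center of $\mathcal{D}$, which is trivial because $\mathcal{D}$ is nondegenerate; the multiplication functor $\mathcal{D}\boxtimes\mathcal{D}'\to\mathcal{C}$ is then fully faithful (using that objects of $\mathcal{D}$ and $\mathcal{D}'$ centralize each other and that a simple object of the product maps to a simple object), and a dimension count via $\FP(\mathcal{D})\FP(\mathcal{D}')=\FP(\mathcal{C})$ — which itself follows from nondegeneracy of $\mathcal{C}$ forcing $\FP(\mathcal{D}')=\FP(\mathcal{C})/\FP(\mathcal{D})$ — shows it is essentially surjective, hence a braided equivalence.

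With the splitting lemma in hand, the induction is routine: both $\mathcal{D}$ and $\mathcal{D}'$ are nondegenerate braided fusion categories with strictly fewer simple objects than $\mathcal{C}$ (since both are proper and nontrivial, each has at least two simple objects, so each has strictly fewer than $\mathcal{C}$, whose number of simples is the product $|\mathcal{O}(\mathcal{D})|\cdot|\mathcal{O}(\mathcal{D}')|$). Applying the inductive hypothesis to each yields $\mathcal{D}\simeq\mathcal{D}_1\boxtimes\cdots\boxtimes\mathcal{D}_r$ and $\mathcal{D}'\simeq\mathcal{D}_{r+1}\boxtimes\cdots\boxtimes\mathcal{D}_n$ with all factors prime, and concatenating gives the desired decomposition of $\mathcal{C}$, where one uses associativity of $\boxtimes$ and the fact that a prime subcategory of $\mathcal{D}$ (resp.\ $\mathcal{D}'$) remains a prime subcategory of $\mathcal{C}$ after the identification $\mathcal{C}\simeq\mathcal{D}\boxtimes\mathcal{D}'$.

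The main obstacle is the splitting lemma $\mathcal{C}\simeq\mathcal{D}\boxtimes\mathcal{D}'$, and within it the nontrivial input is M\"uger's double centralizer theorem, $\mathcal{D}''=\mathcal{D}$, together with the multiplicativity $\FP(\mathcal{D})\FP(\mathcal{D}')=\FP(\mathcal{C})$ for nondegenerate $\mathcal{C}$; both rely on the nondegeneracy of the braiding in an essential way and are exactly the places where the general case (as opposed to M\"uger's original modular setting) requires the more careful arguments of \cite{DGNO}. Everything after that — the induction, the bookkeeping of which factors came from $\mathcal{D}$ versus $\mathcal{D}'$, and the verification that the factors are prime as subcategories of $\mathcal{C}$ — is formal. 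I would also remark that the decomposition is not claimed to be unique here; uniqueness is a separate (and more delicate) matter not asserted in the statement.
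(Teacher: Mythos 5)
Your argument is essentially the same as the paper's: the paper cites this result (Theorem 3.13 of \cite{DGNO}, originally M\"uger) and notes that the decomposition is obtained by repeatedly applying M\"uger's splitting $\mathcal{C}\simeq\mathcal{D}\boxtimes\mathcal{D}'$ for a nondegenerate subcategory $\mathcal{D}$, which is exactly the key lemma you invoke and then iterate by induction on the number of simple objects. Your sketch of the splitting lemma and the bookkeeping of the induction are correct, so there is nothing to add.
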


\par To construct such a decomposition one can identify a nontrivial nondegenerate braided fusion subcategory $\mathcal{D}$ inside of a given nondegenerate braided fusion category $\mathcal{C}$ and by Theorem 4.2 of \cite{mug1}, $\mathcal{C}\simeq\mathcal{D}\boxtimes\mathcal{D}'$ is a braided equivalence.  In future sections this process will be referred to as \emph{M\"uger's decomposition}.


\subsection{Modular Categories}\label{modular}

\par Recall the natural isomorphisms $a_V:V\stackrel{\sim}{\longrightarrow} V^{\ast\ast}$ for any finite dimensional vector space $V$ over $\mathbbm{k}$ from elementary linear algebra.  This collection of natural isomorphisms is a \emph{pivotal structure} on $\Vep$, i.e. they satisfy $a_{V\otimes W}=a_V\otimes a_W$ for any finite dimensional vector spaces $V$ and $W$.  A pivotal structure on a general tensor category $\mathcal{C}$ allows us to define a categorical analog of \emph{trace}, $\Tr(f)\in\mathbbm{k}$ for any morphism $f:X\longrightarrow X^{\ast\ast}$ \cite[Section 4.7]{tcat} given by
\[\Tr(f):\1\stackrel{\text{coev}_X}{\longrightarrow}X\otimes X^\ast\stackrel{f\otimes\text{id}_{X^\ast}}{\longrightarrow}X^{\ast\ast}\otimes X^\ast\stackrel{\text{eval}_{X^\ast}}{\longrightarrow}\1.\]
Tensor categories with a pivotal structure $a_X:X\stackrel{\sim}{\longrightarrow}X^{\ast\ast}$ for all objects $X$ will be called \emph{pivotal} themselves.

\begin{defi}\label{spherical}
The (categorical or quantum) \emph{dimension} of an object $X$ in pivotal tensor category $\mathcal{C}$ is $\dim(X):=\Tr(a_X)\in\mathbbm{k}$ while
\[\dim(\mathcal{C}):=\sum_{X\in\mathcal{O}(\mathcal{C})}|X|^2\]
where $|X|^2=\Tr(a_X)\Tr((a_X^{-1})^\ast)$.  A pivotal structure on a tensor category is called \emph{spherical} if $\dim(X)=\dim(X^\ast)$ for all $X\in\mathcal{O}(\mathcal{C})$, while spherical braided fusion categories are called \emph{pre-modular}.
\end{defi}

\begin{example}[Vector spaces]
\par There is only one simple object in $\Vep$ up to isomorphism, the one-dimensional $\mathbbm{k}$-vector space $\1$, and the aforementioned pivotal structure given by $a_V:V\stackrel{\sim}{\longrightarrow} V^{\ast\ast}$ by $v\mapsto\{f\mapsto f(v)\}$ is spherical.  It is easily verified that $\dim(\1)=1$ and since the categorical notion of dimension is additive, then in this case the categorical dimension matches the usual notion of the dimension of a $\mathbbm{k}$-vector space.
\par More generally since the categories $\mathcal{C}(A,q)$ are pointed, the evaluation, coevaluation, and spherical structure can be realized by identity maps.  So $\dim(X)=1$ for all simple $X\in\mathcal{C}(A,q)$ and all metric groups $(A,q)$ and one can conclude $\dim(\mathcal{C}(A,q))=|A|$.
\end{example}

\par There is a second notion of dimension defined in terms of the Grothendieck ring $K(\mathcal{C})$ of a fusion category $\mathcal{C}$.  As noted in Section 3.3 of \cite{tcat}, there exists a unique ring homomorphism $\FP:K(\mathcal{C})\longrightarrow\mathbbm{R}$ such that $\FP(X)>0$ for any $0\neq X\in\mathcal{C}$. This \emph{Frobenius-Perron dimension} gives an analog to the dimension of the category $\mathcal{C}$ itself as in Definition \ref{spherical}, given by
\[\FP(\mathcal{C})=\sum_{X\in\mathcal{O}(\mathcal{C})}\FP(X)^2.\]

\par Fusion categories for which $\FP(\mathcal{C})=\dim(\mathcal{C})$ are called \emph{pseudo-unitary} and it is known that for such a category there exists a unique spherical structure with $\FP(X)=\dim(X)$ for all $X\in\mathcal{O}(\mathcal{C})$, allowing us to only consider $\dim(X)$ in these cases.  It will be important to future computations that $\dim(X)>0$ for pseudo-unitary fusion categories.

\par If a braided fusion category is equipped with a spherical structure, there exist natural isomorphisms $\theta_X:X\stackrel{\sim}{\longrightarrow}X$ for all $X\in\mathcal{C}$ known as \emph{twists} (or the ribbon structure) compatible with the braiding isomorphisms found in (\ref{braiding}) of Section \ref{firstone} \cite[Section 8.10]{tcat}.  In the case of pointed fusion categories $\mathcal{C}(A,q)$ (Example \ref{pointed}), for any $x\in A$ the map $\theta_x=b(x,x)\text{id}_x$ defines a ribbon structure.  The diagonal matrix consisting of the twists $\theta_X$ over all $X\in\mathcal{O}(\mathcal{C})$ is called the $T$-matrix of $\mathcal{C}$.

\par Finally we end this subsection by tying the notions of trace and dimension in spherical categories to the nondegeneracy conditions defined by the centralizer construction (Definition \ref{def:centralizer}).

\begin{defi}\label{def:modular}
The $S$-matrix of a pre-modular category $\mathcal{C}$ is the matrix $(s_{XY})_{X,Y\in\mathcal{O}(\mathcal{C})}$ where $s_{X,Y}:=\Tr(c_{Y,X}\circ c_{X,Y})$.  A pre-modular category is \emph{modular} if the determinant of its $S$-matrix is nonzero.
\end{defi}
\begin{note}
It is well-known that a pre-modular category $\mathcal{C}$ is modular if and only if it is nondegenerate ($\mathcal{C}'=\Vep$). \cite[Proposition 3.7]{DGNO}\cite{mug1}
\end{note}


\subsection{\'Etale Algebras}\label{etale}

\par For this exposition, an \emph{algebra} $A$ in a fusion category $\mathcal{C}$ is an associative algebra with unit which is equipped with a multiplication map $m:A\otimes A\longrightarrow A$.  If $m$ splits as a morphism of $A$-bimodules, we refer to $A$ as \emph{separable}.  This criterion ensures that $\mathcal{C}_A$, the category of right $A$-modules is semisimple, and also $_A\,\mathcal{C}$, $_A\,\mathcal{C}_A$, the categories of left $A$-modules and $A$-bimodules respectively \cite[Proposition 2.7]{DMNO}.

\begin{defi}
An algebra $A$ in a fusion category $\mathcal{C}$ is \emph{\'etale} if it is both commutative and separable.  This algebra is \emph{connected} if $\dim_\mathbbm{k}\Hom(\1,A)=1$.
\end{defi}
\begin{note}
\'Etale algebras have also been referred to as \emph{condensable} algebras in the physics literature.  The following description of the categories $\mathcal{C}_A$ when $A$ is connected \'etale is summarized from Sections 3.3 and 3.5 of \cite{DMNO}.
\end{note}

\par Braidings on $\mathcal{C}$ give rise to functors $G:\tensor[]{\mathcal{C}}{_A}\longrightarrow \tensor[_A]{\mathcal{C}}{_A}$ defined as $M\mapsto M_-$ (the identity map as right $A$-modules), where the left $A$-module structure on $M_-$ is given as composition of the reverse braiding with the right $A$-module structure map $\rho$:
\[A\otimes M\stackrel{\tilde{c}_{M,A}}{\longrightarrow}M\otimes A\stackrel{\rho}{\longrightarrow}M.\]
The commutativity of $A$ implies $\tensor[_A]{\mathcal{C}}{_A}$ is a tensor category and thus the above functor $G$ provides a tensor structure for $\tensor[]{\mathcal{C}}{_A}$ which we denote $\otimes_A$.  One can also define a tensor structure, opposite to the one above, on $\tensor[]{\mathcal{C}}{_A}$ by composing the right $A$-module structure map with the usual braiding.  We will denote the resulting left $A$-module produced from $M$ as $M_+$.

\par With the tensor structure defined on $\tensor[]{\mathcal{C}}{_A}$ by the functor $G$, the free module functor $F:\mathcal{C}\to\tensor[]{\mathcal{C}}{_A}$ is a tensor functor.  In particular $F(\1)=A$ is the unit object of $\tensor[]{\mathcal{C}}{_A}$ which is simple by the assumption that $A$ is connected.  The category $\tensor[]{\mathcal{C}}{_A}$ is also rigid since any object $M\in\tensor[]{\mathcal{C}}{_A}$ is a direct summand of the rigid object
\[F(M)=M\otimes A=M\otimes_A(A\otimes A).\]
The above discussion implies $\tensor[]{\mathcal{C}}{_A}$ is a fusion category when $A\in\mathcal{C}$ is connected \'etale.  Unfortunately the category $\tensor[]{\mathcal{C}}{_A}$ is \emph{not} braided in general.  The issue lies in the inherent \emph{choice} of a left $A$-module structure on a given right $A$-module $M\in\tensor[]{\mathcal{C}}{_A}$.

\begin{defi}
If $\text{id}_M:M_-\longrightarrow M_+$ is an isomorphism of $A$-bimodules for $M\in\tensor[]{\mathcal{C}}{_A}$, we say that $M$ is \emph{dyslectic} (also called \emph{local} in the literature).
\end{defi}

Pareigis \cite{pareigis} originally studied the full subcategory of $\tensor[]{\mathcal{C}}{_A}$ consisting of dyslectic $A$-modules, denoted by $\mathcal{C}_A^0$ which is the correct subcategory of $\tensor[]{\mathcal{C}}{_A}$ to study to ensure a braiding exists (see also \cite{KiO}).  That is if $\mathcal{C}$ is a braided fusion category and $A\in\mathcal{C}$ a connected \'etale algebra, then $\mathcal{C}_A^0$ is a braided fusion category and furthermore if $\mathcal{C}$ is nondegenerate then $\mathcal{C}_A^0$ is nondegenerate as well.
\begin{defi}\label{def:anisotropic}
A braided fusion category $\mathcal{C}$ is \emph{completely anisotropic} if the only connected \'etale algebra in $\mathcal{C}$ is the unit object $\1$.
\end{defi}


\subsection{The Witt group of nondegenerate braided fusion categories}\label{witt}

\par Tensor categories are often regarded as a categorical analog of rings and there is a categorical construction which (in some ways) mimics the center of a ring.  The \emph{Drinfeld center} of a monoidal (tensor, fusion) category $\mathcal{C}$ is the category whose objects are pairs $(X,\{\gamma_{X,Y}\}_{Y\in\mathcal{C}})$ consisting of an object of $X\in\mathcal{C}$ and natural isomorphisms
\[\gamma_{X,Y}:X\otimes Y\stackrel{\sim}{\longrightarrow}Y\otimes X\]
for all objects of $Y\in\mathcal{C}$ that satisfy the same compatibility conditions as braidings found in (1) of Section 2.1; i.e.\ this definition is imposed so that $Z(\mathcal{C})$ is naturally braided.  Where the analogy to the center of a ring falls apart is that in general $Z(\mathcal{C})$ is much \emph{larger} than $\mathcal{C}$ as the same object $X\in\mathcal{C}$ may have many distinct braidings $\gamma_{X,Y}$ to be paired with it.  If $\mathcal{C}$ is a braided fusion category, the functors $\mathcal{C},\mathcal{C}^\text{rev}\longrightarrow Z(\mathcal{C})$ mapping objects $X$ to themselves paired with their inherent braiding isomorphisms in $\mathcal{C},\mathcal{C}^\text{rev}$ are fully faithful and their images centralize one another, giving a braided tensor functor
\begin{equation}
\mathcal{C}\boxtimes\mathcal{C}^\text{rev}\longrightarrow Z(\mathcal{C})\label{centeriso}
\end{equation}
which has been shown to be an isomorphism if and only if $\mathcal{C}$ is modular \cite[Proposition 3.7]{DGNO}\cite[Theorem 7.10]{mug2}.

\par It is not obvious whether a given nondegenerate braided fusion category arises as the Drinfeld center of another.  The Witt group of nondegenerate braided fusion categories can be seen as a device for organizing nondegenerate braided fusion categories by equating those that differ only by the Drinfeld center of another.

\begin{defi}
The \emph{Witt group of nondegenerate braided fusion categories} (hereby called \emph{the Witt group}, or simply $\mathcal{W}$) is the set of equivalence classes of nondegenerate braided fusion categories $[\mathcal{C}]$ where $[\mathcal{C}]=[\mathcal{D}]$ if there exist fusion categories $\mathcal{A}_1$ and $\mathcal{A}_2$ such that $\mathcal{C}\boxtimes Z(\mathcal{A}_1)\simeq\mathcal{D}\boxtimes Z(\mathcal{A}_2)$ as braided fusion categories.
\end{defi}
The title \emph{group} is justified as the Deligne tensor product equips $\mathcal{W}$ with a commutative monoidal structure (with unit $[\Vep]$) while (\ref{centeriso}) implies that $[\mathcal{C}]^{-1}=[\mathcal{C}^\text{rev}]$ \cite[Lemma 5.3]{DMNO}.

\par Completely anisotropic categories (Definition \ref{def:anisotropic}) play a special role in the study of $\mathcal{W}$.  As noted in Theorem 5.13 of \cite{DMNO} each Witt equivalence class in $\mathcal{W}$ contains a completely anisotropic category that is unique up to braided equivalence.  To produce such a representative one can locate a maximal connected \'etale algebra $A\in\mathcal{C}$ and the passage to the category of dyslectic $A$-modules $\mathcal{C}_A^0$ does not change the Witt equivalency class, i.e.\  $[\mathcal{C}_A^0]=[\mathcal{C}]$ \cite[Proposition 5.4]{DMNO}.

\par One impetus to understanding the structure of $\mathcal{W}$ is that the decomposition of a nondegenerate braided fusion category given in Proposition \ref{primedecomp} is not unique in general.  The extent of this lack of of uniqueness is illustrated in Section 4.2 of \cite{mug1}.


\par The last tool we will set up in this section is a numerical invariant that will allow us to quickly prove that Witt equivalence classes of categories are distinct.  Assume for the rest of this section that $\mathcal{C}$ is a modular tensor category over $\mathbb{C}$ (Definition \ref{def:modular}).

\par Recall the \emph{multiplicative central charge} $\xi(\mathcal{C})\in\mathbb{C}$ \cite[Section 8.15]{tcat} which satisfies the following important properties.
\begin{mylemma}\label{charge}  For any modular tensor categories $\mathcal{C}_1$ and $\mathcal{C}_2$
\begin{enumerate}
\item[\textnormal{(a)}]$\xi(\mathcal{C})$ is a root of unity,
\item[\textnormal{(b)}]$\xi(\mathcal{C}_1\boxtimes\mathcal{C}_2)=\xi(\mathcal{C}_1)\xi(\mathcal{C}_2)$, and
\item[\textnormal{(c)}]$\xi(\mathcal{C}^\text{rev})=\xi(\mathcal{C})^{-1}$.
\end{enumerate}
\end{mylemma}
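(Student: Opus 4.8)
The plan is to establish each of the three properties of the multiplicative central charge $\xi(\mathcal{C})$ by reducing them to standard facts about the Gauss sums of modular tensor categories. Recall that $\xi(\mathcal{C})$ is defined (see \cite[Section 8.15]{tcat}) as the ratio $\tau^+(\mathcal{C})/\sqrt{\dim(\mathcal{C})}$, where $\tau^\pm(\mathcal{C})=\sum_{X\in\mathcal{O}(\mathcal{C})}\theta_X^{\pm1}\dim(X)^2$ are the Gauss sums, and the Anderson--Moore--Vafa type identity gives $\tau^+(\mathcal{C})\tau^-(\mathcal{C})=\dim(\mathcal{C})$, so that $|\xi(\mathcal{C})|=1$; equivalently $\xi(\mathcal{C})=\tau^+(\mathcal{C})/\tau^-(\mathcal{C})^{-1}$ up to the normalization. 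The key structural input is that in a modular category the $S$ and $T$ matrices give a projective representation of $SL_2(\mathbb{Z})$, and $\xi(\mathcal{C})$ appears precisely as the scalar measuring the failure of $(ST)^3=S^2$ to hold on the nose.

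First I would prove (a), that $\xi(\mathcal{C})$ is a root of unity. This is the content of the Vafa-type theorem: the twists $\theta_X$ in a modular category are roots of unity (a theorem of Anderson--Moore and Vafa, stated categorically in \cite[Theorem 8.18.1?]{tcat} or \cite{DGNO}), and the order of $T$ in the projective $SL_2(\mathbb{Z})$ representation is finite, which forces the anomaly $\xi(\mathcal{C})$ to be a root of unity. I would simply cite this; in the pseudo-unitary setting one can also invoke the fact that $\xi(\mathcal{C})^{?}$ relates to the central charge $c$ modulo $8$, but the cleanest route is the direct citation to \cite{tcat} or \cite[Appendix]{DGNO} that $\xi(\mathcal{C})$ is a root of unity of order dividing some explicit integer.

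Next, properties (b) and (c) are genuinely routine once the Gauss sum description is in hand. For (b): under Deligne's tensor product the set of simple objects is $\mathcal{O}(\mathcal{C}_1)\times\mathcal{O}(\mathcal{C}_2)$, the twists multiply, $\theta_{X\boxtimes Y}=\theta_X\theta_Y$, and the dimensions multiply, $\dim(X\boxtimes Y)=\dim(X)\dim(Y)$, so $\tau^+(\mathcal{C}_1\boxtimes\mathcal{C}_2)=\tau^+(\mathcal{C}_1)\tau^+(\mathcal{C}_2)$ and likewise $\dim(\mathcal{C}_1\boxtimes\mathcal{C}_2)=\dim(\mathcal{C}_1)\dim(\mathcal{C}_2)$, whence the claimed factorization of the ratio. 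For (c): passing to $\mathcal{C}^{\text{rev}}$ reverses the braiding, which replaces each twist $\theta_X$ by $\theta_X^{-1}$ while leaving all dimensions unchanged; hence $\tau^+(\mathcal{C}^{\text{rev}})=\tau^-(\mathcal{C})$, and using $\tau^+(\mathcal{C})\tau^-(\mathcal{C})=\dim(\mathcal{C})$ together with $\dim(\mathcal{C}^{\text{rev}})=\dim(\mathcal{C})$ gives $\xi(\mathcal{C}^{\text{rev}})=\tau^-(\mathcal{C})/\sqrt{\dim(\mathcal{C})}=\sqrt{\dim(\mathcal{C})}/\tau^+(\mathcal{C})=\xi(\mathcal{C})^{-1}$.

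The only real obstacle is (a): it rests on the deep theorem that the twists (equivalently the eigenvalues of the $T$-matrix) in a modular tensor category are roots of unity, which in turn gives finiteness of the order of the $SL_2(\mathbb{Z})$-representation and hence that the anomaly is a root of unity. Rather than reprove this, I would cite the standard reference (\cite[Section 8.15--8.18]{tcat}, or \cite[Proposition 6.?]{DGNO}), since the whole point of this lemma is to package these known facts in the form needed for the Witt-group computations in Section \ref{sec:class}. Parts (b) and (c) I would dispatch in two or three lines each using the Gauss sum formulas above.
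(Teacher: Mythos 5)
Your proposal is correct and is essentially the paper's approach: the paper offers no proof of Lemma \ref{charge}, treating these as known properties of the multiplicative central charge and citing \cite[Section 8.15]{tcat}, and your Gauss-sum argument (multiplicativity of $\tau^{\pm}$ and $\dim$ under $\boxtimes$, $\tau^+(\mathcal{C}^{\text{rev}})=\tau^-(\mathcal{C})$ for (c), and citing the Anderson--Moore--Vafa theorem for (a)) is precisely the standard reasoning behind that citation. One cosmetic slip: the parenthetical ``$\xi(\mathcal{C})=\tau^+(\mathcal{C})/\tau^-(\mathcal{C})^{-1}$'' should read $\xi(\mathcal{C})^2=\tau^+(\mathcal{C})/\tau^-(\mathcal{C})$, but nothing in your argument depends on it.
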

The equivalence in (\ref{centeriso}) along with Lemma \ref{charge} (b),(c) implies that $\xi(Z(\mathcal{C}))=1$.  Lemma 5.27 of \cite{DMNO} proves further that multiplicative central charge is a numerical invariant of Witt equivalency classes.  This allow us to distinguish Witt equivalency classes and predict the possible order of elements in $\mathcal{W}$.


\section{The categories $\mathcal{C}(\mathfrak{sl}_3,k)$, $k\in\mathbb{Z}_{>0}$}

\subsection{Classical and Modular $\mathfrak{sl}_3$ theories}\label{sec:classical}

\par All notation and concepts in the classical theory are standard and can be found for instance in \cite{hump}, while a brief introduction to the modular theory can be found in Chapters 3.3 and 7 of \cite{BaKi}.

\par The Lie algebra $\mathfrak{sl}_3$ contains a two-dimensional Cartan subalgebra $\mathfrak{h}$ so it has two associated simple roots $\alpha_1,\alpha_2$ which span $\mathfrak{h}^\ast$ as a $\mathbb{C}$-vector space.  There is a nondegenerate symmetric bilinear form $\langle\cdot\,,\cdot\rangle$ on $\mathfrak{h}^\ast$ (normalized so that $\langle\alpha_i,\alpha_i\rangle=2$ for $i=1,2$) which allows us to consider $\varepsilon_1,\varepsilon_2$, the dual basis for $\mathfrak{h}^\ast$ with respect to this form.  The pair $(\mathfrak{h}^\ast,\langle\cdot\,,\cdot\rangle)$ is then a Euclidean space which we denote by $E$.  Finite-dimensional irreducible $\mathcal{U}(\mathfrak{sl}_3)$-modules are in one-to-one correspondence with pairs of nonnegative integers $(m_1,m_2)$.  Under this correspondence the pair $(m_1,m_2)$ is associated with the irreducible $\mathcal{U}(\mathfrak{sl}_3)$-module of highest weight $m_1\varepsilon_1+m_2\varepsilon_2$.  All such weights are called \emph{dominant} and their collection is denoted $\Lambda^+$.  Alternatively one can describe dominant weights as those that lie in the \emph{closure of the fundamental Weyl chamber}:
\[C:=\{x\in E:\langle x,\alpha_i\rangle\geq0\text{ for }i=1,2\}.\]
This classical setup is illustrated below in Figure \ref{fig:classical} where we denote the root corresponding to the adjoint representation of $\mathfrak{sl}_3$ by $\theta$ for future reference.

\begin{figure}[h!]
\centering
\begin{tikzpicture}[scale=1.5]
\fill [blue,opacity=.1] (0,0) -- (0,4*1.732/3) -- (2,4*1.732/3) -- (2,2*1.732/3) -- cycle;

\node at (0,0) {$\bullet$};
\node at (0,1.732/3) {$\bullet$};
\node at (0,1.732/3+0.195) {$\varepsilon_2$};
\node at (0,2*1.732/3) {$\bullet$};
\node at (0,3*1.732/3) {$\bullet$};
\node at (0,4*1.732/3) {$\bullet$};
\node at (1/2,0.5*1.732/3) {$\bullet$};
\node at (1/2+0.21,0.5*1.732/3+0.12) {$\varepsilon_1$};
\node at (1/2,1*1.732/3+0.5*1.732/3) {$\bullet$};
\node at (1/2+0.12,1*1.732/3+0.5*1.732/3+0.195) {$\theta$};
\node at (1/2,2*1.732/3+0.5*1.732/3) {$\bullet$};
\node at (1/2,3*1.732/3+0.5*1.732/3) {$\bullet$};
\node at (1,1*1.732/3) {$\bullet$};
\node at (1,2*1.732/3) {$\bullet$};
\node at (1,3*1.732/3) {$\bullet$};
\node at (1,4*1.732/3) {$\bullet$};
\node at (1.5,1*1.732/3+0.5*1.732/3) {$\bullet$};
\node at (1.5,2*1.732/3+0.5*1.732/3) {$\bullet$};
\node at (1.5,3*1.732/3+0.5*1.732/3) {$\bullet$};
\node at (2,2*1.732/3) {$\bullet$};
\node at (2,3*1.732/3) {$\bullet$};
\node at (2,4*1.732/3) {$\bullet$};

\foreach \i in {-1,...,1}
\foreach \j in {-1,...,3}
{
\draw[fill=black] (\i+1/2,\j*1.732/3+1.732/6) circle (0.01);
}

\foreach \i in {-1,...,2}
\foreach \j in {-1,...,4}
{
\draw[fill=black] (\i,\j*1.732/3) circle (0.01);
}

\draw[->,dotted] (0,0) to (1,0) node[below right] {$\alpha_1$};
\draw[->,dotted] (0,0) to (1/2,1.732/2);
\draw[->,dotted] (0,0) to (-1/2,1.732/2) node[above left] {$\alpha_2$};

\draw[->,dashed,thick] (0,0) to (2,2*1.732/3);
\draw[->,dashed,thick] (0,0) to (0,4*1.732/3);

\end{tikzpicture}
    \caption{Classical $\mathfrak{sl}_3$ theory}%
    \label{fig:classical}%
\end{figure}
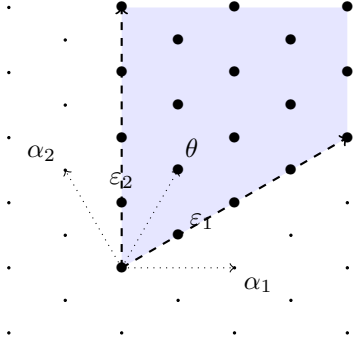

\par If $\mathfrak{g}$ is a finite-dimensional simple Lie algebra and $\hat{\mathfrak{g}}$ is the corresponding affine Lie algebra, then for all $k\in\mathbb{Z}_{>0}$ one can associate a pseudo-unitary modular tensor category $\mathcal{C}(\mathfrak{g},k)$ consisting of highest weight integrable $\hat{\mathfrak{g}}$-modules of level $k$.  This category was studied by Andersen and Paradowski \cite{ap} and Finkelberg \cite{fink} later proved that $\mathcal{C}(\mathfrak{g},k)$ is equivalent to the semisimple portion of the representation category of Lusztig's quantum group $\mathcal{U}_q(\mathfrak{g})$ when $q=e^{\pi i/(k+h^\vee)}$ where $h^\vee$ is the dual coxeter number for $\mathfrak{g}$ \cite[Chapter 7]{BaKi}.  Simple objects of $\mathcal{C}(\mathfrak{sl}_3,k)$ are in one-to-one correspondence with pairs of nonnegative integers $(m_1,m_2)$ such that $m_1+m_2\leq k$.

\par There is a geometric description of these pairs as in the classical setting.   The \emph{Weyl alcove} is defined as $C_0:=\{x\in E:\langle x,\theta\rangle\leq k\}$ where $\theta$ is again the highest weight corresponding to the adjoint representation of $\mathfrak{sl}_3$.  Simple objects in $\mathcal{C}(\mathfrak{sl}_3,k)$ then correspond to the weights in the intersection $\Lambda_0:=\Lambda^+\cap C_0$.  Alternatively, $C_0$ can be defined in terms of the \emph{quantum Weyl group} $\mathfrak{W}_0$ generated by reflections $\tau_1,\tau_2$, the reflections through the hyperplanes $T_1,T_2:=\{x\in E:\langle x+\rho,\alpha_i\rangle=0\}$ where $\rho:=(1/2)(\alpha_1+\alpha_2)$, and $\tau_3$, the reflection through the hyperplane $T_3:=\{x\in E:\langle x,\theta\rangle=k+1\}$.  These concepts are illustrated in Figure \ref{fig:modular} below.

\par Very often we will use $\lambda$ or $(m_1,m_2)$ to represent the simple object of $\mathcal{C}(\mathfrak{sl}_3,k)$ corresponding to the weight $\lambda$ or $m_1\varepsilon_1+m_2\varepsilon_2$ in $\Lambda_0$.  Sentences such as ``$\lambda\otimes\mu$ contains $\nu$ as a direct summand'' or reference to ``$\dim(2,3)$'' will then make sense in future contexts.
\begin{figure}[h!]
\centering
\begin{tikzpicture}[scale=1.5]
\fill [blue,opacity=.1] (0,0) -- (0,2*1.732/2) -- (3*1/2,1*1.732/2) -- cycle;
\node at (0,1.732/3+0.195) {$\varepsilon_2$};
\node at (1/2+0.21,0.5*1.732/3+0.12) {$\varepsilon_1$};
\node at (0,0) {$\bullet$};
\node at (0,1.732/3) {$\bullet$};
\node at (0,2*1.732/3) {$\bullet$};
\node at (0,3*1.732/3) {$\bullet$};
\node at (1/2,0.5*1.732/3) {$\bullet$};
\node at (1/2,1*1.732/3+0.5*1.732/3) {$\bullet$};
\node at (1/2+0.12,1*1.732/3+0.5*1.732/3+0.195) {$\theta$};
\node at (1/2,2*1.732/3+0.5*1.732/3) {$\bullet$};
\node at (1,1*1.732/3) {$\bullet$};
\node at (1,2*1.732/3) {$\bullet$};
\node at (1.5,1*1.732/3+0.5*1.732/3) {$\bullet$};

\foreach \i in {-2,...,2}
\foreach \j in {-2,...,4}
{
\draw[fill=black] (\i+1/2,\j*1.732/3+1.732/6) circle (0.01);
}

\foreach \i in {-1,...,3}
\foreach \j in {-2,...,5}
{
\draw[fill=black] (\i,\j*1.732/3) circle (0.01);
}

\draw[->,dotted] (0,0) to (1,0) node[below right] {$\alpha_1$};
\draw[->,dotted] (0,0) to (1/2,1.732/2);
\draw[->,dotted] (0,0) to (-1/2,1.732/2) node[above left] {$\alpha_2$};

\draw[<->,dashed,thick] (-1.5,-2*1.732/3) to node[circle,draw=black,pos = 0.6,inner sep=1pt,fill=white,solid,thin] {$T_2$} (3,2.5*1.732/3);
\draw[<->,dashed,thick] (-1/4,-2*1.732/3) to node[circle,draw=black,pos = 0.65,inner sep=1pt,fill=white,solid,thin] {$T_1$} (-1/4,5*1.732/3);
\draw[<->,dashed,thick] (-1,5*1.732/3) to node[circle,draw=black,pos = 0.5,inner sep=1pt,fill=white,solid,thin] {$T_3$} (3,1.732/3);

\end{tikzpicture}
    \caption{Modular $\mathfrak{sl}_3$ theory at level $k=3$}%
    \label{fig:modular}%
\end{figure}
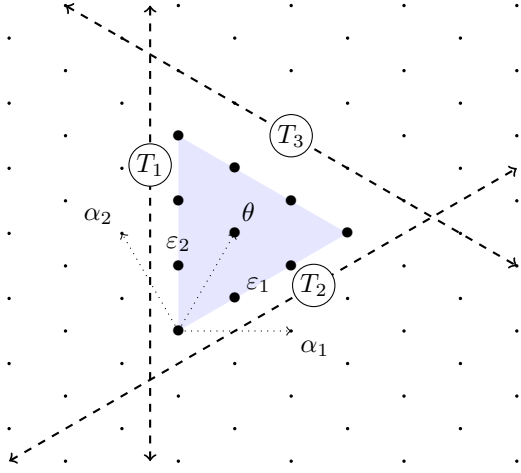


\subsection{Numerical Data for $\mathcal{C}(\mathfrak{sl}_3,k)$}\label{sec:moddata}
Explicit formulas for the modular data of $\mathcal{C}(\mathfrak{g},k)$ are well-known \cite[Section 3.3]{BaKi} but we include these computations in the case of $\mathfrak{sl}_3$ for the reader's convenience.  For example we have the following formula for twists on simple objects.
\begin{mylemma}\label{twist}
For all $(m_1,m_2)\in\Lambda_0$
\[\theta_{m_1,m_2}=\exp\left(\frac{m_1^2+3m_1+m_1m_2+3m_2+m_2^2}{3(k+3)}\cdot2\pi i\right).\]
\end{mylemma}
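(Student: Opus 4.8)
The plan is to derive the formula as a specialization of the standard expression for the ribbon twist on $\mathcal{C}(\mathfrak{g},k)$. Recall from \cite[Section 3.3]{BaKi} that the twist on the simple object of $\mathcal{C}(\mathfrak{g},k)$ labelled by a dominant weight $\lambda$ is
\[\theta_\lambda=\exp\!\left(2\pi i\cdot\frac{\langle\lambda,\lambda+2\rho\rangle}{2(k+h^\vee)}\right),\]
where $h^\vee$ is the dual Coxeter number and $\rho$ is the half-sum of the positive roots; equivalently, under Finkelberg's equivalence this is the action of the ribbon element of $\mathcal{U}_q(\mathfrak{g})$ at $q=e^{\pi i/(k+h^\vee)}$, and the numerator $\langle\lambda,\lambda+2\rho\rangle$ is the eigenvalue of the quadratic Casimir on the $\mathfrak{g}$-module of highest weight $\lambda$. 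For $\mathfrak{g}=\mathfrak{sl}_3$ one has $h^\vee=3$, and in the coordinates of Section \ref{sec:classical} the positive roots are $\alpha_1,\alpha_2,\alpha_1+\alpha_2$, so $\rho=\alpha_1+\alpha_2=\varepsilon_1+\varepsilon_2$. Hence the lemma reduces to the identity $\langle\lambda,\lambda+2\rho\rangle=\tfrac{2}{3}\bigl(m_1^2+m_2^2+m_1m_2+3m_1+3m_2\bigr)$ for $\lambda=m_1\varepsilon_1+m_2\varepsilon_2$.

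To establish this identity I would first write down the Gram matrix of $\langle\cdot\,,\cdot\rangle$ in the basis $\{\varepsilon_1,\varepsilon_2\}$. The normalization $\langle\alpha_i,\alpha_i\rangle=2$ together with the simply-laced Dynkin diagram of $\mathfrak{sl}_3$ makes $(\langle\alpha_i,\alpha_j\rangle)_{i,j}$ the Cartan matrix $\bigl(\begin{smallmatrix}2&-1\\-1&2\end{smallmatrix}\bigr)$, and since $\varepsilon_1,\varepsilon_2$ is by definition the dual basis to $\alpha_1,\alpha_2$, the matrix $(\langle\varepsilon_i,\varepsilon_j\rangle)_{i,j}$ is the inverse Cartan matrix $\tfrac{1}{3}\bigl(\begin{smallmatrix}2&1\\1&2\end{smallmatrix}\bigr)$. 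Writing $\lambda+2\rho=(m_1+2)\varepsilon_1+(m_2+2)\varepsilon_2$ and expanding $\langle\lambda,\lambda+2\rho\rangle$ bilinearly against this Gram matrix is then a one-line computation that yields the identity above; substituting back into the formula for $\theta_\lambda$ with $h^\vee=3$ and cancelling the factor of $2$ gives precisely the claimed expression for $\theta_{m_1,m_2}$.

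I do not expect a genuine obstacle here: once the general twist formula is taken as known, the remainder is a deterministic finite computation, and the answer is visibly a root of unity since its argument is rational. The only real care needed is in convention-tracking --- using the bilinear form normalized by $\langle\alpha_i,\alpha_i\rangle=2$ (so that $\varepsilon_i$ is the $i$-th fundamental weight, not a fundamental coweight), and keeping $\rho$ distinct from $2\rho$ inside the Casimir eigenvalue. As a sanity check one can specialize to $(1,0)$, which gives $\theta_{1,0}=\exp\bigl(2\pi i\cdot\tfrac{4}{3(k+3)}\bigr)$, matching the well-known conformal weight $\tfrac{4}{3(k+3)}$ of the level-$k$ primary attached to the defining $3$-dimensional representation of $\mathfrak{sl}_3$ (for instance $\tfrac{1}{3}$ at $k=1$).
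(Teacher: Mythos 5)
Your proposal is correct and follows essentially the same route as the paper: the paper's proof simply cites the quantum group twist formula $\theta_\lambda=q^{\langle\lambda,\lambda+2\rho\rangle}$ with $q=e^{\pi i/(k+3)}$ (Theorem 3.3.20 of Bakalov--Kirillov), which is exactly your starting point, and your explicit inverse-Cartan-matrix evaluation of $\langle\lambda,\lambda+2\rho\rangle=\tfrac{2}{3}(m_1^2+m_2^2+m_1m_2+3m_1+3m_2)$ is the computation the paper leaves implicit. The sanity check at $(1,0)$ and the careful identification of $\rho=\varepsilon_1+\varepsilon_2$ as the half-sum of positive roots are both sound.
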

\begin{proof}
In Lusztig's quantum group realization of $\mathcal{C}(\mathfrak{sl}_3,k)$, $q=\exp\left(\pi i/(k+3)\right)$ and one can compute twists using the formula $\theta_\lambda=q^{\langle\lambda,\lambda+2\rho\rangle}$ for any $\lambda\in\Lambda_0$ by Theorem 3.3.20 of \cite{BaKi}.
\end{proof}
To simplify notation in the formula for the (categorical) dimensions of simple objects of $\mathcal{C}(\mathfrak{sl}_3,k)$ we define the quantum integers
\[[n]:=\dfrac{q^n-q^{-n}}{q-q^{-1}}=q^{n-1}+q^{n-3}+\cdots+q^{-(n-3)}+q^{-(n-1)}.\]
This notation produces a formula for dimensions akin to the classical Weyl dimension formula for the dimensions of finite-dimensional $\mathcal{U}(\mathfrak{sl}_3)$-modules as complex vector spaces \cite[Equation (3.3.5)]{BaKi}.
\begin{mylemma}[Quantum Weyl dimension formula]
For all $(m_1,m_2)\in\Lambda_0$
\begin{align*}\dim(m_1,m_2)&=\frac{\left[\langle\alpha_1,m_1\varepsilon_1+m_2\varepsilon_2+\rho\rangle\right]\left[\langle\alpha_2,m_1\varepsilon_1+m_2\varepsilon_2+\rho\rangle\right]}{[\langle\alpha_1,\rho\rangle][\langle\alpha_2,\rho\rangle]} \\
&=\dfrac{1}{[2]}[m_1+1][m_2+1][m_1+m_2+2].
\end{align*}
\end{mylemma}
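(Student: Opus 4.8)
The plan is to derive both equalities from the quantum Weyl dimension formula \cite[Equation (3.3.5)]{BaKi}, which for a simple object $\lambda\in\Lambda_0$ of $\mathcal{C}(\mathfrak{sl}_3,k)$ expresses the categorical dimension as a product
\[\dim(\lambda)=\prod_{\alpha>0}\frac{[\langle\lambda+\rho,\alpha\rangle]}{[\langle\rho,\alpha\rangle]}\]
with one factor for each positive root $\alpha$ of $\mathfrak{sl}_3$. The first step is to record that $\mathfrak{sl}_3$ has exactly three positive roots, namely $\alpha_1$, $\alpha_2$, and $\theta=\alpha_1+\alpha_2$; this accounts for the three factors in the closed form on the second line, the extra one beyond the two simple-root contributions being $[m_1+m_2+2]/[2]$.

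The remaining step is a direct computation of the pairings entering the product. Since $\varepsilon_1,\varepsilon_2$ is the basis of $\mathfrak{h}^\ast$ dual to $\alpha_1,\alpha_2$ under $\langle\cdot\,,\cdot\rangle$, one has $\langle\varepsilon_i,\alpha_j\rangle=\delta_{ij}$, so for $\lambda=m_1\varepsilon_1+m_2\varepsilon_2$ we get $\langle\lambda,\alpha_1\rangle=m_1$ and $\langle\lambda,\alpha_2\rangle=m_2$. Combining the normalization $\langle\alpha_i,\alpha_i\rangle=2$ with the $A_2$ relation $\langle\alpha_1,\alpha_2\rangle=-1$ and the defining property $\langle\rho,\alpha_i\rangle=1$ of the Weyl vector gives $\langle\rho,\theta\rangle=2$, hence
\[\langle\lambda+\rho,\alpha_1\rangle=m_1+1,\qquad\langle\lambda+\rho,\alpha_2\rangle=m_2+1,\qquad\langle\lambda+\rho,\theta\rangle=m_1+m_2+2.\]
Substituting these values into the product and using $[1]=1$ yields
\[\dim(\lambda)=\frac{[m_1+1]}{[1]}\cdot\frac{[m_2+1]}{[1]}\cdot\frac{[m_1+m_2+2]}{[2]}=\frac{1}{[2]}[m_1+1][m_2+1][m_1+m_2+2],\]
which is the asserted formula. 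Note that $[2]=2\cos(\pi/(k+3))\neq0$ for every $k\geq1$, so the right-hand side is well-defined.

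I do not anticipate any genuine obstacle here: once \cite[Equation (3.3.5)]{BaKi} is invoked, the argument is the exact $q$-analogue of the classical Weyl computation giving $\tfrac12(m_1+1)(m_2+1)(m_1+m_2+2)$, and the only thing requiring care is bookkeeping with conventions — the identification of the $\varepsilon_i$ with the fundamental weights and of $\rho$ with $\alpha_1+\alpha_2$. As a consistency check one can verify that these same data, substituted into the identity $\theta_\lambda=q^{\langle\lambda,\lambda+2\rho\rangle}$, reproduce the twist formula of Lemma \ref{twist}.
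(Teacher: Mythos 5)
Your derivation is correct and is essentially the paper's own route: the paper simply invokes the quantum Weyl dimension formula of \cite[Equation (3.3.5)]{BaKi} and the same root-system bookkeeping, stating the lemma without a separate proof. Your reading of the conventions, namely that the product runs over all three positive roots $\alpha_1,\alpha_2,\theta$ and that $\rho=\varepsilon_1+\varepsilon_2=\alpha_1+\alpha_2$ so that $\langle\rho,\alpha_i\rangle=1$ and $\langle\rho,\theta\rangle=2$, is the one consistent with the paper's twist formula (Lemma \ref{twist}), even though the lemma's first displayed line shows only the two simple-root factors and Section \ref{sec:classical} writes $\rho=(1/2)(\alpha_1+\alpha_2)$.
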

Using elementary trigonometry, these dimensions can be expressed solely in terms of sines. Since the argument of $q^n$ is $n\pi/(k+3)$ as illustrated in Figure \ref{fig:modulus}, then
\[\left|[n]\right|=\left|\dfrac{q^n-q^{-n}}{q-q^{-1}}\right|=\dfrac{\sin\left(\dfrac{n\pi}{k+3}\right)}{\sin\left(\dfrac{\pi}{k+3}\right)}.\]
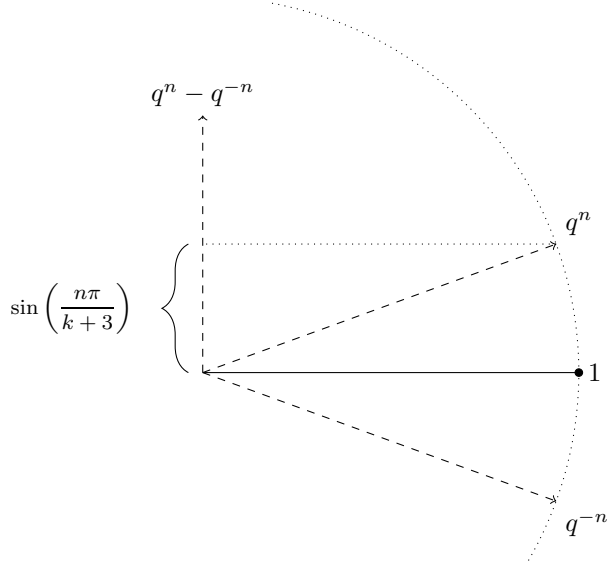
\begin{figure}[h!]
\centering
\begin{tikzpicture}
\draw[fill=black] (5,0) circle (0.05);
\draw (0,0) to (5,0) node[right] {1};
\draw[dashed,->] (0,0) to (-20:5) node[below right] {$q^{-n}$};
\draw[dashed,->] (0,0) to (20:5) node[above right] {$q^n$};
\draw[dashed,->] (0,0) to (0,5*0.68404) node[above] {$q^n-q^{-n}$};
\draw[dotted] (20:5) to (0,5/2*0.68404);
\draw[dotted] (-30:5) arc (-30:80:5);
\draw [decorate,decoration={brace,amplitude=10pt},xshift=-4pt,yshift=0pt]
(-0.05,0) -- (-0.05,5/2*0.68404) node [black,midway,xshift=-1.55cm] 
{\footnotesize $\sin\left(\dfrac{n\pi}{k+3}\right)$};
\end{tikzpicture}
    \caption{Modulus of $q^n-q^{-n}$}%
    \label{fig:modulus}%
\end{figure}
\par But since $\mathcal{C}(\mathfrak{sl}_3,k)$ is pseudo-unitary (Section \ref{modular}), then $|\dim(m_1,m_2)|=\dim(m_1,m_2)$ and the above reasoning along with the quantum Weyl formula imply the following proposition.
\begin{mytheorem}\label{prop:dim}
For all $(m_1,m_2)\in\Lambda_0$
\[\dim(m_1,m_2)=\dfrac{\sin\left(\dfrac{(m_1+1)\pi}{k+3}\right)\sin\left(\dfrac{(m_2+1)\pi}{k+3}\right)\sin\left(\dfrac{(m_1+m_2+2)\pi}{k+3}\right)}{\sin\left(\dfrac{2\pi}{k+3}\right)\sin^2\left(\dfrac{\pi}{k+3}\right)}.\]
\end{mytheorem}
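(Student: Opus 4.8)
The plan is to combine the quantum Weyl dimension formula (the second displayed equality in the previous lemma) with the trigonometric identity for $|[n]|$ that was just established. Since $\mathcal{C}(\mathfrak{sl}_3,k)$ is pseudo-unitary, the categorical dimensions are positive real numbers, so $\dim(m_1,m_2) = |\dim(m_1,m_2)|$; applying absolute values to the product formula $\dim(m_1,m_2) = \tfrac{1}{[2]}[m_1+1][m_2+1][m_1+m_2+2]$ turns it into $|\dim(m_1,m_2)| = \tfrac{|[m_1+1]|\,|[m_2+1]|\,|[m_1+m_2+2]|}{|[2]|}$ because absolute value is multiplicative.

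Next I would substitute the sine expression for each quantum integer. We have $|[n]| = \sin(n\pi/(k+3))/\sin(\pi/(k+3))$ for each of $n = m_1+1$, $n = m_2+1$, $n = m_1+m_2+2$, and $n = 2$. Plugging these in, the numerator contributes three sine factors each divided by $\sin(\pi/(k+3))$, giving an overall factor of $\sin^{-3}(\pi/(k+3))$, while dividing by $|[2]| = \sin(2\pi/(k+3))/\sin(\pi/(k+3))$ multiplies back one factor of $\sin(\pi/(k+3))$ and introduces $\sin(2\pi/(k+3))$ in the denominator. The net denominator is therefore $\sin(2\pi/(k+3))\sin^2(\pi/(k+3))$, exactly as claimed.

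One small point to verify is that $\sin(\pi/(k+3)) \neq 0$ and $\sin(2\pi/(k+3)) \neq 0$, which holds since $k \geq 1$ means $k+3 \geq 4$, so both $\pi/(k+3)$ and $2\pi/(k+3)$ lie strictly between $0$ and $\pi$. One should also confirm that the arguments $n\pi/(k+3)$ for $n = m_1+1, m_2+1, m_1+m_2+2$ are such that the sines are positive (they are, since $(m_1,m_2)\in\Lambda_0$ forces $1 \leq m_1+1, m_2+1 \leq k+1 < k+3$ and $2 \leq m_1+m_2+2 \leq k+2 < k+3$), so that no spurious sign appears when we pass from $|[n]|$ to the sine quotient and then assemble the final expression. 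This is essentially bookkeeping rather than a genuine obstacle.

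Honestly, there is no hard part here: the statement is a direct consequence of results already proved in the excerpt, and the proof amounts to substituting the sine formula for $|[n]|$ into the product form of the quantum Weyl dimension formula and simplifying. If anything, the only thing requiring a moment's care is tracking the powers of $\sin(\pi/(k+3))$ through the cancellation, and confirming that pseudo-unitarity legitimately lets us drop the absolute value bars at the end.
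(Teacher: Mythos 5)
Your argument is correct and follows exactly the same route as the paper: substitute the sine expression for $|[n]|$ into the quantum Weyl dimension formula and invoke pseudo-unitarity to replace $|\dim(m_1,m_2)|$ by $\dim(m_1,m_2)$. Your extra bookkeeping on the positivity of the sine arguments is fine but not needed beyond what the paper already asserts.
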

Lastly we recall a result influenced by Andersen and Paradowski and proven by Sawin as Corollary 8 in \cite{Sawin03}, giving a formula for the fusion rules in $\mathcal{C}(\mathfrak{sl}_3,k)$.
\begin{mytheorem}[Quantum Racah formula]\label{quantumracah} If $\lambda,\gamma,\eta\in\Lambda_0$ then $N_{\lambda,\gamma}^\eta:=\dim_\mathbb{C}\Hom(\eta,\lambda\otimes\gamma)$ is given by
\[N_{\lambda,\gamma}^\eta=\sum_{\tau\in\mathfrak{W}_0}(-1)^{\ell(\tau)}m_\gamma(\tau(\eta)-\lambda),\]
\noindent where $\ell(\tau)$ is the length of a reduced expression of $\tau\in\mathfrak{W}_0$ in terms of $\tau_1,\tau_2,\tau_3$ and $m_\lambda(\mu)$ is the dimension of the $\mu$-weight space in the classical representation of highest weight $\lambda$.
\end{mytheorem}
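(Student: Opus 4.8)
The plan is to derive the quantum Racah formula by combining two ingredients: the classical Brauer--Klimyk (Racah--Speiser) rule for ordinary $\mathfrak{sl}_3$-tensor products, and the description of the fusion ring of $\mathcal{C}(\mathfrak{sl}_3,k)$ as an affine-Weyl truncation of the classical one. First I would record the classical statement: for dominant weights $\lambda,\gamma,\nu$, the multiplicity $\bar N_{\lambda,\gamma}^{\,\nu}$ of the irreducible $V_\nu$ inside $V_\lambda\otimes V_\gamma$ as $\mathcal{U}(\mathfrak{sl}_3)$-modules equals $\sum_{w\in W}(-1)^{\ell(w)}m_\gamma\bigl(w(\nu+\rho)-\rho-\lambda\bigr)$, i.e.\ $\sum_{w\in W}(-1)^{\ell(w)}m_\gamma(w(\nu)-\lambda)$ in the $\rho$-shifted notation of the statement, where $W=\langle\tau_1,\tau_2\rangle$ is the finite Weyl group. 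This is the standard Weyl-character-formula manipulation: write $\chi_\lambda\chi_\gamma=\sum_\mu m_\gamma(\mu)\,e^{\mu}\chi_\lambda$, multiply by the Weyl denominator $\delta=\sum_{w\in W}(-1)^{\ell(w)}e^{w\rho}$, reindex using the $W$-invariance of $\mu\mapsto m_\gamma(\mu)$, and compare coefficients against $\delta\cdot\chi_\nu=\sum_{w}(-1)^{\ell(w)}e^{w(\nu+\rho)}$, using that $\sum_{w}(-1)^{\ell(w)}e^{w\mu}$ vanishes when $\mu$ lies on a reflection hyperplane and otherwise equals $(-1)^{\ell(w_0)}\delta\,\chi_{w_0\mu-\rho}$ for the unique $w_0\in W$ with $w_0\mu$ dominant. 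One may equally just quote this as the classical Racah formula; its useful ``extended'' form reads the same way for every $\nu$, vanishing when $\nu+\rho$ is singular and transforming by the sign character of $W$ under the $\rho$-shifted action.

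I would then invoke the Andersen--Paradowski / Kac--Walton theorem: since $\mathcal{C}(\mathfrak{sl}_3,k)$ is the semisimplification of $\Rep \mathcal{U}_q(\mathfrak{sl}_3)$ at $q=e^{\pi i/(k+3)}$, its fusion coefficients are obtained from the classical ones by reflecting the outgoing weight into the Weyl alcove with signs, $N_{\lambda,\gamma}^{\eta}=\sum_{\sigma}(-1)^{\ell(\sigma)}\bar N_{\lambda,\gamma}^{\,\sigma(\eta)}$, where $\sigma$ ranges over the affine Weyl group $\mathfrak{W}_0$ acting by the $\rho$-shifted level-$(k+3)$ action and only the terms with $\sigma(\eta)$ dominant survive. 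Now $\tau_1,\tau_2$ generate the finite Weyl group $W$ in this action, and adjoining $\tau_3$, the reflection in $T_3=\{\langle x,\theta\rangle=k+1\}$, produces exactly this affine Weyl group $\mathfrak{W}_0$, namely $W$ together with the translations of the level-$(k+3)$ dot action. Substituting the extended classical formula into the Kac--Walton sum and using that $\tau\mapsto(-1)^{\ell(\tau)}$ is a character of the Coxeter group $\mathfrak{W}_0$ restricting to the sign character of $W=\langle\tau_1,\tau_2\rangle$, the two signed sums coalesce: for each $\sigma$ with $\nu:=\sigma(\eta)$ dominant one has $\sum_{u\in W}(-1)^{\ell(u\sigma)}m_\gamma(u(\nu)-\lambda)=(-1)^{\ell(\sigma)}\bar N_{\lambda,\gamma}^{\,\nu}$, and for regular $\eta$ the cosets $W\sigma$ taken over all dominant-producing $\sigma$ partition $\mathfrak{W}_0$, so the products $u\sigma$ sweep out $\mathfrak{W}_0$ exactly once each. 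This yields $N_{\lambda,\gamma}^{\eta}=\sum_{\tau\in\mathfrak{W}_0}(-1)^{\ell(\tau)}m_\gamma(\tau(\eta)-\lambda)$, with $\tau(\eta)$ the $\rho$-shifted action dictated by the equations defining $T_1,T_2,T_3$; the sum is finite because $m_\gamma$ is supported on the finite weight diagram of $V_\gamma$.

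The main obstacle is the affine-truncation input: that level-$k$ fusion is the affine-Weyl folding of the classical tensor product is a genuine theorem, not a formal identity. It rests on either the representation theory of $\widehat{\mathfrak{sl}}_3$ and the fusion product of integrable highest-weight modules, or the tilting-module and good-filtration machinery for $\mathcal{U}_q(\mathfrak{sl}_3)$ at a root of unity together with the vanishing of negligible morphisms in the semisimplification; this is what Andersen--Paradowski establish and what Sawin re-derives, so in practice I would cite \cite{Sawin03} for it rather than reconstruct it. A secondary, purely bookkeeping point needing care is that one must work with the \emph{non-extended} affine Weyl group $\mathfrak{W}_0$, omitting the order-three group of diagram automorphisms that also stabilizes the alcove; check that the length function induced by $\{\tau_1,\tau_2,\tau_3\}$ is the one implicit in both the classical and the folded formulas; and dispose of weights $\eta$ on the alcove walls via the vanishing conventions already built into the extended classical formula.
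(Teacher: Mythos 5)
The paper offers no proof of this proposition at all: it is quoted verbatim as Corollary 8 of Sawin \cite{Sawin03} (a result going back to Andersen--Paradowski and the Kac--Walton formula), so there is no argument of the paper's to compare yours against line by line. Your sketch is the standard derivation of that cited result and is correct in outline: classical Brauer--Klimyk/Racah--Speiser in its extended (signed, $\rho$-shifted) form, composed with the affine-Weyl truncation of the classical tensor product, with the sign characters of $W=\langle\tau_1,\tau_2\rangle$ and of $\mathfrak{W}_0$ matching up and the cosets $W\sigma$ sweeping out $\mathfrak{W}_0$ exactly once. You are also right to flag the two genuine pressure points: the truncation statement (fusion equals affine-Weyl folding of classical multiplicities) is a theorem requiring either the $\widehat{\mathfrak{sl}}_3$ fusion product or the tilting-module/negligible-morphism machinery, and you defer it to \cite{Sawin03} --- which is in effect exactly what the paper does by citing the whole formula; and $\mathfrak{W}_0$ must be the non-extended affine Weyl group generated by the reflections in $T_1,T_2,T_3$ with its Coxeter length, which agrees with the paper's definition in Section 3.1. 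One small simplification: since $\eta\in\Lambda_0$ one has $1\leq\langle\eta+\rho,\alpha_i\rangle$ and $\langle\eta+\rho,\theta\rangle\leq k+2<k+3$, so $\eta$ is automatically regular for the shifted level-$(k+3)$ action and the alcove-wall case you propose to dispose of never actually arises here. In short, your proposal is a correct reconstruction of the proof the paper chose to import by citation, with the same essential external input.
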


\par As in Lemma 1 of \cite{Sawin06} this formula can be used to identify particular direct summands of tensor products of simple objects in $\mathcal{C}(\mathfrak{sl}_3,k)$.  Based on slight notational discrepancies in the Quantum Racah formula in \cite{Sawin06}, we provide a proof here based on that of Sawin's.

\begin{mylemma}[Sawin]\label{linkage}For any $\sigma$ in the classical Weyl group $\mathfrak{W}$, and any $\gamma,\lambda\in\Lambda_0$, if $\lambda+\sigma(\gamma)\in\Lambda_0$, then $\lambda\otimes\gamma$ contains $\lambda+\sigma(\gamma)$ as a direct summand with multiplicity one.\end{mylemma}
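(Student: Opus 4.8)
The approach is to apply the Quantum Racah formula (Proposition~\ref{quantumracah}) directly to $N^{\eta}_{\lambda,\gamma}$ with $\eta:=\lambda+\sigma(\gamma)$, and to show that the only term of the resulting alternating sum which survives is the one indexed by the identity of $\mathfrak{W}_0$. That term equals $m_\gamma(\eta-\lambda)=m_\gamma(\sigma(\gamma))$, and since $\gamma$ is dominant, $\sigma(\gamma)$ lies in the $\mathfrak{W}$-orbit of the highest weight of the classical simple module $V_\gamma$; such extremal weights have multiplicity one, so this contributes exactly $1$. Everything then reduces to showing $m_\gamma\big(\tau(\eta)-\lambda\big)=0$ for every $\tau\in\mathfrak{W}_0$ other than the identity.

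Two observations organize the rest. First, because $\lambda$, $\gamma$, and $\eta=\lambda+\sigma(\gamma)$ all lie in $\Lambda_0$, the weights $\eta$ and $\lambda$ lie strictly inside the fundamental alcove of $\mathfrak{W}_0$; in particular $\eta+\rho$ and $\lambda+\rho$ are regular dominant, and no $\tau\neq\mathrm{id}$ fixes $\eta$. Second, $m_\gamma(\mu)=0$ whenever $\mu\notin\mathrm{conv}(\mathfrak{W}\gamma)$, and every weight of $V_\gamma$ pairs with each coroot with absolute value at most $\langle\gamma,\theta\rangle\leq k$, which is strictly less than the translation modulus $k+3$ of $\mathfrak{W}_0$.

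I would split the remaining $\tau$ using the semidirect-product structure $\mathfrak{W}_0\cong\mathfrak{W}\ltimes(k+3)Q$. If $\tau$ has trivial translation part and is the $\rho$-shifted action of a nontrivial $w\in\mathfrak{W}$, then regularity of $\eta+\rho$ and $\lambda+\rho$ gives $\langle w(\eta+\rho),\lambda+\rho\rangle<\langle\eta+\rho,\lambda+\rho\rangle$, whence $\|w(\eta+\rho)-(\lambda+\rho)\|>\|(\eta+\rho)-(\lambda+\rho)\|=\|\sigma(\gamma)\|=\|\gamma\|$; so $\tau(\eta)-\lambda$ has strictly larger norm than any weight of $V_\gamma$ and the term vanishes. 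If $\tau$ is a pure nontrivial translation, then $\tau(\eta)-\lambda=\sigma(\gamma)+t$ with $t\neq 0$ in $(k+3)Q$; since every nonzero element of the $\mathfrak{sl}_3$ root lattice pairs with some coroot with absolute value at least $2$, the point $\sigma(\gamma)+t$ pairs with that coroot with absolute value at least $2(k+3)-k>k$, so it is not a weight of $V_\gamma$ and the term again vanishes.

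The delicate case, which I expect to be the main obstacle, is when $\tau$ has \emph{both} a nontrivial finite part and a nontrivial translation part; there the two estimates above must be combined, and one is left verifying that the bounded collection of root-lattice vectors compatible with the sizes of $\eta+\rho$, $\lambda+\rho$, $\gamma$ and with the $A_2$ root geometry never returns $\tau(\eta)-\lambda$ to $\mathrm{conv}(\mathfrak{W}\gamma)$ (or else that any stray contributions cancel in signed pairs). This is the sign-cancellation phenomenon underlying the Racah formula; it collapses here precisely because $\eta+\rho$ lies inside the fundamental alcove --- equivalently, because $\lambda+\sigma(\gamma)\in\Lambda_0$ and not merely dominant --- and I would carry out this step following Sawin's argument.
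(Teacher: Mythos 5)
Your reduction via the Quantum Racah formula and your evaluation of the identity term as $m_\gamma(\sigma(\gamma))=1$ match the paper, and your two completed cases (pure finite part via $\langle w(\eta+\rho),\lambda+\rho\rangle<\langle\eta+\rho,\lambda+\rho\rangle$, pure translation via coroot pairings exceeding $\langle\gamma,\theta\rangle\leq k$) are sound. But the case you defer --- $\tau\in\mathfrak{W}_0$ with nontrivial finite part \emph{and} nontrivial translation part --- is a genuine gap, not a technicality. In the shifted coordinates the affine reflection $\tau_3$ itself is such a mixed element ($\tau_3=t_{(k+3)\theta}\circ s_\theta$), so your two cases do not even cover the simple generators of $\mathfrak{W}_0$; and mixed elements really can contribute to Racah sums in nearby situations (compare Lemma \ref{dualtensor}, where adjacency to a wall produces $-1$ terms from simple reflections), so one cannot appeal to generic vanishing or to unproven sign cancellation. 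The appeal to ``cancellation in signed pairs'' is in any case not what happens: under the hypothesis $\lambda+\sigma(\gamma)\in\Lambda_0$ every non-identity term vanishes outright.

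The paper closes exactly this hole with one uniform geometric lemma that subsumes all three of your cases: if $\lambda\in\Lambda_0$ and $\lambda'\notin\Lambda_0$ lies in the $\mathfrak{W}_0$-orbit of $\lambda$, then $\|\lambda-\eta\|<\|\lambda'-\eta\|$ for every $\eta\in\Lambda_0$. This is proved by writing the element carrying $\lambda'$ to $\lambda$ as a reduced word in the generators $\tau_1,\tau_2,\tau_3$ and observing that each successive reflecting hyperplane separates the current point from $\eta$, so each application strictly decreases the distance to $\eta$; no semidirect-product case analysis is needed. Applying this with $\eta:=\lambda+\sigma(\gamma)$ gives $\|\tau(\eta)-\lambda\|>\|\eta-\lambda\|=\|\sigma(\gamma)\|=\|\gamma\|$ for every nontrivial $\tau\in\mathfrak{W}_0$, while $m_\gamma(\mu)\neq0$ forces $\|\mu\|\leq\|\gamma\|$; hence only the identity term survives. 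If you replace your case split by this distance-minimizing property of the alcove (or prove an equivalent statement covering mixed elements), your argument is complete.
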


\begin{proof}
\noindent Assume that $\lambda'\notin \Lambda_0$ is any weight conjugate to $\lambda\in \Lambda_0$ under the action of $\mathfrak{W}_0$.  Explicitly, there exists $(\tau_{i_1}\tau_{i_2}\cdots\tau_{i_t})\in\mathfrak{W}_0$ (written as a reduced expression in the generating simple reflections) such that
\begin{equation}(\tau_{i_1}\tau_{i_2}\cdots\tau_{i_t})(\lambda')=\lambda.\label{dagger}\end{equation}
Now let $\eta\in\Lambda_0$ be arbitrary.  The hyperplane of reflection corresponding to $\tau_{i_t}$ lies between $\lambda'$ and $\eta$ by assumption, so ${\|\tau_{i_t}(\lambda')-\eta\|<\|\lambda'-\eta\|}$.  Repeating this argument over all simple reflections in (\ref{dagger}) shows that
\begin{equation}\|\lambda-\eta\|<\|\lambda'-\eta\|.\label{dagger2}\end{equation}
\noindent With reference to the summands appearing in Proposition \ref{quantumracah}, assume that $m_\gamma(\tau(\lambda+\sigma(\gamma))-\lambda)\neq0$ for some non-trivial $\tau\in\mathfrak{W}_0$.  Then
\begin{equation}\|\tau(\lambda+\sigma(\gamma))-\lambda\|\leq\|\gamma\|\label{highest}\end{equation}
because $\gamma$ is heighest weight.  Since $\lambda+\sigma(\gamma)\in\Lambda_0$ and $\tau(\lambda+\sigma(\gamma))$ is not, (\ref{dagger2}) implies
\begin{equation*}\|\gamma\|=\|\sigma(\gamma)\|=\|(\lambda+\sigma(\gamma))-\lambda\|<\|\tau(\lambda+\sigma(\gamma))-\lambda\|\end{equation*}
contradicting the highest weight inequality in (\ref{highest}).  Thus $m_\gamma(\tau(\lambda+\sigma(\gamma))-\lambda)$ is possibly nonzero if and only if $\tau=\text{id}\in\mathfrak{W}_0$ and thus
\[N_{\lambda,\gamma}^{\lambda+\sigma(\gamma)}=(-1)^0m_\gamma((\lambda+\sigma(\gamma)-\lambda)=m_\gamma(\sigma(\gamma))=1.\]
\end{proof}
\par Even though the duality in $\mathcal{C}(\mathfrak{sl}_3,k)$ is clear for other reasons, its computation is straightforward from Lemma \ref{linkage}.
\begin{mycorr}\label{duality}
If $(m_1,m_2)\in\Lambda_0$, then $(m_1,m_2)^\ast=(m_2,m_1)$.
\end{mycorr}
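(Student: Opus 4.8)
The plan is to apply Sawin's Lemma \ref{linkage} with $\lambda=(m_1,m_2)$ and $\gamma=(m_2,m_1)$, taking for $\sigma$ the longest element $w_0$ of the classical Weyl group $\mathfrak{W}\cong S_3$. First I would record the standard fact that $-w_0$ is the order-two diagram automorphism of the $A_2$ Dynkin diagram, so that $w_0$ acts on the fundamental weights by $w_0(\varepsilon_1)=-\varepsilon_2$ and $w_0(\varepsilon_2)=-\varepsilon_1$; this can also be checked directly from $w_0=\tau_1\tau_2\tau_1$ (using the classical, not quantum, reflections) or from the characterization that $w_0$ sends every positive root to a negative one. It follows that
\[w_0(m_2\varepsilon_1+m_1\varepsilon_2)=-(m_1\varepsilon_1+m_2\varepsilon_2),\]
hence $\lambda+w_0(\gamma)=(0,0)$. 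Since the inequalities $m_i\geq0$ and $m_1+m_2\leq k$ cutting out $\Lambda_0$ are symmetric in $m_1,m_2$, both $\gamma=(m_2,m_1)$ and $(0,0)$ lie in $\Lambda_0$, so Lemma \ref{linkage} applies and shows that $(m_1,m_2)\otimes(m_2,m_1)$ contains $\1=(0,0)$ as a direct summand.

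To conclude, I would invoke rigidity: for simple objects $X,Y$ of any fusion category one has $\Hom(\1,X\otimes Y)\cong\Hom(X^\ast,Y)$, which is nonzero exactly when $Y\cong X^\ast$. Taking $X=(m_1,m_2)$ and $Y=(m_2,m_1)$ then forces $(m_1,m_2)^\ast=(m_2,m_1)$.

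I do not anticipate any real obstacle; the only point requiring care is the identification of the Weyl element, i.e.\ verifying that the longest element of $S_3$ implements $-1$ twisted by the diagram flip on $\mathfrak{h}^\ast$. Note that the multiplicity-one clause of Lemma \ref{linkage} is more than is needed here, since simplicity of $\1$ already forces $\Hom(\1,X\otimes Y)$ to be at most one-dimensional for simple $X,Y$; we only use that this Hom-space is nonzero.
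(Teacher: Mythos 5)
Your proof is correct and follows essentially the same route as the paper: both apply Lemma \ref{linkage} with $\gamma=(m_2,m_1)$ and $\sigma$ the longest element $\sigma_2\sigma_1\sigma_2$ of the classical Weyl group (which acts as $-1$ composed with the diagram flip, so $\lambda+\sigma(\gamma)=(0,0)$), and then conclude via adjointness of duality that $\Hom(\1,(m_1,m_2)\otimes(m_2,m_1))\neq0$ forces $(m_1,m_2)^\ast=(m_2,m_1)$. Your explicit verification of how $w_0$ acts on the fundamental weights is a welcome clarification of a step the paper states somewhat tersely.
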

\begin{proof}
Note that if $\mathcal{C}$ is a fusion category and $X,Y\in\mathcal{C}$ are simple, then by adjointness of duality $Y^\ast\simeq X$ if and only if
\[1=\dim_\mathbbm{k}\Hom(Y^\ast,X)=\dim_\mathbbm{k}\Hom(1,X\otimes Y).\]
Now if we denote the generating reflections $\sigma_1,\sigma_2\in\mathfrak{W}$, then \[(m_1,m_2)+(\sigma_2\sigma_1\sigma_2)(m_2,m_1)=-(m_1,m_2).\]  Thus $(m_1,m_2)+((m_1,m_2)+(\sigma_2\sigma_1\sigma_2)(m_2,m_1))=(0,0)$ and by Lemma 3.2.5, $(m_1,m_2)\otimes(m_2,m_1)$ contains $(0,0)$ with multiplicity one.
\end{proof}

\par To finish this numerical subsection we collect a formula for the multiplicative central charge of $\mathcal{C}(\mathfrak{g},k)$ \cite[Section 6.2]{DMNO} for future use.

\begin{mylemma}\label{centralcharge}
The multiplicative central charge of $\mathcal{C}:=\mathcal{C}(\mathfrak{g},k)$ is given by
\[\xi(\mathcal{C})=\exp\left(\dfrac{2\pi i}{8}\cdot\dfrac{k\dim\mathfrak{g}}{k+h^\vee}\right)\]
where $\dim\mathfrak{g}$ is the dimension of $\mathfrak{g}$ as a $\mathbb{C}$-vector space and $h^\vee$ is the dual Coxeter number of $\mathfrak{g}$
\end{mylemma}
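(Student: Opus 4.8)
The plan is to pass through the additive central charge. Recall from \cite[Section 8.15]{tcat} that $\xi(\mathcal{C})=\tau^{+}(\mathcal{C})/\sqrt{\dim(\mathcal{C})}$, where $\tau^{+}(\mathcal{C})=\sum_{X\in\mathcal{O}(\mathcal{C})}\theta_X\dim(X)^2$ is the Gauss sum of $\mathcal{C}$; since this is a root of unity (Lemma \ref{charge}(a)), writing $\xi(\mathcal{C})=\exp(2\pi i\,c/8)$ defines a real number $c$ modulo $8$, and the assertion is equivalent to the congruence $c\equiv k\dim\mathfrak{g}/(k+h^\vee)\pmod 8$ for $\mathcal{C}=\mathcal{C}(\mathfrak{g},k)$. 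I would establish this through the vertex operator algebra picture: by the construction of $\mathcal{C}(\mathfrak{g},k)$ from level-$k$ integrable $\hat{\mathfrak{g}}$-modules \cite{ap} (cf.\ also \cite{fink}) together with Huang's theorem that the module category of a rational $C_2$-cofinite VOA is modular, $\mathcal{C}(\mathfrak{g},k)$ is, as a modular tensor category, the representation category $\Rep L_{\hat{\mathfrak{g}}}(k,0)$ of the simple affine VOA at level $k$, with $T$-matrix $\mathrm{diag}(\theta_\lambda)$ of Lemma \ref{twist}.

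First I would record the general principle that the additive central charge of $\Rep V$ agrees modulo $8$ with the central charge $c_V$ of any rational $C_2$-cofinite VOA $V$. This comes from Zhu's modularity theorem: the vacuum character $\Tr_{V}\,q^{L_0-c_V/24}$ lies in the $SL_2(\mathbb{Z})$-module of torus one-point functions, on which the generator $\tau\mapsto\tau+1$ acts by $\exp(-2\pi i\,c_V/24)\,\mathrm{diag}(\theta_X)$ while $S$ acts by the (unitary) Kac--Peterson matrix; matching the modular anomaly relation $(\widetilde{s}\,t)^3\propto\widetilde{s}^2$ in the categorical and the character normalizations isolates the phase of $\tau^{+}(\mathcal{C})$ as $\exp(2\pi i\,c_V/8)$. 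Next I would invoke the Sugawara construction: the conformal vector of $L_{\hat{\mathfrak{g}}}(k,0)$ built from the normalized invariant form has central charge exactly $c_V=k\dim\mathfrak{g}/(k+h^\vee)$, a standard quadratic Casimir computation (see, e.g., Kac's book). Feeding this exact value into the previous paragraph yields the claimed value of $\xi(\mathcal{C}(\mathfrak{g},k))$.

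The main obstacle is the first of these two inputs — the passage between the categorical Gauss sum $\tau^{+}(\mathcal{C})$ and the conformal central charge $c_V$ — since it is exactly there that one must be careful with normalizing factors in order to pin the \emph{phase} and not merely $c$ up to a root of unity. For a self-contained argument in the $\mathfrak{sl}_3$ case one could instead avoid the VOA machinery entirely: substitute Lemma \ref{twist} and Proposition \ref{prop:dim} into $\tau^{+}(\mathcal{C})=\sum_{(m_1,m_2)\in\Lambda_0}\theta_{m_1,m_2}\dim(m_1,m_2)^2$, use the Weyl symmetry and the Weyl denominator identity to unfold the sum over the alcove $\Lambda_0$ into a quadratic Gauss sum over a finite quotient of the $\mathfrak{sl}_3$ weight lattice, and evaluate it by Milgram's formula; the delicate point there is to keep track of $\sqrt{\dim(\mathcal{C})}$, the order of the quotient, and the signature of the quadratic form so that the resulting phase is precisely $\exp(2\pi i k/(k+3))$. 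Since this formula is needed here only as a numerical invariant and is already recorded in \cite[Section 6.2]{DMNO}, I would present the VOA derivation and relegate the Gauss-sum computation to a remark.
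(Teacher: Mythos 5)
Your derivation is sound, but be aware that the paper itself offers no proof of this lemma: it is simply \emph{collected} from \cite[Section 6.2]{DMNO}, where the formula is recorded as standard, and it is used downstream only as a numerical invariant of Witt classes. What you have written is essentially the derivation that lies behind that citation, so the mathematics is the expected one: $\xi(\mathcal{C})=\tau^+(\mathcal{C})/\sqrt{\dim\mathcal{C}}$ (positive root, by pseudo-unitarity), identification of the categorical modular data with the Kac--Peterson character data, the anomaly comparison $T_{\mathrm{char}}=e^{-2\pi i c/24}\,\mathrm{diag}(\theta_\lambda)$ versus $(st)^3=\xi(\mathcal{C})\,s^2$, and the Sugawara value $c=k\dim\mathfrak{g}/(k+h^\vee)$. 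The one place where your write-up is heavier than necessary for this paper is the appeal to Huang's and Zhu's theorems: since the paper already works with the quantum-group realization of $\mathcal{C}(\mathfrak{g},k)$ via Finkelberg and \cite[Chapter 3.3]{BaKi} (indeed Lemma \ref{twist} is quoted from Theorem 3.3.20 there), the matching of the categorical $S$- and $T$-matrices with the Kac--Peterson matrices is already available in that reference, and the phase of the Gauss sum then follows from the $SL_2(\mathbb{Z})$ anomaly relation without invoking any vertex-operator-algebra modularity; this keeps the argument within sources the paper already relies on, whereas your VOA route buys the more general statement for an arbitrary rational $C_2$-cofinite $V$. Your cautionary remark about normalizations is well placed --- the equality of the normalized categorical $S$-matrix with the unitary Kac--Peterson matrix (not merely proportionality) and the choice of positive square root of $\dim\mathcal{C}$ are exactly the points that pin the phase --- and your proposed Milgram/Gauss-sum computation for $\mathfrak{sl}_3$ is a reasonable but unnecessary alternative given that the lemma is only quoted here.
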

\begin{note}
Refer to the introduction in \cite{Sawin06} for a complete list of dual Coxeter numbers.
\end{note}


\subsection{Fusion subcategories of $\mathcal{C}(\mathfrak{sl}_3,k)$}\label{subcats}

\par All fusion subcategories of $\mathcal{C}(\mathfrak{g},k)$ were classified by Sawin in Theorem 1 of \cite{Sawin06}.  For each level $k\in\mathbb{Z}_{>0}$, $\mathcal{C}(\mathfrak{sl}_3,k)$ has four fusion subcategories:
\begin{itemize}
\item[-] the trivial fusion subcategory consisting of $(0,0)$;
\item[-] the entire category $\mathcal{C}(\mathfrak{sl}_3,k)$;
\item[-] the subcategory consisting of weights $(m_1,m_2)\in\Lambda_0$ also in the root lattice.  The collection of such weights will be denoted $R_0$;
\item[-] the subcategory consisting of the weights $(0,0)$, $(k,0)$, and $(0,k)$, hereby called \emph{corner weights}.
\end{itemize}
The proof of this classification relies on two facts that will be used in the sequel.  We provide proofs here based on the original arguments found in \cite{Sawin06}, specialized to the case when $\mathfrak{g}=\mathfrak{sl}_3$ for clarity and instructive purposes.
\begin{mylemma}[Sawin]\label{dualtensor}
If a fusion subcategory $\mathcal{D}\subset\mathcal{C}(\mathfrak{sl}_3,k)$ for $k\geq2$ contains weight $\lambda$ that is not a corner weight then $\lambda\otimes\lambda^\ast$ contains $\theta$ as a direct summand.
\end{mylemma}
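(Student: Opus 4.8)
The plan is to use Lemma \ref{linkage} (Sawin's linkage lemma), which says that for any $\sigma$ in the classical Weyl group $\mathfrak{W}$ and any $\gamma,\lambda\in\Lambda_0$, if $\lambda+\sigma(\gamma)\in\Lambda_0$ then $\lambda\otimes\gamma$ contains $\lambda+\sigma(\gamma)$ as a direct summand. Applying this with $\gamma=\lambda$ and letting $\sigma$ range over $\mathfrak{W}$, the tensor product $\lambda\otimes\lambda$ (and hence, after dualizing appropriately via Corollary \ref{duality}, $\lambda\otimes\lambda^\ast$) contains every weight of the form $\lambda+\sigma(\lambda)$ that happens to lie in $\Lambda_0$. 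Since $\mathcal{D}$ is closed under tensor products, all such summands lie in $\mathcal{D}$, so it suffices to exhibit a sequence of elements of $\mathfrak{W}$ and intermediate weights, all forced to lie in $\Lambda_0$, that eventually produces $\theta=(1,1)$.

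Concretely, write $\lambda=(m_1,m_2)$ with $\lambda$ not a corner weight, meaning $\lambda$ is not one of $(0,0),(k,0),(0,k)$. First I would handle $\lambda\otimes\lambda^\ast$, which contains $(0,0)$ with multiplicity one (this is just the statement that $\lambda^\ast$ is the dual, as in the proof of Corollary \ref{duality}), together with the adjoint weight $\theta$ provided one can check $\lambda+\sigma(\lambda^\ast)=\theta$ for a suitable $\sigma\in\mathfrak{W}$ and that this weight lies in $\Lambda_0$ — but $\theta=(1,1)$ always lies in $\Lambda_0$ for $k\geq2$, so the only real content is the weight-combinatorics identity. The cleanest route is: the weights of the adjoint representation of $\mathfrak{sl}_3$ are the six roots together with $0$ (twice), so $m_\lambda(\mu)$-type bookkeeping is not even needed here; instead, I would argue directly that among the $\mathfrak{W}$-translates $\{\lambda+\sigma(\lambda^\ast):\sigma\in\mathfrak{W}\}$ at least one equals $\theta$ whenever $\lambda$ avoids the three corner weights. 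In coordinates, using that $\lambda^\ast=(m_2,m_1)$ and that $\mathfrak{W}\cong S_3$ acts by permuting the three "$\varepsilon$-type" coordinates of a weight written in the standard $\mathfrak{sl}_3$ basis, one computes the six translates explicitly and checks that $(1,1)$ appears, with the non-appearance cases corresponding exactly to $\lambda\in\{(0,0),(k,0),(0,k)\}$.

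I expect the main obstacle to be purely bookkeeping: pinning down the precise action of $\mathfrak{W}$ on the pairs $(m_1,m_2)$ and verifying the case analysis that $\theta$ appears as some $\lambda+\sigma(\lambda^\ast)\in\Lambda_0$ for every non-corner $\lambda$. In particular, one must be careful that the relevant translate genuinely lands inside the Weyl alcove $C_0$ (i.e. satisfies $m_1'+m_2'\leq k$), not merely in the dominant cone; this is where the hypothesis $k\geq2$ enters, since for $k=1$ the only non-corner dominant weights in $\Lambda_0$ are the corner weights themselves and the statement would be vacuous or false. A convenient way to organize this: split into the case $m_1=m_2$ (so $\lambda$ is self-dual and $\lambda\otimes\lambda$ already contains $\theta$ via a single reflection, using that $2m_1<k+?$ constraints force $\lambda$ small enough), and the case $m_1\neq m_2$ (so $\lambda+\sigma(\lambda^\ast)$ can be made equal to $\theta$ by choosing $\sigma$ to "cancel" the asymmetric parts). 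I would present the short chain of linkage applications and the explicit coordinate check, invoking closure of $\mathcal{D}$ under $\otimes$ at each step, and conclude $\theta\in\mathcal{O}(\mathcal{D})$.
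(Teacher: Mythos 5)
There is a genuine gap, and it is the central step: your claim that for every non-corner $\lambda$ some $\sigma\in\mathfrak{W}$ satisfies $\lambda+\sigma(\lambda^\ast)=\theta$ is false, and in fact it fails for almost every $\lambda$. Every element of $\mathfrak{W}$ is an isometry and $\|\lambda^\ast\|=\|\lambda\|$, so such a $\sigma$ can exist only if $\|\theta-\lambda\|=\|\lambda\|$, i.e.\ $\langle\lambda,\theta\rangle=\tfrac{1}{2}\|\theta\|^2=1$, i.e.\ $m_1+m_2=1$. Thus Lemma \ref{linkage} applied with $\gamma=\lambda^\ast$ can produce $\theta$ only for the two fundamental weights $(1,0)$ and $(0,1)$, not for all non-corner weights. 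A concrete counterexample is $\lambda=\theta=(1,1)$, which is a non-corner weight for every $k\geq2$: you would need $\sigma(\theta)=0$. More generally, for $\lambda=(m,m)$ one has $\theta-\lambda=(1-m)(\varepsilon_1+\varepsilon_2)$, of length $|1-m|\,\|\theta\|\neq m\,\|\theta\|=\|\lambda\|$, so the ``single reflection'' you invoke in the self-dual case does not exist. The underlying issue is that $\theta$ occurs in $\lambda\otimes\lambda^\ast$ through weights in the interior of the weight diagram of the classical module with highest weight $\lambda^\ast$, not through the extreme weights $\sigma(\lambda^\ast)$; Lemma \ref{linkage} only sees the extreme weights, so it cannot prove this statement on its own.

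Your fallback of iterating linkage through a chain of intermediate weights would, even if carried out, prove only that $\theta$ lies in the subcategory generated by $\lambda$, which is weaker than the lemma: the assertion is that $\theta$ is a direct summand of $\lambda\otimes\lambda^\ast$ itself, and that precise form is what the proof of Theorem \ref{maintheorem} uses via $F(\lambda)\otimes_AF(\lambda)^\ast=F(\lambda\otimes\lambda^\ast)\supseteq F(\theta)$. The paper instead computes the fusion coefficient directly: since $\theta$ is self-dual, $N_{\lambda,\lambda^\ast}^\theta=N_{\lambda,\theta}^\lambda$, and by Proposition \ref{quantumracah} this equals $\sum_{\tau\in\mathfrak{W}_0}(-1)^{\ell(\tau)}m_\theta(\tau(\lambda)-\lambda)$. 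The identity contributes $m_\theta(0)=2$; any $\tau$ of length at least two moves $\lambda$ so far that $\tau(\lambda)-\lambda$ is longer than every root and contributes nothing; and a simple reflection $\tau_i$ contributes $-1$ only when $\lambda$ is adjacent to the hyperplane $T_i$, which happens for at most one of $T_1,T_2,T_3$ precisely because $\lambda$ is not a corner. Hence $N_{\lambda,\theta}^\lambda\geq 2-1>0$. To salvage your route you would need weight-multiplicity information $m_{\lambda^\ast}(\mu)$ at non-extreme $\mu$, which is essentially the Racah-formula argument the paper already gives.
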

\begin{proof}
Since $\theta$ is self-dual by Lemma \ref{duality}, $N_{\lambda,\lambda^\ast}^\theta=N_{\lambda,\theta}^\lambda$ and by Proposition \ref{quantumracah}
\begin{equation}
N_{\lambda,\theta}^\lambda=\sum_{\tau\in\mathfrak{W}_0}(-1)^{\ell(\tau)}m_\theta(\tau(\lambda)-\lambda).\label{above1}
\end{equation}
If $\tau=\text{id}$ then the corresponding summand in (\ref{above1}) is $m_\theta(0)=2$, the rank of $\mathfrak{sl}_3$.  Now if the simple reflections $\tau_1,\tau_2,\tau_3$ are the generators of $\mathfrak{W}_0$, the reasoning leading to inequality (4) in the proof of Lemma \ref{linkage} implies if $i\neq j$
\begin{equation}
\|(\tau_i\tau_j)(\lambda)-\lambda\|>\|\tau_j(\lambda)-\lambda\|>0\label{above2}
\end{equation}
for $i,j=1,2,3$.  If $\tau_j(\lambda)-\lambda$ contributes to the sum in (\ref{above1}), then $\tau_j(\lambda)-\lambda$ must be a nonzero root.  But inequality (\ref{above2}) implies that any $\tau\in\mathfrak{W}_0$ whose reduced expression in terms of simple reflections has length greater than 1 causes $\tau(\lambda)-\lambda$ to be longer than any root, and hence does not contribute to the sum in (\ref{above1}).  Moreover, the only negative contributions to (\ref{above1}) come from simple reflections.

\par If a weight $\mu\in\Lambda_0$ is adjacent to any generating hyperplane $T_i$ for some $i=1,2,3$ ($\mu-\rho$ lies on $T_i$; see Figure \ref{fig:adjacent}), then $\|\tau_i(\mu)-\mu\|^2\leq2$ otherwise $\|\tau_i(\mu)-\mu\|^2>2$.  Thus $\tau_i(\mu)-\mu$ can contribute $-1$ to the sum in (\ref{above1}) if and only if $\mu$ is adjacent to the hyperplane $T_i$.  For $\mu\in\Lambda_0$ which are not corners, the number of adjacent generating hyperplanes adjacent to $\mu$ is at most 1, proving $N_{\lambda,\theta}^\lambda>0$.
\end{proof}
\begin{figure}[h!]
\centering
\begin{tikzpicture}[scale=2]
\fill [blue,opacity=.1] (0,0) -- (0,5*1.732/3) -- (5*1/2,2.5*1.732/3) -- cycle;

\draw[thick,->] (2,3*1.732/3) to (2.5,4.5*1.732/3) node[right] {$\|\tau_3(\mu)-\mu\|^2=2$};
\draw[thick,->] (2,3*1.732/3) to (2.75,0.75*1.732/3) node[below right] {$\|\tau_2(\mu)-\mu\|^2>2$};

\node at (0,0) {$\bullet$};
\node at (0,1.732/3) {$\bullet$};
\node at (0,2*1.732/3) {$\bullet$};
\node at (0,3*1.732/3) {$\bullet$};
\node at (0,4*1.732/3) {$\bullet$};
\node at (0,5*1.732/3) {$\bullet$};
\node at (1/2,0.5*1.732/3) {$\bullet$};
\node at (1/2,1*1.732/3+0.5*1.732/3) {$\bullet$};
\node at (1/2+0.12,1*1.732/3+0.5*1.732/3+0.195) {$\theta$};
\node at (1/2,2*1.732/3+0.5*1.732/3) {$\bullet$};
\node at (1/2,3*1.732/3+0.5*1.732/3) {$\bullet$};
\node at (1/2,4*1.732/3+0.5*1.732/3) {$\bullet$};
\node at (1,1*1.732/3) {$\bullet$};
\node at (1,2*1.732/3) {$\bullet$};
\node at (1,3*1.732/3) {$\bullet$};
\node at (1,4*1.732/3) {$\bullet$};
\node at (1.5,1*1.732/3+0.5*1.732/3) {$\bullet$};
\node at (1.5,2*1.732/3+0.5*1.732/3) {$\bullet$};
\node at (1.5,3*1.732/3+0.5*1.732/3) {$\bullet$};
\node at (2,2*1.732/3) {$\bullet$};
\node at (2,3*1.732/3) {$\bullet$};
\node at (2-0.165,3*1.732/3+0.165) {$\mu$};
\node at (2.5,2*1.732/3+0.5*1.732/3) {$\bullet$};

\foreach \i in {-1,...,2}
\foreach \j in {-1,...,4}
{
\draw[fill=black] (\i+1/2,\j*1.732/3+1.732/6) circle (0.01);
}

\foreach \i in {0,...,3}
\foreach \j in {-1,...,5}
{
\draw[fill=black] (\i,\j*1.732/3) circle (0.01);
}

\draw[->,dotted] (0,0) to (1,0) node[below right] {$\alpha_1$};
\draw[->,dotted] (0,0) to (1/2,1.732/2);
\draw[->,dotted] (0,0) to (-1/2,1.732/2) node[above left] {$\alpha_2$};

\draw[<->,dashed,thick] (-0.5,-1*1.732/3) to node[circle,draw=black,pos = 0.51,inner sep=1pt,fill=white,solid,thin] {$T_2$} (3,2.5*1.732/3);
\draw[<->,dashed,thick] (1,5*1.732/3) to node[circle,draw=black,pos = 0.15,inner sep=1pt,fill=white,solid,thin] {$T_3$} (3,3*1.732/3);

\end{tikzpicture}
    \caption{Adjacent vs. nonadjacent hyperplanes (level $k=5$)}%
    \label{fig:adjacent}%
\end{figure}
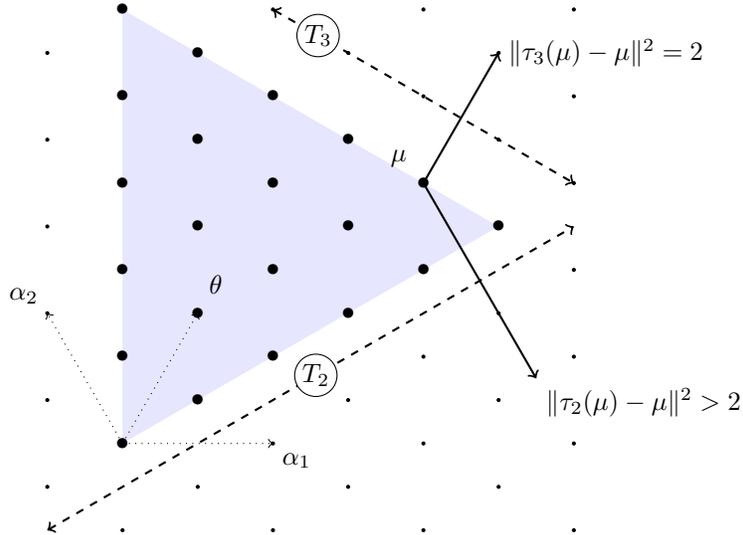
\begin{mylemma}[Sawin]\label{entirelattice}
If a fusion subcategory $\mathcal{D}\subset\mathcal{C}(\mathfrak{sl}_3,k)$ contains weight $\theta$ then $\mathcal{D}$ contains the entire root lattice in the Weyl alcove, $R_0$. 
\end{mylemma}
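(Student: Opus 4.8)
The plan is to exploit Sawin's linkage lemma (Lemma~\ref{linkage}) to show that, once $\theta$ is available, $\mathcal{D}$ is forced to contain every root-lattice weight in the alcove by ``walking'' through the weight diagram one root at a time. I would work in the coordinates $(m_1,m_2)$, where $\theta=(1,1)$ and $R_0$ is exactly the set of $(m_1,m_2)\in\Lambda_0$ with $m_1\equiv m_2\pmod 3$ (the condition characterizing the root lattice inside the weight lattice of $\mathfrak{sl}_3$). Since $\mathfrak{sl}_3$ is simply laced, the classical Weyl group $\mathfrak{W}$ acts transitively on its six roots $\pm\alpha_1,\pm\alpha_2,\pm\theta$, so $\{\sigma(\theta):\sigma\in\mathfrak{W}\}$ is precisely this set of roots. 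Applying Lemma~\ref{linkage} with $\gamma=\theta$ then gives the key consequence: for any $\mu\in\Lambda_0$ and any root $\beta$ with $\mu+\beta\in\Lambda_0$, the weight $\mu+\beta$ occurs (with multiplicity one) as a direct summand of $\mu\otimes\theta$; in particular, if $\mu\in\mathcal{D}$ then $\mu+\beta\in\mathcal{D}$, since $\theta\in\mathcal{D}$ by hypothesis and $\mathcal{D}$ is closed under tensor products and direct summands. (If $k=1$ there is nothing to prove, as then $\theta\notin\Lambda_0$; so I assume $k\geq2$, which guarantees $\theta\in\Lambda_0$ and hence makes Lemma~\ref{linkage} applicable with $\gamma=\theta$.)

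With this tool, I would prove by induction on the level $\langle\lambda,\theta\rangle=m_1+m_2$ that every $\lambda=(m_1,m_2)\in R_0$ lies in $\mathcal{D}$. The base case $\lambda=(0,0)$ holds because $(0,0)$ is the unit object of $\mathcal{C}(\mathfrak{sl}_3,k)$, hence lies in every fusion subcategory. For the inductive step, given $\lambda\neq(0,0)$: if $m_1\geq2$ put $\mu=\lambda-\alpha_1=(m_1-2,m_2+1)$; if instead $m_2\geq2$ put $\mu=\lambda-\alpha_2$; and otherwise $m_1,m_2\in\{0,1\}$, so the congruence $m_1\equiv m_2\pmod3$ forces $\lambda=(1,1)=\theta$, in which case I take $\mu=\lambda-\theta=(0,0)$. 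In each case $\mu$ again lies in $\Lambda_0$ and in the root lattice, so $\mu\in R_0$, and it has strictly smaller level, hence $\mu\in\mathcal{D}$ by the inductive hypothesis; since $\lambda=\mu+\beta$ for a root $\beta\in\{\alpha_1,\alpha_2,\theta\}$, the consequence of Lemma~\ref{linkage} above gives $\lambda\in\mathcal{D}$. This exhausts $R_0$, which is the desired conclusion.

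The only computations are the routine verifications that the descent step stays legal: $(m_1-2)+(m_2+1)=m_1+m_2-1\leq k$ keeps $\mu$ in the alcove, $(m_1-2)-(m_2+1)\equiv m_1-m_2\pmod3$ keeps it in the root lattice, and one notes that the unique nonzero element of $R_0$ with both coordinates at most $1$ is $\theta$ itself (this last point is what lets the third case close). I do not anticipate a serious obstacle; the one thing to watch is that the inductive descent never leaves $\Lambda_0$---the corner weights $(k,0)$ and $(0,k)$ are exactly the places where subtracting a simple root could fail---but the chosen descent (subtract $\alpha_1$ only when $m_1\geq2$, symmetrically for $\alpha_2$) avoids this, and in any event the corner weights other than $(0,0)$ lie in $R_0$ only when $3\mid k$ and are reached from the interior without difficulty.
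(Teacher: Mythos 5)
Your proposal is correct and uses essentially the same mechanism as the paper: apply Lemma \ref{linkage} with $\gamma=\theta$, using that the classical Weyl group acts transitively on the roots of $\mathfrak{sl}_3$, so that tensoring with $\theta$ lets $\mathcal{D}$ absorb any weight reachable from one of its members by a single root step inside $\Lambda_0$. The only (harmless) difference is that the paper asserts the existence of a root-step path from $\theta$ to an arbitrary element of $R_0$ within $\Lambda_0$, whereas you build such a walk explicitly from the unit $(0,0)$ by induction on $m_1+m_2$, verifying at each step that the intermediate weight stays in $\Lambda_0\cap R_0$.
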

\begin{proof}
If $\lambda\in R_0$ then there exists a path of length $n\in\mathbb{Z}_{\geq0}$ of weights $\theta=\lambda_0,\lambda_1,\lambda_2,\ldots,\lambda_n=\lambda$ in $\Lambda_0$ such that $\lambda_{i+1}-\lambda_i$ is a root for $0\leq i\leq n-1$.  We now proceed inductively on $i$ to show each $\lambda_i$ is in $\mathcal{D}$.  Assume $\lambda_i$ is in $\mathcal{D}$ for some $0\leq i\leq n-1$.  Since $\mathfrak{W}$ acts transitively on the roots there exists $\sigma_i\in\mathfrak{W}$ such that $\sigma_i(\lambda_0)=\lambda_{i+1}-\lambda_i$.  In other words $\lambda_i+\sigma_i(\lambda_0)=\lambda_{i+1}\in\Lambda_0$ and $\lambda_0\otimes \lambda_i$ contains $\lambda_{i+1}$ as a direct summand with multiplicity 1 by Lemma \ref{linkage}.
\end{proof}


\subsection{Prime decomposition when $3\nmid k$}\label{threedoesnotdividek}

\par In light of Proposition \ref{primedecomp} the categories $\mathcal{C}(\mathfrak{sl}_3,k)$ can be decomposed into a product of prime factors which we will use in the sequel when $3\nmid k$.   
\begin{mytheorem}\label{2dk}The following are decompositions of $\mathcal{C}(\mathfrak{sl}_3,k)$ into prime factors when $3\nmid k$:
\begin{enumerate}
\item[\textnormal{(a)}]$\mathcal{C}(\mathfrak{sl}_3,1)\simeq\mathcal{C}(\mathbb{Z}/3\mathbb{Z},q_\omega)$,
\item[\textnormal{(b)}]$\mathcal{C}(\mathfrak{sl}_3,2)\simeq\mathcal{C}(\mathfrak{sl}_3,2)'_\text{pt}\boxtimes\mathcal{C}(\mathfrak{sl}_3,2)_\text{pt}\simeq(\mathcal{C}(\mathfrak{sl}_2,3)'_\text{pt})^\text{rev}\boxtimes\mathcal{C}(\mathbb{Z}/3\mathbb{Z},q_{\omega^2})$,
\end{enumerate}
and for all $m\in\mathbb{Z}_{>0}$
\begin{enumerate}
\item[\textnormal{(c)}]$\mathcal{C}(\mathfrak{sl}_3,3m+1)\simeq\mathcal{C}(\mathfrak{sl}_3,3m+1)'_\text{pt}\boxtimes\mathcal{C}(\mathfrak{sl}_3,1)$,\text{ and }
\item[\textnormal{(d)}]$\mathcal{C}(\mathfrak{sl}_3,3m+2)\simeq\mathcal{C}(\mathfrak{sl}_3,3m+2)'_\text{pt}\boxtimes\mathcal{C}(\mathfrak{sl}_3,2)_\text{pt}$.
\end{enumerate}
\end{mytheorem}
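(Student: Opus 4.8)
The plan is to split off a nondegenerate pointed subcategory from $\mathcal{C}(\mathfrak{sl}_3,k)$ via M\"uger's decomposition and then recognize both factors. First I would identify the maximal pointed subcategory $\mathcal{C}(\mathfrak{sl}_3,k)_\text{pt}$. By Proposition \ref{prop:dim}, $\dim(k,0)=\dim(0,k)=1$, so the corner weights $(0,0),(k,0),(0,k)$ are invertible; they are closed under tensor product (Sawin's classification of fusion subcategories, recalled in Section \ref{subcats}), hence span a pointed subcategory with underlying group $\mathbb{Z}/3\mathbb{Z}$. This subcategory is all of $\mathcal{C}(\mathfrak{sl}_3,k)_\text{pt}$: the only fusion subcategories of $\mathcal{C}(\mathfrak{sl}_3,k)$ are the trivial one, the corner-weight subcategory, $R_0$, and the whole category; since $\mathcal{C}(\mathfrak{sl}_3,k)$ is not pointed for $k\geq2$ (Proposition \ref{prop:dim}) and, when $3\nmid k$, the weight $k\varepsilon_1$ is not in the root lattice so the corner-weight subcategory is not contained in $R_0$, the corner-weight subcategory is exactly $\mathcal{C}(\mathfrak{sl}_3,k)_\text{pt}$ (it is the whole category precisely when $k=1$). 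Lemma \ref{twist} gives $\theta_{k,0}=\theta_{0,k}=\exp(2\pi ik/3)$, so the resulting quadratic form on $\mathbb{Z}/3\mathbb{Z}$ is $q_\omega$ when $k\equiv1\pmod{3}$ and $q_{\omega^2}$ when $k\equiv2\pmod{3}$; in either case the associated bilinear form is nondegenerate (Example \ref{metric}), so $\mathcal{C}(\mathfrak{sl}_3,k)_\text{pt}$ is a nondegenerate subcategory. Part (a) is then immediate, for at $k=1$ every simple object is a corner weight, giving $\mathcal{C}(\mathfrak{sl}_3,1)\simeq\mathcal{C}(\mathbb{Z}/3\mathbb{Z},q_\omega)$, which is prime (indeed simple) since $\mathbb{Z}/3\mathbb{Z}$ has no proper nontrivial subgroup.

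For $k\geq2$ with $3\nmid k$, M\"uger's decomposition gives $\mathcal{C}(\mathfrak{sl}_3,k)\simeq\mathcal{C}(\mathfrak{sl}_3,k)_\text{pt}\boxtimes\mathcal{C}(\mathfrak{sl}_3,k)'_\text{pt}$ with both factors nondegenerate. I would then pin down the centralizer factor by elimination from Sawin's list: $\mathcal{C}(\mathfrak{sl}_3,k)'_\text{pt}$ is not the whole category (else $\mathcal{C}(\mathfrak{sl}_3,k)_\text{pt}$ would lie in the centralizer $\mathcal{C}(\mathfrak{sl}_3,k)'\simeq\Vep$), not the corner-weight subcategory (which, being nondegenerate and nontrivial, is not symmetric and so cannot equal its own centralizer), and not trivial (else $\mathcal{C}(\mathfrak{sl}_3,k)$ would be pointed); hence $\mathcal{C}(\mathfrak{sl}_3,k)'_\text{pt}=R_0$. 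The same list shows $R_0$ is simple: any fusion subcategory of $R_0$ is one of the four intersected with $R_0$, and for $3\nmid k$ the corner-weight subcategory meets $R_0$ only in the unit object, leaving just the trivial category and $R_0$. This settles the prime-factor claim in all cases, and substituting $\mathcal{C}(\mathfrak{sl}_3,k)_\text{pt}\simeq\mathcal{C}(\mathfrak{sl}_3,1)$ when $k\equiv1\pmod{3}$ and $\mathcal{C}(\mathfrak{sl}_3,k)_\text{pt}\simeq\mathcal{C}(\mathfrak{sl}_3,2)_\text{pt}$ when $k\equiv2\pmod{3}$ gives (c) and (d).

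It remains to describe $\mathcal{C}(\mathfrak{sl}_3,2)'_\text{pt}$ in (b). By the above it equals $R_0=\{(0,0),(1,1)\}$; Proposition \ref{prop:dim} gives $\dim(1,1)=2\cos(\pi/5)$, the golden ratio, and since $(1,1)$ is self-dual (Corollary \ref{duality}) and $R_0$ is closed under tensor product, a dimension count forces $(1,1)\otimes(1,1)\simeq(0,0)\oplus(1,1)$, while Lemma \ref{twist} gives $\theta_{1,1}=\exp(6\pi i/5)$. On the other side, $\mathcal{C}(\mathfrak{sl}_2,3)_\text{pt}$ is the order-two pointed subcategory generated by the simple object $3$; it is nondegenerate, so M\"uger's decomposition gives $\mathcal{C}(\mathfrak{sl}_2,3)\simeq\mathcal{C}(\mathfrak{sl}_2,3)_\text{pt}\boxtimes\mathcal{C}(\mathfrak{sl}_2,3)'_\text{pt}$ with $\mathcal{C}(\mathfrak{sl}_2,3)'_\text{pt}$ a rank-two braided fusion category on the simple objects $0$ and $2$, satisfying $2\otimes2\simeq0\oplus2$ and (from the $\mathfrak{sl}_2$ twist formula) $\theta_2=\exp(4\pi i/5)$. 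By the classification of rank-two premodular categories, a pseudo-unitary braided fusion category with two simple objects $\1,X$ and $X\otimes X\simeq\1\oplus X$ is determined up to braided equivalence by $\theta_X$, and passing to the reverse braiding inverts it; since $\exp(6\pi i/5)=\exp(4\pi i/5)^{-1}$ we conclude $\mathcal{C}(\mathfrak{sl}_3,2)'_\text{pt}\simeq(\mathcal{C}(\mathfrak{sl}_2,3)'_\text{pt})^\text{rev}$, and both factors in (b) are prime since a two-object fusion category has no proper nontrivial fusion subcategory. The main obstacle is precisely this last identification: it rests on the rank-two classification (equivalently, on explicit control of the associativity and braiding data), and one must follow the braiding reversal carefully so as to match $\theta_{1,1}$ with $\theta_2^{-1}$ rather than $\theta_2$; the rest is formal bookkeeping with M\"uger's decomposition and Sawin's list of fusion subcategories.
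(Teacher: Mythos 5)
Your proof is correct, and its overall skeleton matches the paper's: compute the corner-weight twists, identify the pointed part as $\mathcal{C}(\mathbb{Z}/3\mathbb{Z},q_\omega)$ or $\mathcal{C}(\mathbb{Z}/3\mathbb{Z},q_{\omega^2})$, split it off by M\"uger's decomposition, and use Sawin's classification of fusion subcategories to get simplicity (hence primeness) of the complementary factor. You diverge from the paper at two steps, both legitimately. First, to identify $\mathcal{C}(\mathfrak{sl}_3,k)'_\text{pt}$ the paper computes the $S$-matrix entries $s_{(0,k),(m_1,m_2)}$ and $s_{(k,0),(m_1,m_2)}$ from the fusion rules and twists and shows they equal $1$ exactly on $R_0$; you instead run an elimination through Sawin's four subcategories, which is cleaner but leans on two facts you correctly supply: nondegeneracy of the corner subcategory (via the nontrivial quadratic form on $\mathbb{Z}/3\mathbb{Z}$) and M\"uger's theorem to exclude a trivial centralizer. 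The paper's computation has the side benefit of exhibiting explicitly that the corners centralize precisely the root-lattice weights, but your route reaches the same conclusion. Second, for (b) the paper pins down the chirality of the rank-two factor by comparing multiplicative central charges ($\exp(13\pi i/10)$ on both sides), whereas you compare the twist of the nontrivial object directly, $\theta_{1,1}=e^{6\pi i/5}=(e^{4\pi i/5})^{-1}=\theta_2^{-1}$, invoking the rank-two (Fibonacci) classification and the fact that reversing the braiding inverts the twist; since the central charge is a Gauss sum built from exactly this data, the two criteria carry the same information, and both rest on the same citation to Ostrik's rank-two classification that the paper uses to know there are only the two candidate braided structures.
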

\begin{note}
Refer to Example \ref{metric} for the definitions of $q_\omega$ and $q_{\omega^2}$.
\end{note}
\begin{proof}
\par We begin by computing the twists (Section \ref{modular}) of the corner weights.  By Lemma \ref{twist},
\begin{align*}
\theta_{0,k}=\theta_{k,0}=&\exp\left(\frac{0^2+3(0)+(0)(k)+3k+k^2}{3(k+3)}\cdot2\pi i\right) \\
&=\exp\left(2k\pi i/3\right).
\end{align*}
Thus if $k\equiv1\pmod{3}$ $\theta_{0,k}=\theta_{k,0}=\omega$ and if $k\equiv2\pmod{3}$ then $\theta_{0,k}=\theta_{k,0}=\omega^2$.
\par The category $\mathcal{C}(\mathfrak{sl}_3,1)$ is pointed with 3 simple objects, and so it is determined by its twists found above.  This identifies $\mathcal{C}(\mathfrak{sl}_3,1)\simeq\mathcal{C}(\mathbb{Z}/3\mathbb{Z},q_\omega)$ which is simple, proving (a).

\par For level $k=2$ we first apply M\"uger's decomposition (Section \ref{subcategories}) and notice that $\mathcal{C}(\mathfrak{sl}_3,2)_\text{pt}\simeq\mathcal{C}(\mathbb{Z}/3\mathbb{Z},q_{\omega^2})$ based on the twist computations above. Its centralizer has 2 simple objects and is not pointed.  Thus $\mathcal{C}(\mathfrak{sl}_3,2)'_\text{pt}$ is either equivalent to $\mathcal{C}(\mathfrak{sl}_2,3)'_\text{pt}$ or $(\mathcal{C}(\mathfrak{sl}_2,3)'_\text{pt})^\text{rev}$ \cite[Section 6.4]{DMNO}\cite{ostrik}.  Using the formula found in Section 6.4 (1) of \cite{DMNO} we see
\begin{equation*}\xi\left(\mathcal{C}(\mathfrak{sl}_2,3)'_\text{pt}\right)=\exp\left(\dfrac{2\pi i}{8}\left(\dfrac{3\cdot3}{3+2}+(-1)^{(3+1)/2}\right)\right)=\exp(7\pi i/10),\end{equation*}
and thus by Lemma \ref{charge} (c),
\begin{equation*}\xi\left((\mathcal{C}(\mathfrak{sl}_2,3)'_\text{pt})^\text{rev}\right)=\exp(13\pi i/10).\end{equation*}
Using the Lemma \ref{charge} (b) we have
\begin{align*}
\xi\left(\mathcal{C}(\mathfrak{sl}_3,2)'_\text{pt}\right)&=\dfrac{\xi\left(\mathcal{C}(\mathfrak{sl}_3,2)\right)}{\xi\left(\mathcal{C}(\mathfrak{sl}_3,2)_\text{pt}\right)} \\
&=\dfrac{\exp(4\pi i/5)}{\exp(3\pi i/2)} \\
&=\exp(13\pi i/10),
\end{align*}
proving (b) since both of these categories are known to be simple.
\par The decompositions in parts (c) and (d) follow directly from M\"uger's decomposition along with parts (a) and (b), and we are left with proving simplicity.  For any $k\in\mathbb{Z}_{>0}$ the fusion subcategory of corner weights ($(0,0)$, $(k,0)$, and $(0,k)$) is pointed.  Proposition \ref{quantumracah} gives
\begin{align}
(0,k)\otimes(m_1,m_2)&=(m_2,k-m_1-m_2)\,\,\text{ and }\nonumber \\ 
(k,0)\otimes(m_1,m_2)&=(k-m_1-m_2,m_1).\label{fusionrules}
\end{align}
Thus using the well-known formula for computing the entries of the $S$-matrix \cite[Proposition 8.13.8]{tcat}
\begin{align*}
s_{(0,k),(m_1,m_2)}&=\dfrac{\theta_{m_2,3-m_1-m_2}}{\theta_{m_1,m_2}}=\exp\left(\dfrac{1}{3}(k-2m_1-m_2)\cdot2\pi i\right)\text{ and} \\
s_{(k,0),(m_1,m_2)}&=\dfrac{\theta_{3-m_1-m_2,m_1}}{\theta_{m_1,m_2}}=\exp\left(\dfrac{1}{3}(k-m_1-2m_2)\cdot2\pi i\right).
\end{align*}
This implies $s_{(0,k),(m_1,m_2)}=s_{(k,0),(m_1,m_2)}=1$ if and only if $m_1\equiv m_2\pmod{3}$, that is to say $(m_1,m_2)\in R_0$.  Having $s_{X,Y}=1$ is one of several equivalent conditions for $X$ and $Y$ to centralize one another \cite[Proposition 8.20.5]{tcat}.  Moreover if $3\nmid k$ then the corners $(0,k)$ and $(k,0)$ are not in the root lattice so by Sawin's classification of fusion subcategories (Section \ref{subcats}), the centralizers of the pointed subcategories are simple and thus prime.

\end{proof}

\par We now take a moment to compute the central charge of $\mathcal{C}(\mathfrak{sl}_3,k)'_\text{pt}$ for future use when $k=3m+1$ or $k=3m+2$ with $m\in\mathbb{Z}_{\geq0}$ based on (c), (d) of Proposition \ref{2dk} and the multiplicativity of $\xi$.

\begin{mycorr}\label{nonintegercharge}For $m\in\mathbb{Z}_{\geq0}$
\begin{enumerate}
\item[\textnormal{(a)}] when $k=3m+1$ ($m\neq0$)
\[\xi\left(\mathcal{C}(\mathfrak{sl}_3,k)'_\text{pt}\right)=\exp\left(\dfrac{9m}{6m+8}\pi i\right),\]
\item[\textnormal{(b)}] and when $k=3m+2$
\[\xi\left(\mathcal{C}(\mathfrak{sl}_3,k)'_\text{pt}\right)=\exp\left(\dfrac{3m-7}{6m+10}\pi i\right).\]
\end{enumerate}
\end{mycorr}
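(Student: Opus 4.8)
The plan is to combine the prime decompositions of Proposition~\ref{2dk}(c),(d) with the multiplicativity and reversal properties of the multiplicative central charge (Lemma~\ref{charge}(b),(c)), reducing everything to the known central charges of the three ingredient categories.

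First I would record $\xi\big(\mathcal{C}(\mathfrak{sl}_3,k)\big)$: substituting $\dim\mathfrak{sl}_3 = 8$ and $h^\vee = 3$ into Lemma~\ref{centralcharge} gives $\xi\big(\mathcal{C}(\mathfrak{sl}_3,k)\big) = \exp\!\big(2\pi i k/(k+3)\big)$. Next I need $\xi$ of the two pointed factors appearing in the decompositions. By Proposition~\ref{2dk}(a), $\mathcal{C}(\mathfrak{sl}_3,1)\simeq\mathcal{C}(\mathbb{Z}/3\mathbb{Z},q_\omega)$; evaluating the previous formula at $k=1$ (or, equivalently, computing the normalized Gauss sum $3^{-1/2}\sum_{x\in\mathbb{Z}/3\mathbb{Z}}q_\omega(x) = 3^{-1/2}(1+2\omega)$) yields $\xi\big(\mathcal{C}(\mathfrak{sl}_3,1)\big) = \exp(\pi i/2)$. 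The corner-weight twist computation inside the proof of Proposition~\ref{2dk} identifies $\mathcal{C}(\mathfrak{sl}_3,2)_\text{pt}\simeq\mathcal{C}(\mathbb{Z}/3\mathbb{Z},q_{\omega^2})$, which is the reverse of the previous category (negating the quadratic form reverses the braiding), so Lemma~\ref{charge}(c) gives $\xi\big(\mathcal{C}(\mathfrak{sl}_3,2)_\text{pt}\big) = \exp(-\pi i/2) = \exp(3\pi i/2)$.

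Then for $k = 3m+1$ with $m\neq 0$, Proposition~\ref{2dk}(c) together with Lemma~\ref{charge}(b) gives $\xi\big(\mathcal{C}(\mathfrak{sl}_3,k)'_\text{pt}\big) = \xi\big(\mathcal{C}(\mathfrak{sl}_3,3m+1)\big)/\xi\big(\mathcal{C}(\mathfrak{sl}_3,1)\big) = \exp\!\big(\pi i\,[\,2(3m+1)/(3m+4) - 1/2\,]\big)$, and clearing denominators over $2(3m+4)$ turns the bracketed quantity into $9m/(6m+8)$. Similarly, for $k = 3m+2$, Proposition~\ref{2dk}(d) gives $\xi\big(\mathcal{C}(\mathfrak{sl}_3,k)'_\text{pt}\big) = \xi\big(\mathcal{C}(\mathfrak{sl}_3,3m+2)\big)/\xi\big(\mathcal{C}(\mathfrak{sl}_3,2)_\text{pt}\big) = \exp\!\big(\pi i\,[\,2(3m+2)/(3m+5) - 3/2\,]\big)$, and clearing denominators over $2(3m+5)$ gives the bracketed quantity $(3m-7)/(6m+10)$, matching the statement.

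There is no genuine conceptual obstacle here; the only real work is bookkeeping. One must be sure to use the representative $q_{\omega^2}$ rather than $q_\omega$ for $\mathcal{C}(\mathfrak{sl}_3,2)_\text{pt}$ (pinned down by the corner-weight twist being $\omega^2$ when $k\equiv 2\pmod 3$), since the two choices differ by reversal and hence contribute reciprocal factors. One should also be mindful of the $2\pi i$-ambiguity in the exponent: if $\xi\big(\mathcal{C}(\mathfrak{sl}_3,2)_\text{pt}\big)$ is normalized as $\exp(-\pi i/2)$, the naive subtraction produces the exponent $(15m+13)/(6m+10)\cdot\pi i$, which differs from the asserted $(3m-7)/(6m+10)\cdot\pi i$ by exactly $2\pi i$ and must be reduced modulo full rotations. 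Finally, the hypothesis $m\neq 0$ in part (a) is forced simply because Proposition~\ref{2dk}(c) is stated only for $m\in\mathbb{Z}_{>0}$ (at $m=0$ one has $k=1$ and the factor $\mathcal{C}(\mathfrak{sl}_3,1)'_\text{pt}$ degenerates to $\Vep$).
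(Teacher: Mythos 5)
Your proposal is correct and follows exactly the route the paper intends: the paper derives Corollary \ref{nonintegercharge} directly from Proposition \ref{2dk}(c),(d) together with multiplicativity of $\xi$ (Lemma \ref{charge}) and the value $\xi(\mathcal{C}(\mathfrak{g},k))$ from Lemma \ref{centralcharge}, which is precisely your computation. Your bookkeeping, including the choice $\xi(\mathcal{C}(\mathfrak{sl}_3,2)_\text{pt})=\exp(3\pi i/2)$ and the $2\pi i$-reduction, reproduces the stated exponents $9m/(6m+8)$ and $(3m-7)/(6m+10)$ correctly.
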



\subsection{Simplicity of $\mathcal{C}(\mathfrak{sl}_3,k)_A^0$ when $3\mid k$}\label{threedividesk}

\par When $k=3m$ for some $m\in\mathbb{Z}_{>0}$, the object $A=(0,0)\oplus(3m,0)\oplus(0,3m)$ has the structure of a connected \'etale algebra and so we can consider the nondegenerate braided fusion category consisting of dyslectic $A$-modules $\mathcal{C}_A^0:=\mathcal{C}(\mathfrak{sl}_3,3m)_A^0$ (Section \ref{etale}).  The act of tensoring with $(3m,0)$ or $(0,3m)$ combinatorally results in a rotation of $\Lambda_0$ by 120 degrees counter-clockwise or clockwise, respectively, as illustrated in Figure \ref{fig:example5}.

\par There are two types of simple objects in $\mathcal{C}_A^0$:
\begin{itemize}
\item[-]\emph{Free} objects (Section 2.3) are of the form $F(\lambda)=\lambda\otimes A$ for $\lambda\in R_0$ not equal to $(m,m)$.  These objects are the sum of the objects in orbits of size three under the 120 degree rotations described above.
\item[-]Three \emph{stationary} objects are isomorphic to $(m,m)$ as objects of $\mathcal{C}(\mathfrak{sl}_3,3m)$, but non-isomorphic as $A$-modules.  If $\rho_1:(m,m)\otimes A\longrightarrow(m,m)$ is the action on one of these $A$-modules then the others have actions given by $\rho_\omega=\omega\rho_1$ and $\rho_{\omega^2}=\omega^2\rho_1$ where $\omega=\exp(2\pi i/3)$.
\end{itemize}

\par Denote any of these three stationary objects as $X\in\mathcal{C}^0_A$ or collectively as $X_1,X_2,X_3\in\mathcal{C}^0_A$.  At no point in what follows will it become important to distinguish their $A$-module structures and in fact doing so can lead to ambiguity in computations as illustrated in the $\mathfrak{sl}_2$ case described in Section 7 of \cite{KiO}.

\begin{example}\label{kequalsthree}
 When $k=3$ the only free object is the identity $F(0,0)$ and there are three stationary objects $X_1,X_2,X_3$ corresponding to the central weight $(1,1)$.  This category is pointed by Theorem 1.18 of \cite{KiO} which states that for $i=1,2,3$
\begin{equation}
\dim(X_i)=\dfrac{\dim(1,1)}{\dim(A)}=\dfrac{\sin^2\left(\dfrac{\pi}{3}\right)\sin\left(\dfrac{2\pi}{3}\right)}{3\sin\left(\dfrac{\pi}{3}\right)\sin^2\left(\dfrac{\pi}{6}\right)}=1.
\end{equation}
The simple objects of $\mathcal{C}_A^0$ form an abelian group of order four, which is either cyclic or the Klein-4 group.  But since the act of tensoring by $(0,3)$ and $(3,0)$ cyclically permutes $X_1,X_2,X_3$ we must have $\mathcal{C}_A^0\simeq\mathcal{C}(\mathbb{Z}/2\oplus\mathbb{Z}/2\mathbb{Z},q)$ with quadratic form $q:\mathbb{Z}/2\mathbb{Z}\oplus\mathbb{Z}/2\mathbb{Z}\longrightarrow\mathbb{C}^\times$ which is 1 on the unit object and $-1$ on the stationary objects.  This category is evidently not simple as $\mathbb{Z}/2\mathbb{Z}\oplus\mathbb{Z}/2\mathbb{Z}$ has many subgroups.
\end{example}

\begin{example}\label{ex:kequalssix}
We will examine the case of $k=6$ as it is also of great interest.  There are three stationary objects $X_1$, $X_2$, and $X_3$, and three free objects $Y_1=F(0,0)$, $Y_2=F(1,1)$, and $Y_3=F(3,3)$.  The tensor structure of the free module functor gives the fusion rules between the free objects:
\begin{align*}
Y_2\otimes_A Y_2&=Y_1\oplus2Y_2\oplus2Y_3\oplus X_1\oplus X_2\oplus X_3,& \\
Y_2\otimes_A Y_3&=2Y_2\oplus Y_3\oplus X_1\oplus X_2\oplus X_3,& \text{ and}\\
Y_3\otimes_A Y_3&=Y_1\oplus Y_2\oplus Y_3\oplus X_1\oplus X_2\oplus X_3.& \\
\end{align*}
For instance
\begin{align*}
Y_2\otimes_A Y_2&=F(1,1)\otimes_A F(1,1) \\
			&=F((1,1)\otimes(1,1)) \\
			&=F((0,0)\oplus(0,3)\oplus(3,0)\oplus(1,1)\oplus(1,1)\oplus(2,2)) \\
&=Y_1\oplus Y_3\oplus Y_3\oplus Y_2\oplus Y_2\oplus F(2,2) \\
&=Y_1\oplus2Y_2\oplus2Y_3\oplus X_1\oplus X_2\oplus X_3.
\end{align*}
Now to compute the remaining fusion rules notice that each $X_i$ is self dual since the act of tensoring with corner weights $(6,0)$ and $(0,6)$ cyclically permute the $X_i$.  Since it is evident at least one $X_i$ must be self dual, the orbit of this self dual object under the aforementioned cyclic permutation must also be self dual.

\par By comparing dimensions we must have
\begin{align}
Y_2\otimes_A X_i&=Y_2\oplus Y_3\oplus X_j\oplus X_k, \label{y2x}\\
Y_3\otimes_A X_i&=Y_2\oplus Y_3\oplus X_\ell,\label{y3x} 
\end{align}
and
\begin{equation}\label{fusionX}
X_r\otimes_A X_s=\left\{\begin{array}{lcr}
Y_1\oplus Y_3\oplus X_t & \text{ if } & r=s \\
Y_2\oplus X_u & \text{ if } & r\neq s
\end{array}\right.
\end{equation}
for some $j,k,\ell,t,u=1,2,3$.  We will now determine the unknown summands in (\ref{y2x}), (\ref{y3x}), and (\ref{fusionX}).  For instance since all objects are self dual, if $i\neq j$ by (\ref{fusionX})
\begin{equation*}
1=\dim_\mathbb{C}\Hom(Y_2,X_i\otimes_AX_j)=\dim_\mathbb{C}\Hom(X_i,Y_2\otimes_A X_j).
\end{equation*}
Hence $i,j,k$ are all distinct in (\ref{y2x}).  Similarly
\begin{equation*}
0=\dim_\mathbb{C}\Hom(Y_3,X_i\otimes_AX_j)=\dim_\mathbb{C}\Hom(X_i,Y_3\otimes_A X_j),
\end{equation*}
which implies $i=\ell$ in (\ref{y3x}) above.  For any $i,j=1,2,3$ denote the unknown summand in $X_i\otimes X_j$ by $X_{i,j}$.  We will show that if $X_i\neq X_{i,i}$ then the following equality cannot hold:
\begin{equation}3=\dim_\mathbb{C}\Hom(X_i\otimes_AX_i,X_i\otimes_AX_i)=\dim_\mathbb{C}\Hom(Y_1,X_i^{\otimes_A4}).\label{cont}\end{equation}
To see the contradiction note that $X_{i,i}^{\otimes_A3}=2Y_2\oplus Y_3\oplus2X_i\oplus X_{i,ii}$ where $X_{i,ii}$ is the unknown summand in the product $X_i\otimes X_{i,i}$.  Since our assumption implies $X_i\neq X_{i,i}^\ast$ then $X_i\neq X_{i,ii}$.  But this would imply $\dim_\mathbb{C}\Hom(X_i\otimes_AX_i,X_i\otimes_AX_i)=2$ by the above computation of $X_i^{\otimes_A3}$, contradicting (\ref{cont}).
\par Now to determine the remaining fusion rule in (\ref{fusionX}), computing $Y_2\otimes(X_i\otimes X_i)$ and $(Y_2\otimes X_i)\otimes X_i$ using (\ref{y2x}), (\ref{y3x}), and the first part of (\ref{fusionX}) shows that $X_{i,i}=X_i$, $X_{i,j}$, and $X_{j,k}$ are distinct.  By symmetry of this computation $X_{j,i}$, $X_{j,j}=X_j$, and $X_{j,k}$ as well as $X_{k,i}$, $X_{k,j}$, and $X_{k,k}=X_k$ are distinct triples as well.  This immeditely proves that $X_i\otimes X_j=Y_2\oplus X_k$ where $i,j,k$ are distinct and the fusion rules are computed completely.

\begin{note} $\mathcal{C}(\mathfrak{sl}_3,6)_A^0$ is simple.\end{note}

\par The $S$-matrix is now computed using Proposition 8.13.8 of \cite{tcat} in conjunction with the above fusion rules and the numerical data found in Section \ref{sec:moddata} to yield
\begin{equation*}
S=\left[\begin{array}{cccccc}
1 & \zeta+1 & \zeta & \epsilon & \epsilon & \epsilon \\
\zeta+1 & \zeta & -1 & -\epsilon & -\epsilon & -\epsilon \\
\zeta & -1 & -(\zeta+1) & \epsilon & \epsilon & \epsilon \\
\epsilon & -\epsilon & \epsilon & 2\epsilon & -\epsilon &-\epsilon  \\
\epsilon & -\epsilon & \epsilon &-\epsilon  & 2\epsilon &-\epsilon  \\
\epsilon & -\epsilon & \epsilon &-\epsilon  &-\epsilon  & 2\epsilon
\end{array}\right],
\end{equation*}
where $\zeta$ is the positive root of $x^3-3x^2-6x-1$ and $\epsilon$ is the greatest positive root of $x^3-3x^2+1$.  The $T$-matrix for $\mathcal{C}_A^0$ contains the same twists as the corresponding objects in $\mathcal{C}(\mathfrak{sl}_3,6)$:
\begin{equation*}
T=\left[\begin{array}{cccccc}
1 & 0 & 0 & 0 & 0 & 0 \\
0 & \omega & 0 & 0 & 0 & 0 \\
0 & 0 & \omega^2 & 0 & 0 & 0 \\
0 & 0 & 0 & \eta & 0 &0  \\
0 & 0 & 0 &0  & \eta &0  \\
0 & 0 & 0 &0  &0  & \eta
\end{array}\right],
\end{equation*}
where $\omega=\exp(2\pi i/3)$ and $\eta=\exp\left(2\pi i/9\right)$.

\begin{note}
This $S$-matrix was computed independently by Daniel Creamer applying algebro-geometric methods to the admissability criterion found for example in \cite{rowell} under the assumption that this category was self dual.  The computational software \emph{Kac} can be used to approximate the above $S$-matrix.
\end{note}
\begin{figure}[h!]
\centering
\begin{tikzpicture}[scale=1.5]
\fill [blue,opacity=.05] (0,0) -- (0,6*1.732/3) -- (3,3*1.732/3) -- cycle;

\draw[dotted,thick] (0,0) -- (0,6*1.732/3) -- (3,3*1.732/3) -- cycle;
\draw[dotted,thick] (1/2,1.732/3+0.5*1.732/3) -- (2,3*1.732/3) -- (1/2,4*1.732/3+0.5*1.732/3) -- cycle;
\draw[dotted, very thick] (0,3*1.732/3) -- (1.5,4*1.732/3+0.5*1.732/3) -- (1.5,1*1.732/3+0.5*1.732/3) -- cycle;

\node (c) at (0,0) {};
\node (b) at (0,6*1.732/3) {};
\node at (1,3*1.732/3) {\Large$\triangleright$};
\node (a) at (3,3*1.732/3) {};

\draw[thin] (3,5.3*1.732/3) rectangle (6,6.7*1.732/3);
\node at (4.5,5.666*1.732/3) {$\triangleright$  Stationary Objects};
\draw[dotted] (3.5,6.333*1.732/3) to (4,6.333*1.732/3);
\node at (4.93,6.333*1.732/3) {Free Objects};
\draw[bend right,->,out=-50,in=235] (a) edge node[fill=white,pos = 0.5] {$\otimes(6,0)$}  (b);
\draw[bend right,->,out=-50,in=235] (b) edge node[fill=white,pos = 0.5] {$\otimes(6,0)$}  (c);
\draw[bend right,->,out=-50,in=235] (c) edge node[fill=white,pos = 0.5] {$\otimes(6,0)$} (a);
\end{tikzpicture}
    \caption{$\mathcal{C}_A^0$ at level $k=6$, and the action of a corner weight}%
    \label{fig:example5}%
\end{figure}
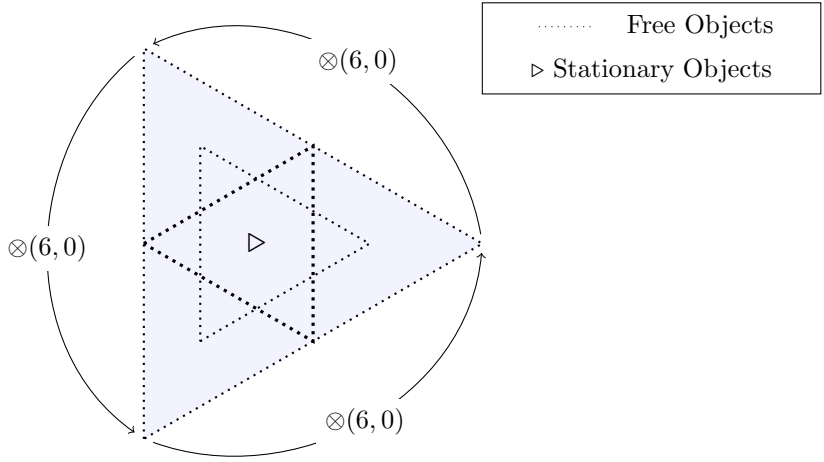
\end{example}

\begin{theo}\label{maintheorem}
The categories $\mathcal{C}_A^0:=\mathcal{C}(\mathfrak{sl}_3,3m)_A^0$ are simple for $m\geq2$.
\end{theo}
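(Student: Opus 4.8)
The plan is to prove that every simple object of $\mathcal{C}_A^0$ other than the unit $A=F(0,0)$ generates $\mathcal{C}_A^0$ as a fusion subcategory, so that $\mathcal{C}_A^0$ has no proper nontrivial fusion subcategory. Since the simples of $\mathcal{C}_A^0$ are the free objects $F(\lambda)$ ($\lambda\in R_0$ not a corner weight, $\lambda\neq(m,m)$) together with the three stationary objects $X_1,X_2,X_3$, there are two families to treat. I would handle the free objects first. Suppose a fusion subcategory $\mathcal{D}\subseteq\mathcal{C}_A^0$ contains a free object $F(\lambda)$. Since $F:\mathcal{C}(\mathfrak{sl}_3,3m)\to\mathcal{C}(\mathfrak{sl}_3,3m)_A$ is a tensor functor, $F(\lambda)\otimes_A F(\lambda)^\ast=F(\lambda\otimes\lambda^\ast)$ lies in $\mathcal{D}$, and Lemma \ref{dualtensor} (applicable since $k=3m\geq 6$ and $\lambda$ is not a corner weight) shows $\lambda\otimes\lambda^\ast$ contains $\theta=(1,1)$ as a summand; as $m\geq2$, the weight $(1,1)$ is neither a corner weight nor $(m,m)$, so $F(\theta)$ is a nontrivial free object and $F(\theta)\in\mathcal{D}$. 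Then I would transport the argument of Lemma \ref{entirelattice} into $\mathcal{D}$: for $\mu\in R_0$ choose a path $\theta=\lambda_0,\lambda_1,\dots,\lambda_n=\mu$ in $\Lambda_0$ with each $\lambda_{i+1}-\lambda_i$ a root, and show by induction on $i$ that every simple constituent of $F(\lambda_i)$ lies in $\mathcal{D}$, the inductive step using $F(\lambda_i)\otimes_A F(\theta)=F(\lambda_i\otimes\theta)\in\mathcal{D}$ together with Lemma \ref{linkage}, which produces $\lambda_{i+1}=\lambda_i+\sigma(\theta)$ as a multiplicity-one summand of $\lambda_i\otimes\theta$ for a suitable $\sigma\in\mathfrak{W}$. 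Letting $\mu$ range over $R_0$ shows $\mathcal{D}$ contains all free objects, and applying it to $\mu=(m,m)$ along the path $(1,1),(2,2),\dots,(m,m)$, which lies in $\Lambda_0$ because $\langle(j,j),\theta\rangle=2j\leq 3m$ for $j\leq m$, shows $\mathcal{D}$ contains every simple constituent of $F(m,m)=X_1\oplus X_2\oplus X_3$, hence the three stationary objects. Thus $\mathcal{D}=\mathcal{C}_A^0$, and in particular the free objects alone generate $\mathcal{C}_A^0$.

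It remains to exclude a proper fusion subcategory $\mathcal{D}$ containing a stationary object $X_i$. By the previous paragraph such a $\mathcal{D}$ can contain no free object, so $\mathcal{O}(\mathcal{D})\subseteq\{A,X_1,X_2,X_3\}$ and in particular $X_i\otimes_A X_i^\ast$ is a sum of copies of $A$ and of the $X_\ell$. Since $X_i$ is simple, $A$ occurs there with multiplicity one; writing $X_i\otimes_A X_i^\ast=A\oplus\bigoplus_\ell q_\ell X_\ell$, comparing categorical dimensions, and using $\dim X_\ell=\dim X_i=\dim(m,m)/\dim(A)=\dim(m,m)/3$ for every $\ell$ (Theorem 1.18 of \cite{KiO}; compare Example \ref{kequalsthree}), one finds that $Q:=\sum_\ell q_\ell$ must satisfy $\dim(X_i)^2=1+Q\dim(X_i)$. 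Equivalently, $\dim(m,m)=3\dim(X_i)$ would be a root of $x^2-3Qx-9$ for some $Q\in\mathbb{Z}_{\geq0}$, hence would lie in a field of degree at most $2$ over $\mathbb{Q}$ with minimal polynomial of that special shape. To finish, I would rule this out using the specialization $\dim(m,m)=\frac{3\sqrt3}{8\sin(2\pi/(3m+3))\sin^2(\pi/(3m+3))}$ of Proposition \ref{prop:dim}: one checks $[\mathbb{Q}(\dim(m,m)):\mathbb{Q}]>2$ for every $m\geq2$ with $m\neq3$ (already for $m=2$, $\dim(X_i)$ is a root of the irreducible cubic $x^3-3x^2+1$), while for $m=3$, $\dim(m,m)=9+6\sqrt3$ has minimal polynomial $x^2-18x-27\neq x^2-3Qx-9$. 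This contradiction completes the proof.

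The step I expect to be the main obstacle is this last arithmetic point, namely obtaining uniform control over the minimal polynomial of $\dim(m,m)$ so as to keep $\dim(X_i)=\dim(m,m)/3$ off the list of ``metallic'' numbers $\tfrac12(Q+\sqrt{Q^2+4})$; everything before it is a transcription of Sawin's analysis of fusion subcategories of $\mathcal{C}(\mathfrak{sl}_3,k)$ (Lemmas \ref{linkage}, \ref{dualtensor}, \ref{entirelattice}) through the free-module functor. Finally, the hypothesis $m\geq2$ enters the argument twice and for different reasons: it makes $(1,1)$ distinct from the central weight $(m,m)$, so that $F(\theta)$ is a nontrivial free object, and it forces $\dim(X_i)$ to be $>1$ and away from the metallic numbers — both of which fail at $m=1$, where indeed $\mathcal{C}(\mathfrak{sl}_3,3)_A^0\simeq\mathcal{C}(\mathbb{Z}/2\mathbb{Z}\oplus\mathbb{Z}/2\mathbb{Z},q)$ is not simple.
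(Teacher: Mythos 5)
Your first step (a fusion subcategory containing a nontrivial simple free object must be all of $\mathcal{C}_A^0$) is essentially the paper's argument: transport Lemmas \ref{dualtensor} and \ref{entirelattice} through the tensor functor $F$, and note that every simple object, stationary ones included, is a summand of some $F(\mu)$ with $\mu\in R_0$. The divergence, and the problem, is in the second step. After reducing to $\mathcal{O}(\mathcal{D})\subseteq\{A,X_1,X_2,X_3\}$ and deriving $\dim(X_i)^2=1+Q\dim(X_i)$ for some $Q\in\mathbb{Z}_{\geq0}$, your contradiction rests entirely on the claim that $[\mathbb{Q}(\dim(m,m)):\mathbb{Q}]>2$ for every $m\geq2$ with $m\neq3$. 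You verify this only for $m=2$ and dispose of $m=3$ by hand; for general $m$ it is asserted, not proved, and you yourself flag it as the main obstacle. Since $\dim(X_i)\to\infty$, no size estimate alone can rule out the metallic values $\tfrac12(Q+\sqrt{Q^2+4})$ once $Q$ is unbounded, so without a uniform Galois-theoretic argument about $\tfrac{\sqrt3}{8\sin(2\pi/(3m+3))\sin^2(\pi/(3m+3))}$ (its conjugates under $q\mapsto q^j$, or the degree of the relevant real cyclotomic subfield it generates) the proof is not complete. This is a genuine gap, not a routine verification.

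The paper closes exactly this hole by a different mechanism: Lemma \ref{mplusone} computes $N_{\nu,\nu}^{\nu}=m+1$ for the central weight $\nu=(m,m)$ (quantum Racah plus Kostant's multiplicity formula), which bounds the total stationary multiplicity in $X\otimes_A X^\ast$ by $m+1$ and hence gives $\dim(X)=\tfrac12\bigl(n+\sqrt{n^2+4}\bigr)\leq m+3$; this linear bound is then contradicted by the cubic lower bound $\dim(X)>\tfrac{27\sqrt3(m+1)^3}{16\pi^3}$ for $m\geq3$, with $m=2$ settled by the explicit fusion rules of Example \ref{ex:kequalssix}. Your argument becomes correct if you import that multiplicity bound: with $Q\leq m+1$ the metallic identity forces $\dim(X_i)\leq m+3$, and the same growth estimate finishes the proof with no number theory about minimal polynomials needed. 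As written, however, the decisive arithmetic input is missing.
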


\begin{proof}
\par Assume that $\mathcal{D}\subset\mathcal{C}_A^0$ is a fusion subcategory containing a non-trivial simple  free object $F(\lambda)$ for some $\lambda\in R_0$.  As noted in Section \ref{subcategories} the fusion subcategory $\mathcal{D}$ must also contain $F(\lambda)^\ast$.  Lemma \ref{dualtensor} implies $\lambda\otimes \lambda^\ast$ contains $\theta$ as a summand.  So by the tensor structure of $F$ (Section \ref{etale}),
\[F(\lambda)\otimes_A F(\lambda)^\ast=F(\lambda)\otimes_A F(\lambda^\ast)= F(\lambda\otimes\lambda^\ast)\] which implies $F(\lambda)\otimes_A F(\lambda)^\ast$ contains $F(\theta)$ as a summand.  Finally Lemma \ref{entirelattice} implies that there exists an $n\in\mathbb{Z}_{>0}$ such that $\theta^n$ contains $\mu$ as a direct summand for any  $\mu\in R_0$.  Hence by the above argument using the tensor structure of $F$, $F(\theta)^n$ will contain $F(\mu)$ as a direct summand.  In this case we have proven $\mathcal{D}=\mathcal{C}_A^0$ since all simple objects are direct summands of free objects (Section \ref{etale}).

\par The only case that remains is if the fusion subcategory $\mathcal{D}$ only contains stationary object(s) $X\in\mathcal{C}_A^0$ corresponding to the central weight $(m,m)$ which we will denote as $\nu$ for brevity.

\begin{mylemma}\label{mplusone}
In $\mathcal{C}(\mathfrak{sl}_3,3m)$ with $m\in\mathbb{Z}_{>0}$, we have $N_{\nu,\nu}^\nu=m+1$.
\end{mylemma}

\begin{proof}
\par Proposition \ref{quantumracah} gives
\begin{equation}N_{\nu,\nu}^\nu=\sum_{\tau\in\mathfrak{W}_0}(-1)^\tau m_\nu(\tau(\nu)-\nu)\label{qabove}.\end{equation}
For the simple reflections $\tau_1,\tau_2,\tau_3\in\mathfrak{W}_0$, $\|\tau_i(\nu)-\nu\|>\|\nu\|$ and by the reasoning leading to inequality (\ref{dagger2}) in the proof of Lemma \ref{linkage} only simple reflections can contribute to the sum in (\ref{qabove}).  Hence the only nonzero term in (\ref{qabove}) comes from the identity in $\mathfrak{W}_0$ and thus $N_{\nu,\nu}^\nu=m_\nu(0)$.  If $p(\mu)$ is the number of ways of writing a weight $\mu$ as a sum of positive roots, by Kostant's multiplicity formula \cite{Kostant58}
\begin{align*}
m_\nu(0)&=\sum_{\sigma\in\mathfrak{W}}(-1)^{\ell(\sigma)}p(\sigma((m+1)\alpha_1+(m+1)\alpha_2)-\alpha_1-\alpha_2) \\
		&=p(m\alpha_1+m\alpha_2)
\end{align*}
because the argument of $p$ is not dominant for any nontrivial elements of the Weyl group.  Now it suffices to note that since there are three positive roots, $\alpha_1,\alpha_2,\alpha_1+\alpha_2$, then $p(m\alpha_1+m\alpha_2)=m+1$ because to count the number of ways to write $m\alpha_1+m\alpha_2$ as a sum of positive roots is the same as choosing how many copies of $\alpha_1+\alpha_2$ to use (the number of $\alpha_1$'s and $\alpha_2$'s are then determined).
\end{proof}

\par To finish the proof of Theorem \ref{maintheorem}, we wish to show that some nontrivial simple free object appears as a summand of $X\otimes_AX$ ($\1$ is a summand of $X\otimes_AX^\ast$ as $\dim_\mathbb{C}\Hom(\1,X\otimes_AX^\ast)=\dim_\mathbb{C}\Hom(X,X)=1$). We have already shown above that this would imply the nontrivial simple free summand generates the entire category $\mathcal{C}_A^0$.

\par If $X\otimes_AX^\ast$ does \emph{not} contain a simple non-trivial summand different from $X$ then we must have
\[X\otimes_AX^\ast=\1\oplus nX\]
where $n\in\mathbb{Z}_{\geq0}$ and $n\leq m+1$ by Lemma \ref{mplusone}. Using the additivity and multiplicativity of dimension the above implies
\begin{equation*}\dim(X)^2-n\dim(X)-1=0,\end{equation*}
hence
\begin{equation}\dim(X)=\frac{n+\sqrt{n^2+4}}{2}\leq m+3.\label{inequal}\end{equation}
By Proposition \ref{prop:dim} and Theorem 1.18 of \cite{KiO},
\[\dim(X)=\dfrac{\sin^2\left(\dfrac{(m+1)\pi}{3(m+1)}\right)\sin\left(\dfrac{(2(m+1)\pi}{3(m+1)}\right)}{3\sin\left(\dfrac{2\pi}{3(m+1)}\right)\sin^2\left(\dfrac{\pi}{3(m+1)}\right)}=\dfrac{\sqrt{3}}{8\sin\left(\dfrac{2\pi}{3(m+1)}\right)\sin^2\left(\dfrac{\pi}{3(m+1)}\right)}.\]
But for $m\geq2$ the arguments of the above sines are are positive hence $\sin(x)<x$ and we have
\begin{equation*}\dim(X)>\dfrac{27\sqrt{3}(m+1)^3}{16\pi^3}\end{equation*}
which is strictly greater than $m+3$ for $m\geq3$, contradicting the inequality in (\ref{inequal}).  The case when $m=2$ was described explicitly in Example \ref{ex:kequalssix}.
\end{proof}


\section{Witt group relations}

\subsection{Modular Invariants and conformal embeddings}\label{sec:modcon}

\par Given a connected \'etale algebra $A$ in a modular tensor category $\mathcal{C}$ one can construct $Z_A\in\text{Mat}_n(\mathbb{Z}_{\geq0})$, the (symmetric) modular invariant associated with $A\in\mathcal{C}$ where $n=|\mathcal{O}(\mathcal{C})|$.  The matrix $Z_A$ commutes with the modular group action associated with the modular tensor category $\mathcal{C}$.  Refer to \cite{ostrik2003} for a detailed explanation of the passage from algebras to modular invariants.

\par As the rank of $\mathfrak{g}$ and the level $k$ increase without bound across all $\mathcal{C}(\mathfrak{g},k)$, computing and classifying these modular invariants from a numerical standpoint becomes an arduous task.  A complete and rigorous classification only exists for all levels $k$ in the $\mathfrak{g}=\mathfrak{sl}_2$ \cite{CIZ} and $\mathfrak{g}=\mathfrak{sl}_3$ \cite{gannon} cases, the latter being of particular interest to our second main result.

\begin{theorem}[Gannon]\label{gannon}
There are three familes of symmetric modular invariants for $\mathfrak{sl}_3$:
\begin{enumerate}
\item[] (Type A) For all levels $k$, the modular invariant associated with the trivial connected \'etale algebra $(0,0)$;
\item[] (Type D) For levels $k$ such that $3\mid k$, the modular invariant associated with the connected \'etale algebra $(0,0)\oplus(k,0)\oplus(0,k)$;
\item[] (Type E) At levels $k=5,9,21$, the modular invariants associated with exceptional \'etale algebras.
\end{enumerate}
\end{theorem}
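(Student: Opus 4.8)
The plan is to recast the statement as a finite arithmetic problem about the modular data of $\mathcal{C}(\mathfrak{sl}_3,k)$ and then squeeze the list of solutions using Galois symmetry. A symmetric modular invariant is a matrix $Z=(Z_{\lambda\mu})_{\lambda,\mu\in\Lambda_0}$ with $Z_{\lambda\mu}\in\mathbb{Z}_{\geq0}$, $Z_{00}=1$, $Z=Z^{\mathrm t}$, and $SZ=ZS$, $TZ=ZT$, where $S$ and $T$ are the matrices of $\mathcal{C}(\mathfrak{sl}_3,k)$ recorded in Section~\ref{sec:moddata}. Since $T$ is diagonal with entries the twists $\theta_\lambda$ up to an overall central-charge phase, commutation with $T$ is the \emph{parity rule} $Z_{\lambda\mu}\neq0\Rightarrow\theta_\lambda=\theta_\mu$; in particular $Z_{0\lambda}\neq0\Rightarrow\theta_\lambda=1$, which by Lemma~\ref{twist} is a single congruence on $m_1,m_2$ modulo $3(k+3)$. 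The multiset of \emph{exponents} $\mathcal E:=\{\lambda\text{ with multiplicity }Z_{0\lambda}\}$ will turn out to determine $Z$, so the heart of the matter is to show that $\mathcal E$ can only be one of the advertised shapes.

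First I would install the Coste--Gannon Galois symmetry. The entries of $S$ and $T$ lie in a cyclotomic field $\mathbb{Q}(\zeta_N)$ with $N$ built from $3(k+3)$, and for each $\ell$ coprime to $N$ the Galois automorphism $\sigma_\ell\colon\zeta_N\mapsto\zeta_N^{\ell}$ acts on the unitary $S$-matrix by $\sigma_\ell(S_{\lambda\mu})=\epsilon_\ell(\lambda)\,S_{\sigma_\ell(\lambda),\mu}$, where $\sigma_\ell$ is realized on labels by $\lambda+\rho\mapsto\ell(\lambda+\rho)$ followed by the quantum Weyl element folding the result back into the alcove, and $\epsilon_\ell(\lambda)\in\{\pm1\}$ is the sign of that Weyl element. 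Applying $\sigma_\ell$ to $SZ=ZS$ and using that $Z$ has rational entries yields $\epsilon_\ell(\mu)\,Z_{\lambda\mu}=\epsilon_\ell(\lambda)\,Z_{\sigma_\ell(\lambda),\sigma_\ell(\mu)}$ for all $\lambda,\mu$ and all such $\ell$. Two specializations are decisive: taking $\lambda=\mu=0$ gives $Z_{\sigma_\ell(0),\sigma_\ell(0)}=1$, so each label $\sigma_\ell(0)$ sits on the diagonal of $Z$ with coefficient $1$; and taking $\lambda=0$ and using positivity of the entries gives $Z_{0\lambda}\neq0\Rightarrow\epsilon_\ell(\lambda)=\epsilon_\ell(0)$ for every $\ell$. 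Thus every exponent must have the same sign profile $(\epsilon_\ell(\lambda))_\ell$ as the vacuum \emph{and} satisfy $\theta_\lambda=1$ --- two conditions that are jointly enormously restrictive.

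The core combinatorial claim is then that, for all but finitely many $k$, the only $\lambda\in\Lambda_0$ passing both the parity condition $\theta_\lambda=1$ and the Galois sign condition $\epsilon_\ell(\lambda)=\epsilon_\ell(0)$ for all $\ell$ coprime to $N$ are the corner weights: $(0,0)$ always, together with $(k,0)$ and $(0,k)$ precisely when $3\mid k$. I would prove this by tracking how the length, hence the sign $\epsilon_\ell$, of the element folding $\ell(\lambda+\rho)$ into the alcove varies as $\ell$ runs over a generating set of residues, and by bounding the quantum dimensions of exponents through a norm-type inequality (roughly $\sum_\lambda Z_{0\lambda}\dim(\lambda)=\dim(A)$ with $(\dim A)^2\leq\dim(\mathcal C)$ in the \'etale picture) that follows from $SZ=ZS$ and positivity. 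Once $\mathcal E$ is pinned down, $Z$ reconstructs from it: $Z_{0\lambda}=Z_{\lambda0}$ fixes the nonzero rows and columns, and $SZ=ZS$ then determines the remaining entries uniquely --- giving the identity invariant (Type A) when $\mathcal E=\{(0,0)\}$ and the $\mathbb{Z}/3$ simple-current invariant attached to $A=(0,0)\oplus(k,0)\oplus(0,k)$ (Type D) when $\mathcal E=\{(0,0),(k,0),(0,k)\}$, using that tensoring by a corner weight cyclically permutes the orbit as in Section~\ref{subcats}. For the finitely many remaining levels one lists the surviving $\lambda$ by hand; the only levels admitting anything beyond Types A and D are $k=5,9,21$, where the extra solutions assemble into the exceptional connected \'etale algebras of the conformal embeddings $\mathfrak{sl}_3\hookrightarrow\mathfrak{sl}_6$, $\mathfrak{sl}_3\hookrightarrow\mathfrak e_6$, and $\mathfrak{sl}_3\hookrightarrow\mathfrak e_7$ at level one (cf.\ Section~\ref{sec:modcon}), and one checks directly that these exhaust the symmetric invariants there.

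I expect the setup and the reconstruction of $Z$ from its exponents to be routine; essentially all the difficulty sits in the finiteness step. Showing that the joint parity/Galois constraint really does collapse to the simple-current orbit for large $k$ requires a genuinely nontrivial analysis of the sign functions $\epsilon_\ell$ and of the arithmetic of the residues $\ell(\lambda+\rho)\bmod 3(k+3)$ --- this is the substance of Gannon's argument --- followed by a finite but delicate case check at the small and exceptional levels, and finally a verification (via Ostrik's dictionary between connected \'etale algebras and ``physical'' modular invariants) that each surviving exponent multiset is actually realized by an \'etale algebra rather than merely by an automorphism-type invariant.
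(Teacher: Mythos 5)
First, note that the paper does not prove this statement at all: it is quoted as Gannon's classification (the citation \cite{gannon}), translated into the language of connected \'etale algebras, so there is no internal proof to compare against. What you have written is a sketch of Gannon's own strategy (the $T$-parity rule, the Coste--Gannon Galois symmetry $\epsilon_\ell(\mu)Z_{\lambda\mu}=\epsilon_\ell(\lambda)Z_{\sigma_\ell(\lambda)\sigma_\ell(\mu)}$, bounding the vacuum couplings, then reconstructing $Z$), and as an identification of the ingredients it is accurate. But as a proof it has two genuine gaps. The first is that the entire content of the theorem sits in the step you defer: showing that the joint parity/Galois-sign constraints force the vacuum row onto the simple-current orbit $\{(0,0)\}$ or $\{(0,0),(k,0),(0,k)\}$ for all but finitely many $k$, and then controlling the finitely many exceptional levels, is the long arithmetic analysis that occupies most of Gannon's paper; saying ``I would prove this by tracking how the sign of the folding element varies'' is a plan, not an argument, so the proposal does not establish the statement.

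The second gap is the reconstruction claim: it is not true that $Z_{0\lambda}$ together with $SZ=ZS$, $TZ=ZT$, positivity and $Z=Z^{\mathrm t}$ determines $Z$ uniquely. The charge-conjugation matrix $C_{\lambda\mu}=\delta_{\mu,\lambda^\ast}$ is a symmetric nonnegative-integer matrix with $C_{00}=1$ commuting with $S$ and $T$ (indeed $C=S^2$ up to normalization), and it has exactly the same vacuum row as the identity while being distinct from it, since conjugation is nontrivial on $\mathcal{C}(\mathfrak{sl}_3,k)$; at level $9$ the Moore--Seiberg exceptional causes a similar failure beyond the vacuum block. So under your reading of ``symmetric'' ($Z=Z^{\mathrm t}$) the asserted list is not even the right target, and under the paper's intended reading (invariants \emph{associated with connected \'etale algebras}, i.e.\ the block-diagonal or type-I invariants in Ostrik's dictionary) the passage between the matrix classification and the algebra classification cannot be relegated to a closing remark: you must separate automorphism-type invariants such as $C$ and the level-$9$ twist from extension-type ones, and verify which vacuum blocks are actually realized by commutative algebras (via the conformal embeddings $A_{2,5}\subseteq A_{5,1}$, $A_{2,9}\subseteq E_{6,1}$, $A_{2,21}\subseteq E_{7,1}$ for the exceptional cases). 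Repairing the argument therefore requires both carrying out the deferred parity/Galois analysis and replacing the uniqueness-of-reconstruction step with the type-I/type-II dichotomy.
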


\par Translating this into our discussion of Witt class representatives and the decompositions/reductions found in Sections \ref{threedoesnotdividek} and \ref{threedividesk}, we have the following corollary.
\begin{mycorr}\label{anisotropic}
The categories $\mathcal{C}(\mathfrak{sl}_3,3m+1)'_\text{pt}$ and $\mathcal{C}(\mathfrak{sl}_3,3m+2)'_\text{pt}$ are completely anisotropic for $m\in\mathbb{Z}_{>0}$ and $3m+2\neq5$, while $\mathcal{C}_A(\mathfrak{sl}_3,3m)_A^0$ is completely anisotropic for $m\in\mathbb{Z}_{\geq2}$ and $3m\neq9,21$.
\end{mycorr}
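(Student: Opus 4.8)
The plan is to derive everything from Gannon's classification (Theorem~\ref{gannon}) together with the decompositions of Proposition~\ref{2dk}, via the standard dictionary between connected \'etale algebras and symmetric modular invariants recalled in Section~\ref{sec:modcon}. Concretely, for a connected \'etale algebra $B$ in a modular tensor category $\mathcal{C}$ the invariant $Z_B$ satisfies $Z_{\1}=\mathrm{id}$, its vacuum row/column (as normalized in \cite{ostrik2003}) records the multiplicities $\dim\Hom(X,B)$ of the simple summands of $B$, and hence recovers $\dim(B)=\sum_X\dim\Hom(X,B)\dim(X)$. In particular $B\ne\1$ forces $Z_B\ne\mathrm{id}$, and if $A\subseteq B$ are connected \'etale with $Z_A=Z_B$ then $A\cong B$.

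For the first two families I would argue as follows. If $k=3m+1$ or $k=3m+2$ with $m\ge1$ and $3m+2\ne5$, then $k$ is coprime to $3$ and $k\notin\{5,9,21\}$ (a one-line congruence check), so Theorem~\ref{gannon} says the only symmetric modular invariant of $\mathcal{C}(\mathfrak{sl}_3,k)$ is the Type~A one, the identity. Thus $\mathcal{C}(\mathfrak{sl}_3,k)$ admits no connected \'etale algebra other than $\1$, i.e.\ it is completely anisotropic. Complete anisotropy is inherited by fusion subcategories---a connected \'etale algebra in a full tensor subcategory is connected \'etale in the ambient category---and $\mathcal{C}(\mathfrak{sl}_3,k)'_{\mathrm{pt}}$ is such a subcategory, so the claim follows. (Equivalently one may use the decompositions \textnormal{(c)},\textnormal{(d)} of Proposition~\ref{2dk}, noting that complete anisotropy passes to Deligne factors via $\mathcal{C}\cong\mathcal{C}\boxtimes\Vep\hookrightarrow\mathcal{C}\boxtimes\mathcal{E}$.)

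For the third family, with $A=(0,0)\oplus(3m,0)\oplus(0,3m)$, I would first invoke the fact (\cite{DMNO}) that the connected \'etale algebras of $\mathcal{C}_A^0$ correspond to the connected \'etale algebras $B$ of $\mathcal{C}=\mathcal{C}(\mathfrak{sl}_3,3m)$ equipped with an algebra map $A\to B$; consequently $\mathcal{C}_A^0$ is completely anisotropic if and only if $A$ is a maximal connected \'etale algebra of $\mathcal{C}$. So suppose $m\ge2$, $3m\notin\{9,21\}$, and $B\supsetneq A$ is a strictly larger connected \'etale algebra. Then $Z_B$ appears in Gannon's list at level $3m$; since $3m$ is a multiple of $3$ lying outside $\{9,21\}$ (and $\ne5$), the only entries available are Type~A and Type~D. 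It is not Type~A, since $\dim(B)\ge\dim(A)=3>1$; and it cannot be Type~D either, for then $Z_B=Z_A$ and the uniqueness recalled above gives $B\cong A$, contradicting $B\supsetneq A$. Hence $A$ is maximal, and $\mathcal{C}_A^0$ is completely anisotropic.

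The only genuinely non-bookkeeping point---and thus the step I would be most careful about---is the algebra/modular-invariant dictionary: that $Z_B$ determines $\dim(B)$ (equivalently, that the Type~D invariant at level $3m$ is realized by the \emph{unique} connected \'etale algebra $A$), and that complete anisotropy of $\mathcal{C}_A^0$ is equivalent to maximality of $A$. Both are folklore but hinge on the precise normalization of $Z_B$ in \cite{ostrik2003} and on the ``tower of \'etale algebras'' description of $\mathcal{C}_A^0$ in \cite{DMNO}, so I would quote the exact statements before using them. Everything else is a short verification that the excluded data---divisibility by $3$ and the levels $\{5,9,21\}$---are precisely the obstructions to Gannon's list reducing to ``Type~A only'' (respectively ``Type~A and Type~D only'').
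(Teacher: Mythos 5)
Your proposal is correct and is essentially the paper's own argument, which consists precisely of translating Gannon's classification through the \'etale-algebra/modular-invariant dictionary and the reductions of Sections~\ref{threedoesnotdividek} and~\ref{threedividesk}: at the levels in question only the Type A (resp.\ Type A and Type D) invariants occur, so the only connected \'etale algebras are $\1$ (resp.\ $\1$ and $A$), giving complete anisotropy of $\mathcal{C}(\mathfrak{sl}_3,k)'_{\text{pt}}$ and maximality of $A$, hence complete anisotropy of $\mathcal{C}(\mathfrak{sl}_3,3m)^0_A$. The only difference is bookkeeping: you pass complete anisotropy to the full braided subcategory $\mathcal{C}(\mathfrak{sl}_3,k)'_{\text{pt}}$ directly and use the correspondence in \cite{DMNO} between connected \'etale algebras in $\mathcal{C}_A^0$ and those in $\mathcal{C}$ containing $A$, whereas the paper instead invokes the classification of connected \'etale algebras in Deligne products from \cite{DNO}; both routes are legitimate, and your flagged caution about the normalization of $Z_B$ (the vacuum row recording $\dim\Hom(X,B)$) is exactly the point on which the dictionary rests.
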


\begin{note}This corollary depends on the classification of connected \'etale algebras in tensor products of nondegenerate braided fusion categories found in \cite{DNO}.\end{note}

\par In the cases $k=5,9,21$ we will use alternative methods to identify completely anisotropic representatives of the Witt classes of $\mathcal{C}(\mathfrak{sl}_3,k)$ for $k=5,9,21$.  In particular the theory of \emph{conformal embeddings} can be used to construct relations among the classes $[\mathcal{C}(\mathfrak{g},k)]$ for any finite dimensional simple Lie algebra $\mathfrak{g}$ \cite[Section 6.2]{DMNO}.  A complete classification of such conformal embeddings is given in \cite{bb} and \cite{sw}.

\par Each conformal embedding $\mathfrak{g}\subset\mathfrak{g}'$ gives rise to equivalences of the form $[\mathcal{C}(\mathfrak{g},k)]=[\mathcal{C}(\mathfrak{g}',k')]$ for some levels $k,k'\in\mathbb{Z}_{>0}$.  Three conformal embeddings are of interest for the classification of $\mathfrak{sl}_3$ relations: $A_{2,9}\subseteq E_{6,1}$, $A_{2,21}\subseteq E_{7,1}$, and $A_{2,5}\subseteq A_{5,1}$.

\par The category $\mathcal{C}(E_6,1)$ is pointed with three simple objects.  Using Proposition \ref{centralcharge} we find
\[\xi\left(\mathcal{C}(E_6,1)\right)=\exp\left(\dfrac{2\pi i}{8}\cdot\dfrac{1\cdot78}{1+12}\right)=-i.\]
Since pointed categories $\mathcal{C}(\mathbb{Z}/3\mathbb{Z},q)$ are determined by their central charge, we conclude
\[\mathcal{C}(E_6,1)\simeq\mathcal{C}(\mathbb{Z}/3\mathbb{Z},q_{\omega^2})\simeq\mathcal{C}(\mathfrak{sl}_3,2)_\text{pt},\]
which is simple and completely anisotropic.  Moreover
\[[\mathcal{C}(\mathfrak{sl}_{3},9)]=[\mathcal{C}(\mathfrak{sl}_3,2)_\text{pt}].\]

\par Similarly the category $\mathcal{C}(E_7,1)$ is pointed with two simple objects.  Using Proposition \ref{centralcharge} we find
\[\xi\left(\mathcal{C}(E_7,1)\right)=\exp\left(\dfrac{2\pi i}{8}\cdot\dfrac{1\cdot133}{1+18}\right)=\dfrac{1-i}{\sqrt{2}}.\]
Since pointed categories $\mathcal{C}(\mathbb{Z}/2\mathbb{Z},q)$ are determined by their central charge, we conclude
\[\mathcal{C}(E_7,1)\simeq\mathcal{C}(\mathbb{Z}/2\mathbb{Z},q_-),\]
where $q_-(1)=-i$, which is simple and completely anisotropic.  From Proposition \ref{centralcharge} we have
\[\xi\left(\mathcal{C}(\mathfrak{sl}_2,1)\right)=\exp\left(\dfrac{2\pi i}{8}\cdot\dfrac{1\cdot3}{1+2}\right)=\dfrac{1+i}{\sqrt{2}}.\]
From Lemma \ref{charge} (c) this implies
\[\xi\left(\mathcal{C}(\mathfrak{sl}_2,1)^\text{rev}\right)=\dfrac{1-i}{\sqrt{2}}\]
and moreover we conclude
\begin{equation}[\mathcal{C}(\mathfrak{sl}_{3},21)]=[(\mathcal{C}(\mathfrak{sl}_2,1)^\text{rev}].\label{level21}\end{equation}
Lastly $\mathcal{C}(\mathfrak{sl}_5,1)$ is pointed with 5 simple objects.  As noted in Example 6.2 of \cite{DMNO}, $\mathcal{C}(\mathfrak{sl}_n,1)\simeq\mathcal{C}(\mathbb{Z}/n\mathbb{Z},q)$ where $q(\ell)=\exp\left(\pi i\ell^2(n-1)/n)\right)$ and hence
\[[\mathcal{C}(\mathfrak{sl}_3,5)]=[\mathcal{C}(\mathfrak{sl}_5,1)]=[\mathcal{C}(\mathbb{Z}/5\mathbb{Z},q)].\]


\subsection{A Classification of Relations}\label{sec:class}

\begin{theo}\label{second}
The only relations in the Witt group of nondegenerate braided fusion categories $\mathcal{W}$ coming from the subgroup generated by $[\mathcal{C}(\mathfrak{sl}_3,k)]$ are the following:
\begin{align*}
\textnormal{(a)}\hspace{5 mm}&[\mathcal{C}(\mathfrak{sl}_3,1)]^4=[\Vep], \\
\textnormal{(b)}\hspace{5 mm}&[\mathcal{C}(\mathfrak{sl}_3,3)]^2=[\Vep], \\
\textnormal{(c)}\hspace{5 mm}&[\mathcal{C}(\mathfrak{sl}_3,5)]^2=[\Vep], \\
\textnormal{(d)}\hspace{5 mm}&[\mathcal{C}(\mathfrak{sl}_3,1)]^3=[\mathcal{C}(\mathfrak{sl}_3,9)],\text{ and}\\
\textnormal{(e)}\hspace{5 mm}&[\mathcal{C}(\mathfrak{sl}_3,21)]^8=[\Vep].
\end{align*}
\end{theo}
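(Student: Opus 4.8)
The plan is to first replace each $[\mathcal{C}(\mathfrak{sl}_3,k)]$ by a canonical expression in a fixed list of simple completely anisotropic categories, then to read off the five displayed relations, and finally to show that this list generates a group with \emph{exactly} those relations by splitting it into a pointed part (governed by the known structure of $\mathcal{W}_\text{pt}$) and an infinite-order non-pointed part (governed by the classification of connected \'etale algebras in tensor products). Concretely, M\"uger's decomposition (Proposition~\ref{2dk}) gives $[\mathcal{C}(\mathfrak{sl}_3,3m+1)]=[\mathcal{C}(\mathfrak{sl}_3,3m+1)'_\text{pt}]+[\mathcal{C}(\mathfrak{sl}_3,1)]$ and $[\mathcal{C}(\mathfrak{sl}_3,3m+2)]=[\mathcal{C}(\mathfrak{sl}_3,3m+2)'_\text{pt}]+[\mathcal{C}(\mathfrak{sl}_3,2)_\text{pt}]$, while passing to dyslectic modules (Section~\ref{witt}) gives $[\mathcal{C}(\mathfrak{sl}_3,3m)]=[\mathcal{C}(\mathfrak{sl}_3,3m)_A^0]$. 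By Theorem~\ref{maintheorem} with Corollary~\ref{anisotropic} and Proposition~\ref{2dk}(b), the categories $\mathcal{C}(\mathfrak{sl}_3,3m+1)'_\text{pt}$ ($m\geq1$), $\mathcal{C}(\mathfrak{sl}_3,3m+2)'_\text{pt}$ ($m\geq0$, $3m+2\neq5$), and $\mathcal{C}(\mathfrak{sl}_3,3m)_A^0$ ($m\geq2$, $3m\neq9,21$) are simple, completely anisotropic and non-pointed with trivial maximal pointed subcategory; I call these the \emph{non-pointed building blocks}. The remaining classes are pointed: $[\mathcal{C}(\mathfrak{sl}_3,1)]=[\mathcal{C}(\mathbb{Z}/3\mathbb{Z},q_\omega)]$, $[\mathcal{C}(\mathfrak{sl}_3,2)_\text{pt}]=[\mathcal{C}(\mathbb{Z}/3\mathbb{Z},q_{\omega^2})]=[\mathcal{C}(\mathfrak{sl}_3,1)]^{-1}$, $[\mathcal{C}(\mathfrak{sl}_3,3)]=[\mathcal{C}(\mathbb{Z}/2\mathbb{Z}\oplus\mathbb{Z}/2\mathbb{Z},q)]$ (Example~\ref{kequalsthree}), and the exceptional levels $k=5,9,21$ are each identified with a class in $\mathcal{W}_\text{pt}$ via the conformal embeddings of Section~\ref{sec:modcon}, in particular $[\mathcal{C}(\mathfrak{sl}_3,9)]=[\mathcal{C}(\mathfrak{sl}_3,1)]^{-1}$ and $[\mathcal{C}(\mathfrak{sl}_3,21)]=[\mathcal{C}(\mathfrak{sl}_2,1)^\text{rev}]$; I call these the \emph{pointed building blocks}. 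Thus the subgroup $\mathcal{G}\leq\mathcal{W}$ generated by all $[\mathcal{C}(\mathfrak{sl}_3,k)]$ is generated by the building blocks.

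Granting Step~1, the five relations are quick. Relation (a), and the consequence (d), reduce to the fact that $[\mathcal{C}(\mathbb{Z}/3\mathbb{Z},q_\omega)]$ has order $4$ in $\mathcal{W}_\text{pt}$ with inverse $[\mathcal{C}(\mathbb{Z}/3\mathbb{Z},q_{\omega^2})]=[\mathcal{C}(\mathfrak{sl}_3,9)]$, which follows from the classification of pointed Witt relations \cite{DGNO} (or from an isotropic-subgroup computation in $\mathcal{C}(\mathbb{Z}/3\mathbb{Z},q_\omega)^{\boxtimes4}$). Relations (b), (c), (e) amount to computing the order of the relevant pointed building block, which is determined by its multiplicative central charge (Lemma~\ref{centralcharge}) together with the structure of $\mathcal{W}_\text{pt}$; the governing facts are that $\mathcal{C}(\mathbb{Z}/2\mathbb{Z}\oplus\mathbb{Z}/2\mathbb{Z},q)$ is self-reverse with central charge $-1$ (and likewise the level-$5$ class), while $[\mathcal{C}(\mathfrak{sl}_2,1)^\text{rev}]$ has central charge a primitive $8$-th root of unity. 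The bulk of the work is then Step~3: showing $\mathcal{G}$ has presentation with generators $[\mathcal{C}(\mathfrak{sl}_3,k)]$ and defining relations exactly (a)--(e). Let $\mathcal{N}\leq\mathcal{G}$ be generated by the non-pointed building blocks and $\mathcal{P}\leq\mathcal{W}_\text{pt}$ by the pointed ones.

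First I would prove that $\mathcal{N}$ is free abelian on the non-pointed building blocks and $\mathcal{N}\cap\mathcal{W}_\text{pt}=\{0\}$. Since $\FP(\mathcal{C}(\mathfrak{sl}_3,k))$ is strictly increasing in $k$ and each non-pointed building block has Frobenius--Perron dimension $\tfrac13\FP(\mathcal{C}(\mathfrak{sl}_3,k))$ for a distinct $k$, no two of them, nor one of them and the reverse of another, are braided equivalent; and no non-pointed building block is reverse-equivalent to itself because, by the formulas of Lemma~\ref{centralcharge} and Corollary~\ref{nonintegercharge}, its central charge is never $\pm1$. A Deligne product of finitely many prime completely anisotropic non-pointed nondegenerate braided fusion categories, no two of which are braided reverse-equivalent, admits no nontrivial connected \'etale algebra by the classification of connected \'etale algebras in tensor products \cite{DNO}, hence is completely anisotropic; such a product therefore has nonzero Witt class unless it is trivial, and is non-pointed unless it is trivial. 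Applied to an arbitrary $\mathbb{Z}$-linear combination of non-pointed building blocks (negative exponents being interpreted via reverses), this forces every coefficient in any relation to vanish, giving freeness of $\mathcal{N}$, and it gives $\mathcal{N}\cap\mathcal{W}_\text{pt}=\{0\}$ because a nontrivial element of $\mathcal{N}$ has a completely anisotropic non-pointed representative, which cannot be braided equivalent to the pointed completely anisotropic representative of a class in $\mathcal{W}_\text{pt}$ (uniqueness, \cite{DMNO}). Hence $\mathcal{G}=\mathcal{N}\oplus\mathcal{P}$, and $\mathcal{P}\cong\mathbb{Z}/4\mathbb{Z}\oplus\mathbb{Z}/2\mathbb{Z}\oplus\mathbb{Z}/2\mathbb{Z}\oplus\mathbb{Z}/8\mathbb{Z}$ on its four generators by the prime decomposition of $\mathcal{W}_\text{pt}$ \cite{DGNO}, the two generators in the $2$-primary part being distinguished by the Frobenius--Perron dimensions of their completely anisotropic representatives. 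Finally, writing each $[\mathcal{C}(\mathfrak{sl}_3,k)]$ in $\mathcal{G}=\mathcal{N}\oplus\mathcal{P}$: every $k\notin\{1,3,5,9,21\}$ contributes a distinct free generator of $\mathcal{N}$ occurring in no other $[\mathcal{C}(\mathfrak{sl}_3,k')]$, so any relation among the $[\mathcal{C}(\mathfrak{sl}_3,k)]$ is supported on $k\in\{1,3,5,9,21\}$, and the submodule of relations among those five classes in $\mathbb{Z}/4\mathbb{Z}\oplus\mathbb{Z}/2\mathbb{Z}\oplus\mathbb{Z}/2\mathbb{Z}\oplus\mathbb{Z}/8\mathbb{Z}$ is precisely the one generated by (a)--(e).

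The main obstacle is the assertion inside Step~3 that a Deligne product of pairwise non-reverse-equivalent prime completely anisotropic \emph{non-pointed} categories is completely anisotropic: this is exactly where the full strength of the classification of connected \'etale algebras in tensor products from \cite{DNO} is needed, precisely as in the analogous $\mathfrak{sl}_2$ classification, and it is also the step that breaks down for pointed categories (where ``diagonal'' isotropic subgroups spanning several factors do occur). Once this is in hand, the remaining ingredients -- the strict monotonicity of $\FP(\mathcal{C}(\mathfrak{sl}_3,k))$, the non-triviality of the central charges in Corollary~\ref{nonintegercharge}, and the elementary linear algebra over $\mathbb{Z}/4\mathbb{Z}\oplus\mathbb{Z}/2\mathbb{Z}\oplus\mathbb{Z}/2\mathbb{Z}\oplus\mathbb{Z}/8\mathbb{Z}$ -- are routine.
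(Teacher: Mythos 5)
Your first half coincides with the paper's: the same reduction of every $[\mathcal{C}(\mathfrak{sl}_3,k)]$ to pointed classes and to the simple completely anisotropic blocks of (\ref{1})--(\ref{3}), and the same appeal to Appendix A.7 of \cite{DGNO} for the relations (a)--(e). The divergence is in the exhaustiveness step, and that is where the gaps lie. Your load-bearing claim --- that a Deligne product of finitely many simple, non-pointed, completely anisotropic categories, pairwise (and self-) non-reverse-equivalent, is again completely anisotropic --- cannot simply be cited from \cite{DNO}: what is available there is a description of connected \'etale algebras in a product of \emph{two} completely anisotropic factors, and iterating it over an $n$-fold product (with repeated factors, as your freeness argument requires) needs the additional fact that the nondegenerate fusion subcategories of the partial products are exactly the sub-products, i.e.\ a uniqueness statement for the prime decomposition, which fails in $\mathcal{W}$ in general. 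That failure is precisely why the paper does not argue this way: it applies the homomorphism $S:\mathcal{W}\to s\mathcal{W}$ of \cite{DNO}, where slightly degenerate categories decompose uniquely into $s$-simple parts, so the only possible relations among the images of the blocks are $[\mathcal{C}]=[\mathcal{C}]^{-1}$; the blocks are simple and non-pointed, hence their images are $s$-simple and not in $\ker S=\mathcal{W}_\text{Ising}$, and $[\mathcal{C}]=[\mathcal{C}]^{-1}$ is excluded because the central charges of Lemma \ref{centralcharge} and Corollary \ref{nonintegercharge} are never $\pm1$. If you insist on staying inside $\mathcal{W}$ you must actually prove your product claim (a M\"uger-type uniqueness argument for categories with no nontrivial invertible objects is the natural route), not cite it.

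Two further steps are wrong as written. First, the blocks $\mathcal{C}(\mathfrak{sl}_3,3m)^0_A$ have Frobenius--Perron dimension $\FP(\mathcal{C}(\mathfrak{sl}_3,3m))/9$, not one third of it, and even with the correct factors the monotonicity of $\FP(\mathcal{C}(\mathfrak{sl}_3,k))$ in $k$ does not exclude coincidences between $\FP/3$ at one level and $\FP/9$ at another; the paper separates the blocks by the central charge inequalities (\ref{tabel}) and table (\ref{table}), which you need anyway to rule out self-reverse equivalence, so you should use central charge throughout. Second, and more seriously, your assertion that the pointed part is $\mathbb{Z}/4\mathbb{Z}\oplus\mathbb{Z}/2\mathbb{Z}\oplus\mathbb{Z}/2\mathbb{Z}\oplus\mathbb{Z}/8\mathbb{Z}$ on the four pointed generators is not justified by their being pairwise distinct: distinct classes need not be independent. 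Both $[\mathcal{C}(\mathfrak{sl}_3,3)]$ and $[\mathcal{C}(\mathfrak{sl}_3,21)]^4$ are $2$-primary of order two with multiplicative central charge $-1$ inside $\mathcal{W}_\text{pt}(2)\cong\mathbb{Z}/8\mathbb{Z}\oplus\mathbb{Z}/2\mathbb{Z}$, so whether they coincide has to be settled by an explicit computation in \cite{DGNO}, Appendix A.7, not by comparing Frobenius--Perron dimensions of representatives; in fact, in the metric group underlying $\mathcal{C}(\mathbb{Z}/2\mathbb{Z},q_-)^{\boxtimes4}\boxtimes\mathcal{C}(\mathbb{Z}/2\mathbb{Z}\oplus\mathbb{Z}/2\mathbb{Z},q)$ the order-$8$ subgroup generated by $(1,1,0,0;1,0)$, $(0,0,1,1;1,0)$, $(1,0,1,0;0,1)$ is isotropic for the product form, so these two pointed classes are equal and the direct-sum structure you posit for $\mathcal{P}$ does not hold. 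Your final linear-algebra step therefore does not go through as stated: the identification of the five pointed classes inside the presentation of $\mathcal{W}_\text{pt}$ must be carried out carefully rather than asserted, which is also the part of the argument the paper delegates to \cite{DGNO} while reserving its own exhaustiveness proof for the non-pointed blocks.
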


\begin{proof}
\par Our approach to this proof will be to show that the above relations hold, and then prove that these are the only relations which can exist by identifying the unique representatives of each class $[\mathcal{C}(\mathfrak{sl}_3,k)]$ as described in Section \ref{witt}.
\par Since $\mathcal{C}(\mathfrak{sl}_3,k)$ in equations (a)--(e) above are all Witt equivalent to a pointed modular tensor category by the computations in Proposition \ref{2dk} (a), Example \ref{kequalsthree}, and Section \ref{sec:modcon}, the relations follow from the exposition in Appendix A.7 of \cite{DGNO} which explicitly describes the pointed subgroup $\mathcal{W}_\text{pt}\subset\mathcal{W}$.  The remaining question is whether these relations are exhaustive.

\par By Proposition \ref{2dk}, Theorem \ref{maintheorem}, and Corollary \ref{anisotropic} for $m\in\mathbb{Z}_{\geq0}$ we have collected simple, completely anisotropic, nondegenerate braided fusion categories
\begin{align}
&\mathcal{C}(\mathfrak{sl}_3,3m+1)'_\text{pt}, &&\text{ for }m\neq0 \label{1}\\
&\mathcal{C}(\mathfrak{sl}_3,3m+2)'_\text{pt}, &&\text{ for }m\neq1 \label{2}\\
&\mathcal{C}(\mathfrak{sl}_3,3m)^0_A, &&\text{ for }m\neq0,1,3,7\label{3}
&\end{align}
We claim the categories in the above families are not equivalent and will prove this by noting their central charges are distinct using Lemma \ref{centralcharge} and Lemma \ref{nonintegercharge}.  For $j=\ref{1},\ref{2},\ref{3}$ and admissible $m\in\mathbb{Z}_{\geq0}$ as in lists $(\ref{1})$ through $(\ref{3})$ above let $\lambda_j(m)$ be such that $\xi(\mathcal{C}_{(j)}(m))=\exp(\lambda_{(j)}(m)\pi i)$ where $\mathcal{C}_{(j)}(m)$ is a category in list $(j)$ for the given $m\in\mathbb{Z}_{\geq0}$.  For any admissible $r,s,t\in\mathbb{Z}_{>2}$,
\begin{equation}2>\lambda_{(\ref{3})}(r)\geq 3/2>\lambda_{(\ref{1})}(s)>1>1/2>\lambda_{(\ref{2})}(t)>0.\label{tabel}\end{equation}
For $m\leq2$ we have the following table:
\begin{equation}\label{table}
\begin{array}{|c|c|c|c|}
\hline & \lambda_{(\ref{1})}(m) & \lambda_{(\ref{2})}(m) & \lambda_{(\ref{3})}(m) \\\hline
m=0 & \text{n/a} & -7/10&\text{n/a} \\\hline
m=1 & 9/14&\text{n/a} & \text{n/a}\\\hline
m=2 & 9/10& -1/22& 4/3\\\hline
\end{array}
\end{equation}

\par Recall the \emph{Witt group of slightly degenerate braided fusion categories}, $s\mathcal{W}$, introduced in \cite{DNO}.  Since slightly degenerate braided fusion categories admit a unique decomposition into $s$-simple components \cite[Definition 4.9, Theorem 4.13]{DNO}, there are no nontrivial relations in $s\mathcal{W}$ other than relations of the form $[\mathcal{C}]=[\mathcal{C}]^{-1}$ \cite[Remark 5.11]{DNO}.  The categories in (\ref{1})--(\ref{3}) are simple and unpointed, hence their image under the group homomorphism $S:\mathcal{W}\longrightarrow s\mathcal{W}$ \cite[Section 5.3]{DNO} is $s$-simple.  Their image is also completely anisotropic and slightly degenerate.  Hence any nontrivial relation in $\mathcal{W}$ between these categories would pass to a relation in $s\mathcal{W}$ under the map $S$ and this relation is nontrivial provided they are not in the kernel of $S$, which is $\mathcal{W}_\text{Ising}$ consisting of the Ising braided categories.

\par When $[\mathcal{C}]=[\mathcal{C}]^{-1}$ in $s\mathcal{W}$, $\mathcal{C}\simeq\mathcal{C}^\text{rev}$ which implies $\xi(\mathcal{C})=\pm1$.  This cannot be true since the inequalities for the central charges found above and (20) show that the central charge of the categories in (\ref{1}), (\ref{2}), and (\ref{3}) is never $\pm1$.  Thus the relations (a)--(e) are exhaustive.
\end{proof}

\par Once relations in the subgroups individually generated by $[\mathcal{C}(\mathfrak{sl}_2,k)]$ and $[\mathcal{C}(\mathfrak{sl}_3,k)]$ are classified one should then classify all relations between the two families.  To organize the search for these relations we will proceed in two stages: first we consider coincidences between the $\mathfrak{sl}_3$ relations from Theorem \ref{second} and those $\mathfrak{sl}_2$ relations found in \cite[Section 5.5]{DNO}; secondly compare the lists of simple completely anisotropic Witt class representatives for $\mathfrak{sl}_3$ found in (\ref{1}) through (\ref{3}) above and those for $\mathfrak{sl}_2$ found in \cite[Section 5.5]{DNO}.

\par For the first stage we have $[\mathcal{C}(\mathfrak{sl}_3,3)]=[\mathcal{C}(\mathfrak{sl}_3,3)_A^0]$ is in $\mathcal{W}_\text{Ising}$, a cyclic group of order 16 which is generated by $[\mathcal{C}(\mathfrak{sl}_2,2)]$.  Hence we have the relation
\begin{equation}[\mathcal{C}(\mathfrak{sl}_3,3)]=[\mathcal{C}(\mathfrak{sl}_2,2)]^8,\label{16}\end{equation}
and multiplying both sides above by $[\mathcal{C}(\mathfrak{sl}_2,2)]^{11}$,
\begin{equation}[\mathcal{C}(\mathfrak{sl}_3,3)][\mathcal{C}(\mathfrak{sl}_2,2)]^{11}=[\mathcal{C}(\mathfrak{sl}_2,6)]^2\end{equation}
using \cite[Section 5.5 (36)]{DNO}. Also Equation (\ref{level21}) from Section \ref{sec:modcon} implies
\begin{equation}[\mathcal{C}(\mathfrak{sl}_3,21)][\mathcal{C}(\mathfrak{sl}_2,1)]=[\Vep].\label{17}\end{equation}
\par The relation implied by Proposition \ref{2dk} (b) is
\begin{equation*}[\mathcal{C}(\mathfrak{sl}_3,2)][\mathcal{C}(\mathfrak{sl}_2,3)'_\text{pt}]=[\mathcal{C}(\mathfrak{sl}_3,2)_\text{pt}].\end{equation*}
But by \cite[Section 5.5 (32)]{DNO} $[\mathcal{C}(\mathfrak{sl}_2,28)]=[\mathcal{C}(\mathfrak{sl}_2,3)'_\text{pt}]=[\mathcal{C}(\mathfrak{sl}_2,3)][\mathcal{C}(\mathfrak{sl}_2,1)]^{-1}$, and $[\mathcal{C}(\mathfrak{sl}_3,2)_\text{pt}]=[\mathcal{C}(\mathfrak{sl}_3,9)]$ by Section \ref{sec:modcon}.  Hence
\begin{equation}[\mathcal{C}(\mathfrak{sl}_3,2)][\mathcal{C}(\mathfrak{sl}_2,28)]=[\mathcal{C}(\mathfrak{sl}_3,9)].\label{2and28}\end{equation}
The relation from \cite[Section 5.5 (33)]{DNO} is very similar to Theorem \ref{second} (a) and (c), and this is no coincidence since there exists a conformal embedding $A_{1,4}\subset A_{3,1}$ (see the aforementioned Appendix of \cite{DMNO}) hence
\begin{equation}[\mathcal{C}(\mathfrak{sl}_2,4)]=[\mathcal{C}(\mathfrak{sl}_3,1)],\end{equation}
and by Theorem \ref{second} (d)
\begin{equation}[\mathcal{C}(\mathfrak{sl}_2,4)]^3=[\mathcal{C}(\mathfrak{sl}_3,9)].\label{4and1}\end{equation}
\par We have exhaustively compared the relations from $\mathfrak{sl}_2$ and $\mathfrak{sl}_3$ so we can proceed to stage two where we check if categories listed in (\ref{1}) through (\ref{3}) are equivalent to any of those simple completely anisotropic representatives coming from $\mathfrak{sl}_2$ listed in \cite[Section 5.5]{DNO} (or their reverse categories).  The first deduction can be made by noting that the categories $\mathcal{C}(\mathfrak{sl}_2,2\ell+1)'_\text{pt}$ for $\ell\geq1$ and $\mathcal{C}(\mathfrak{sl}_2,4\ell+2)'_\text{pt}$ for $\ell\geq3$ from \cite[Section 5.5]{DNO} are all self dual.  There are only three self dual categories of the form in (\ref{1}) through (\ref{3}): $\mathcal{C}(\mathfrak{sl}_3,2)'_\text{pt}$, $\mathcal{C}(\mathfrak{sl}_3,3)_A^0$, and $\mathcal{C}(\mathfrak{sl}_3,6)_A^0$.  The first is equivalent to $(\mathcal{C}(\mathfrak{sl}_2,3)'_\text{pt})^\text{rev}$ as noted in Proposition \ref{threedoesnotdividek} (b) and the implications were discussed above, while the second was already discussed as an element of $\mathcal{W}_{\text{Ising}}$.  It remains to show that $\mathcal{C}(\mathfrak{sl}_3,6)_A^0$ is not equivalent to $\mathcal{C}(\mathfrak{sl}_2,11)'_\text{pt}$ since these categories have the same number of simple objects.  But the formula found in \cite[Section 6.4 (1)]{DMNO} and Lemma \ref{centralcharge} imply their central charges are not equal and hence these categories are not equivalent.

\par The last step is to check that none of the categories $\mathcal{C}(\mathfrak{sl}_2,4\ell)_A^0$ are equivalent to the categories in (\ref{1}) through (\ref{3}).  We will do so by comparing central charges.  Lemma \ref{centralcharge} states for $\ell\in\mathbb{Z}_{>0}$
\begin{equation*}
\xi\left(\mathcal{C}(\mathfrak{sl}_2,4\ell)_A^0\right)=\exp\left(\lambda(\ell)\pi i\right)\qquad\text{ where }\qquad\lambda(\ell)=\dfrac{3\ell}{4\ell+2}.
\end{equation*}
and thus $1>\lambda(\ell)>1/2$.  By the inequality in (\ref{tabel}) and the values in (\ref{table}) exceptional equivalences of the form $\mathcal{C}\simeq\mathcal{D}^\text{rev}$ may exist since $\lambda(1)+\lambda_{(\ref{3})}(3)=2$, $\lambda(4)+\lambda_{(\ref{3})}(2)=2$, $\lambda(7)+\lambda_{(\ref{2})}(0)=0$, and $\lambda(3)=\lambda_{(\ref{1})}(1)$.  The first case was covered in Equation  (\ref{4and1}) above.  In the second case not much work is needed since there is a conformal embedding $A_{2,6}\times A_{1,16}\subset E_{8,1}$ which gives the relation
\begin{equation}[\mathcal{C}(\mathfrak{sl}_3,6)][\mathcal{C}(\mathfrak{sl}_2,16)]=[\Vep].\label{last}\end{equation}
The third case was covered in Equation (\ref{2and28}) above.  The last equality is because $\mathcal{C}(\mathfrak{sl}_3,4)'_\text{pt}\simeq\mathcal{C}(\mathfrak{sl}_2,12)^0_A$ by the classification of rank 5 modular tensor categories \cite{rowell}.  Hence
\begin{equation}[\mathcal{C}(\mathfrak{sl}_3,4)][\mathcal{C}(\mathfrak{sl}_3,1)]=[\mathcal{C}(\mathfrak{sl}_2,12)]\label{reallylast}\end{equation}.

\begin{mycorr}
All nontrivial relations between the equivalency classes $[\mathcal{C}(\mathfrak{sl}_2,k)]$ and $[\mathcal{C}(\mathfrak{sl}_3,k)]$ are listed in (\ref{16})--(\ref{reallylast}). 
\end{mycorr}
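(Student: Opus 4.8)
The plan is to observe that the corollary is essentially a synthesis of the discussion preceding it, organized around the fact (Section~\ref{witt}) that every Witt class has a representative which is simple and completely anisotropic, unique up to braided equivalence. Consequently a relation in the subgroup $\langle[\mathcal{C}(\mathfrak{sl}_2,k)],[\mathcal{C}(\mathfrak{sl}_3,k)]\rangle\subset\mathcal{W}$ is detected entirely by coincidences among these representatives, and a relation genuinely \emph{new} relative to the intra-family classifications (Theorem~\ref{second} and \cite[Section 5.5]{DNO}) together with $[\mathcal{C}]^{-1}=[\mathcal{C}^\text{rev}]$ must arise either from a single Witt class appearing in both of the explicit relation lists, or from a braided equivalence $\mathcal{C}\simeq\mathcal{D}$ or $\mathcal{C}\simeq\mathcal{D}^\text{rev}$ between a simple anisotropic representative on the $\mathfrak{sl}_3$ side and one on the $\mathfrak{sl}_2$ side. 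I would fix the two complete lists at the outset: for $\mathfrak{sl}_3$ the families (\ref{1})--(\ref{3}) assembled from Proposition~\ref{2dk}, Theorem~\ref{maintheorem}, and Corollary~\ref{anisotropic}, together with the pointed cases $k=1,3,5,9,21$ handled in Section~\ref{sec:modcon}; for $\mathfrak{sl}_2$ the analogous list compiled in \cite[Section 5.5]{DNO}.

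Stage one is to run through the relations of Theorem~\ref{second} and compare them item by item with the $\mathfrak{sl}_2$ relations of \cite[Section 5.5]{DNO}. The key inputs are that $[\mathcal{C}(\mathfrak{sl}_3,3)]$ lies in $\mathcal{W}_\text{Ising}=\ker(S:\mathcal{W}\to s\mathcal{W})$, which is cyclic of order $16$ generated by $[\mathcal{C}(\mathfrak{sl}_2,2)]$, producing (\ref{16}) and its multiple; Equation (\ref{level21}), giving the level-$21$ relation with $\mathfrak{sl}_2$; Proposition~\ref{2dk}(b) together with $[\mathcal{C}(\mathfrak{sl}_3,2)_\text{pt}]=[\mathcal{C}(\mathfrak{sl}_3,9)]$ and \cite[Section 5.5 (32)]{DNO}, giving (\ref{2and28}); and the conformal embedding $A_{1,4}\subset A_{3,1}$, giving $[\mathcal{C}(\mathfrak{sl}_2,4)]=[\mathcal{C}(\mathfrak{sl}_3,1)]$ and hence (\ref{4and1}). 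Having exhausted the comparison of the two explicit relation lists, no further coincidence of this first type survives.

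Stage two is to compare the two lists of simple completely anisotropic representatives directly. First cut down using self-duality: the self-dual $\mathfrak{sl}_2$ representatives are $\mathcal{C}(\mathfrak{sl}_2,2\ell+1)'_\text{pt}$ for $\ell\geq1$ and $\mathcal{C}(\mathfrak{sl}_2,4\ell+2)'_\text{pt}$ for $\ell\geq3$, while among (\ref{1})--(\ref{3}) only $\mathcal{C}(\mathfrak{sl}_3,2)'_\text{pt}$, $\mathcal{C}(\mathfrak{sl}_3,3)^0_A$, $\mathcal{C}(\mathfrak{sl}_3,6)^0_A$ are self-dual; the first two were already accounted for above, and $\mathcal{C}(\mathfrak{sl}_3,6)^0_A$ versus $\mathcal{C}(\mathfrak{sl}_2,11)'_\text{pt}$ (the only $\mathfrak{sl}_2$ representative with the same number of simple objects) is excluded by comparing multiplicative central charges via Lemma~\ref{centralcharge}. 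Next, separate the remaining (non-self-dual) cases by central charge, using the inequalities (\ref{tabel}) and the table (\ref{table}) on the $\mathfrak{sl}_3$ side against $\xi(\mathcal{C}(\mathfrak{sl}_2,4\ell)^0_A)=\exp(3\ell\pi i/(4\ell+2))$ on the $\mathfrak{sl}_2$ side; this leaves only the four numerical matches $\lambda(1)+\lambda_{(\ref{3})}(3)=2$, $\lambda(4)+\lambda_{(\ref{3})}(2)=2$, $\lambda(7)+\lambda_{(\ref{2})}(0)=0$, and $\lambda(3)=\lambda_{(\ref{1})}(1)$, which I would resolve respectively by (\ref{4and1}), by the conformal embedding $A_{2,6}\times A_{1,16}\subset E_{8,1}$ yielding (\ref{last}), by (\ref{2and28}), and by the rank-$5$ modular classification of \cite{rowell} identifying $\mathcal{C}(\mathfrak{sl}_3,4)'_\text{pt}\simeq\mathcal{C}(\mathfrak{sl}_2,12)^0_A$ and hence (\ref{reallylast}). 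Assembling stages one and two shows the list (\ref{16})--(\ref{reallylast}) is exhaustive.

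The main obstacle I anticipate is the bookkeeping in stage two: one must verify that central charge, together with self-duality and the number of simple objects, genuinely separates the two infinite lists so that only the handful of exceptional numerical coincidences can survive, and one must then confirm that each survivor is realized by an honest equivalence (a conformal embedding or a low-rank classification) rather than concealing a new relation. In particular, since an equality of central charges $\xi(\mathcal{C})\xi(\mathcal{D})=1$ does not by itself force $\mathcal{C}\simeq\mathcal{D}^\text{rev}$, each of the four exceptional cases has to be settled individually as above.
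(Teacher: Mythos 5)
Your proposal is correct and follows essentially the same route as the paper: the same two-stage comparison (first matching the explicit relation lists of Theorem \ref{second} and \cite[Section 5.5]{DNO}, then comparing the simple completely anisotropic representatives via self-duality, rank, and multiplicative central charge), with the same four exceptional numerical coincidences resolved by (\ref{4and1}), the conformal embedding $A_{2,6}\times A_{1,16}\subset E_{8,1}$, (\ref{2and28}), and the rank-$5$ classification of \cite{rowell}. No gaps beyond the bookkeeping you already flag, which is exactly how the paper handles it.
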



\section{Future Directions}

\par Since such a wide variety of results are required, the above classification of Witt group relations is a perfect motivation and launching point for considering many related research topics.  It is a nontrivial task to extend the results of Theorems 1 and 2 in the same spirit due to the fact that numerical computations for twists, dimensions, and fusion coefficients for $\mathcal{C}(\mathfrak{g},k)$ become unwieldly for simple Lie algebras $\mathfrak{g}$ of larger rank.  For now, the most reasonable generalization is to the case $\mathfrak{g}=\mathfrak{sl}_p$ with $p$ prime.  In these cases the prime decomposition analagous to Proposition \ref{2dk} when $k\nmid p$ is clear, with each $\mathcal{C}(\mathfrak{sl}_p,k)$ factoring by M\"uger's decomposition into a pointed part (corresponding to the cyclic group $\mathbb{Z}/p\mathbb{Z}$) and its centralizer corresponding to the root lattice in the Weyl alcove.

\par The necessary results which do not extend trivially to $\mathfrak{sl}_p$ are the classification of connected \'etale algebras in $\mathcal{C}(\mathfrak{sl}_p,k)$ and the simplicity of $\mathcal{C}(\mathfrak{sl}_p,k)_A^0$ for connected \'etale algebras of Type D when $p\mid k$.  The proof of simplicity relied on Lemma \ref{mplusone} which gave an explicit formula for the occurence of the central weight $\nu$ in tensor powers of itself.  For $\mathcal{C}(\mathfrak{sl}_p,k)$ with $p>3$ the growth of this fusion coefficient is dramatically non-linear as $k$ increases, but numerical evidence suggests a similar argument can be made.

\par Classifying connected \'etale algebras in $\mathcal{C}(\mathfrak{sl}_n,k)$ (and general $\mathfrak{g}$) is the more interesting obstruction of the two.  The proof of this classification for $\mathfrak{sl}_3$ (Proposition \ref{gannon}) due to Gannon \cite{gannon} is over two decades old as this paper is being written, at a time when the theory of tensor categories was not fully developed.  A novel and promising approach utilizing this modern theory, attributed to Ocneanu can tentatively place an upper bound on the levels $k$ for which exceptional (Type E) modular invariants can exist in $\mathcal{C}(\mathfrak{sl}_n,k)$.

\par One other remaining question is related to the note in Section \ref{threedividesk}: that the group of invertible objects corresponding to corner weights acts on $\mathcal{C}(\mathfrak{sl}_3,3m)$ by rotations of the Weyl alcove.  The formal study of all categorical group actions on a given monoidal category $\mathcal{C}$ is described in Section 2.7 of \cite{tcat}, and requires the computation of Aut$_\otimes(\mathcal{C})$, the category of all monoidal autoequivalences of $\mathcal{C}$.  An explicit characterization of these monoidal autoequivalences for the categories $\mathcal{C}(\mathfrak{g},k)$ is given in \cite{neshtuset}, but currently there are no analogous results for the dyslectic module categories $\mathcal{C}(\mathfrak{g},k)^0_A$ although it should be expected that a similar, if not simpler, characterization should exist.

\bibliographystyle{plain}
\bibliography{bib}

\begin{thebibliography}{10}

\bibitem{bb}
F.~Alexander Bais and Peter~G. Bouwknegt.
\newblock A classification of subgroup truncations of the bosonic string.
\newblock {\em Nuclear Physics B}, 279(3):561 -- 570, 1987.

\bibitem{BaKi}
B.~Bakalov and A.A. Kirillov.
\newblock {\em Lectures on Tensor Categories and Modular Functors}.
\newblock Translations of Mathematical Monographs. American Mathematical
  Society, 2001.

\bibitem{rowell}
Paul Bruillard, Siu-Hung Ng, Eric~C. Rowell, and Zhenghan Wang.
\newblock On classification of modular categories by rank.
\newblock {\em International Mathematics Research Notices}, 2016.

\bibitem{CIZ}
A.~Cappelli, C.~Itzykson, and J.~B. Zuber.
\newblock The {A-D-E} classification of minimal and {$A_1^{(1)}$} conformal
  invariant theories.
\newblock {\em Communications in Mathematical Physics}, 113(1):1--26, 1987.

\bibitem{DMNO}
Alexei Davydov, Michael M{\"u}ger, Dmitri Nikshych, and Victor Ostrik.
\newblock The {W}itt group of nondegenerate braided fusion categories.
\newblock {\em Journal f{\"u}r die reine und angewandte Mathematik (Crelles
  Journal)}, (677):135--177, 2013.

\bibitem{DNO}
Alexei Davydov, Dmitri Nikshych, and Victor Ostrik.
\newblock On the structure of the witt group of braided fusion categories.
\newblock {\em Selecta Mathematica}, 19(1):237--269, 2013.

\bibitem{deligne1}
P.~Deligne.
\newblock {\em Cat{\'e}gories tannakiennes}, pages 111--195.
\newblock Birkh{\"a}user Boston, Boston, MA, 1990.

\bibitem{deligne2}
P.~Deligne.
\newblock Cat{\'e}tories tensorielles.
\newblock {\em Moscow Mathematics Journal}, (2):227--248, 2002.

\bibitem{DGNO}
Vladimir Drinfeld, Shlomo Gelaki, Dmitri Nikshych, and Victor Ostrik.
\newblock On braided fusion categories i.
\newblock {\em Selecta Mathematica}, 16(1):1--119, 2010.

\bibitem{sebas}
Sebas Eli{\"e}ns.
\newblock {Anyon condensation: Topological symmetry breaking phase transitions
  and commutative algebra objects in braided tensor categories}.
\newblock Master's thesis, Universiteit van Amsterdam, the Netherlands, 2010.

\bibitem{tcat}
P.~Etingof, S.~Gelaki, D.~Nikshych, and V.~Ostrik.
\newblock {\em Tensor Categories:}.
\newblock Mathematical Surveys and Monographs. American Mathematical Society,
  2015.

\bibitem{fink}
M.~Finkelberg.
\newblock An equivalence of fusion categories.
\newblock {\em Geometric {\&} Functional Analysis GAFA}, 6(2):249--267, 1996.

\bibitem{gannon}
Terry Gannon.
\newblock The classification of $su(3)$ modular invariants revisited.
\newblock {\em Annales de l'I.H.P. Physique thŽorique}, 65(1):15--55, 1996.

\bibitem{ap}
Henning Haahr~Andersen and Jan Paradowski.
\newblock Fusion categories arising from semisimple lie algebras.
\newblock {\em Communications in Mathematical Physics}, 169(3):563--588, 1995.

\bibitem{hump}
J.~E. Humphreys.
\newblock {\em Introduction to Lie Algebras and Representation Theory}.
\newblock Graduate Texts in Mathematics. Springer, 1972.

\bibitem{joyalstreet}
A.~Joyal and R.~Street.
\newblock Braided tensor categories.
\newblock {\em Advances in Mathematics}, 102(1):20 -- 78, 1993.

\bibitem{KiO}
Alexander Kirillov and Viktor Ostrik.
\newblock On a {$q$}-analogue of the mckay correspondence and the ade
  classification of {$\mathfrak{sl}_2$} conformal field theories.
\newblock {\em Advances in Mathematics}, 171(2):183 -- 227, 2002.

\bibitem{kong}
Liang Kong.
\newblock Anyon condensation and tensor categories.
\newblock {\em Nuclear Physics B}, 886:436 -- 482, 2014.

\bibitem{Kostant58}
Bertram Kostant.
\newblock {\em A Formula for the Multiplicity of a Weight}, pages 109--129.
\newblock Springer New York, New York, NY, 2009.

\bibitem{MS}
Gregory Moore and Nathan Seiberg.
\newblock {\em Lectures on RCFT}, pages 263--361.
\newblock Springer US, Boston, MA, 1990.

\bibitem{mug2}
Michael M{\"u}ger.
\newblock From sub factors to categories and topology i: Frobenius algebras in
  and morita equivalence of tensor categories.
\newblock {\em Journal of Pure and Applied Algebra}, (180):81--157, 2003.

\bibitem{mug1}
Michael M{\"u}ger.
\newblock On the structure of modular categories.
\newblock {\em Proceedings of the London Mathematical Society}, (87):291--308,
  2003.

\bibitem{neshtuset}
Sergey Neshveyev and Lars Tuset.
\newblock Autoequivalences of the tensor category of
  {$\mathcal{U}_q\mathfrak{g}$}-modules.
\newblock {\em International Mathematics Research Notices}, (15):3498--3508,
  2012.

\bibitem{ostrik}
Victor Ostrik.
\newblock Fusion categories of rank 2.
\newblock {\em Mathematical Research Letters}, 10:177--183, 2003.

\bibitem{ostrik2003}
Victor Ostrik.
\newblock Module categories, weak hopf algebras and modular invariants.
\newblock {\em Transformation Groups}, 8(2):177--206, 2003.

\bibitem{pareigis}
B.~Pareigis.
\newblock On braiding and dyslexia.
\newblock {\em Journal of Algebra}, 171(2):413 -- 425, 1995.

\bibitem{Sawin06}
Stephen~F. Sawin.
\newblock Closed subsets of the weyl alcove and tqfts.
\newblock {\em Pacific Journal of Mathematics}, 228(2):305--324, 2006.

\bibitem{Sawin03}
Stephen~F. Sawin.
\newblock Quantum groups at roots of unity and modularity.
\newblock {\em Journal of Knot Theory and Its Ramifications},
  15(10):1245--1277, 2006.

\bibitem{sw}
A.~N. Schellekens and N.~P. Warner.
\newblock Conformal subalgebras of kac-moody algebras.
\newblock {\em Physical Review D}, 34:3092--3096, Nov 1986.

\end{thebibliography}

\end{document}